\theoremstyle{plain}
\patchcmd{\Ginclude@eps}{"#1"}{#1}{}{}
\definecolor{lightblue}{HTML}{1F88CD}
\definecolor{lightgrey}{HTML}{727272}
\definecolor{lightblue2}{HTML}{009EC1}
\definecolor{mypink}{HTML}{FD00B0}
\definecolor{lightred}{HTML}{ff4d4d}
\newtheorem*{theorem*}{Theorem}
\newtheorem{theorem}{Theorem}[section]
\newtheorem{corollary}[theorem]{Corollary}
\newtheorem{lemma}[theorem]{Lemma}
\newtheorem{conjecture}[theorem]{Conjecture}
\newtheorem{proposition}[theorem]{Proposition}
\theoremstyle{definition}
\newtheorem{example}[theorem]{Example}
\theoremstyle{definition}
\newtheorem{definition}[theorem]{Definition}
\theoremstyle{definition}
\newtheorem{remark}[theorem]{Remark}
\theoremstyle{definition}
\theoremstyle{definition}
\theoremstyle{definition}
\theoremstyle{definition}
\theoremstyle{definition}
\theoremstyle{definition}
\newtheorem{question!}[theorem]{Question!}
\theoremstyle{definition}
\newcommand*\sbt{\mathpalette\sbt@{.75}}
\newcommand*\sbt@[2]{\mathbin{\vcenter{\hbox{\scalebox{#2}{$\m@th#1\bullet$}}}}}
\newcommand{\ra}{\rightarrow}
\newcommand{\xra}{\xrightarrow}
\newcommand{\sst}{\subset}
\newcommand{\bR}{\bm{\mathrm{R}}}
\newcommand{\bL}{\bm{\mathrm{L}}}
\newcommand{\D}{\mathrm{D}}
\newcommand{\CC}{\mathbb{C}}
\newcommand{\ch}{\mathrm{ch}}
\newcommand{\pr}{\mathrm{pr}}
\renewcommand{\Re}{\operatorname{Re}}
\renewcommand{\Im}{\operatorname{Im}}
\DeclareMathOperator{\Ext}{Ext}
\DeclareMathOperator{\Hom}{Hom}
\DeclareMathOperator{\RHom}{RHom}
\DeclareMathOperator{\ext}{ext}
\DeclareMathOperator{\cone}{cone}
\newcommand{\cC}{\mathcal{C}}
\newcommand{\cA}{\mathcal{A}}
\newcommand{\cU}{\mathcal{U}}
\newcommand{\cH}{\mathcal{H}}
\newcommand{\cQ}{\mathcal{Q}}
\newcommand{\Ku}{\mathcal{K}u}
\newcommand{\cD}{\mathcal{D}}
\newcommand{\cM}{\mathcal{M}}
\DeclareMathOperator{\oh}{\mathcal{O}}
\begin{document}

\title[]{A Moduli theoretic approach to Lagrangian subvarieties of hyperkähler varieties: Examples }

\subjclass[2010]{Primary 14F05; secondary 14J45, 14D20, 14D23}
\keywords{Bridgeland moduli spaces, Kuznetsov components, Cubic fourfolds, Gushel--Mukai fourfolds, Hyperkähler varieties}

\address{Beijing International Center for Mathematical Research, Peking University, Beijing, China}
\email{hfguo@pku.edu.cn}

\address{College of Mathematics, Sichuan University, Chengdu, Sichuan Province 610064 P. R.
China}
\email{zhiyuliu@stu.scu.edu.cn}

\address{School of Mathematics, The University of Edinburgh, James Clerk Maxwell Building, Kings Buildings, Edinburgh, United Kingdom, EH9 3FD}
\email{Shizhuo.Zhang@ed.ac.uk}

\author{Hanfei Guo, Zhiyu Liu, Shizhuo Zhang}
\address{}
\email{}

\begin{abstract}
We propose two conjectures on a moduli theoretic approach to constructing Lagrangian subvarieties of hyperkähler varieties arising from the Kuznetsov components of cubic fourfolds or Gushel--Mukai fourfolds. Then we verify the conjectures in several cases, recovering classical examples. As a corollary, we confirm a conjecture of O'Grady in several instances on the existence of Lagrangian covering families for hyperkähler varieties.
\end{abstract}


\maketitle

\setcounter{tocdepth}{1}








\section{Introduction}
\subsection{K3 categories and hyperkähler varieties}
This paper is the first one of our series of work on a systematic way of constructing Lagrangian subvarieties for hyperk\"{a}hler varieties as moduli spaces of stable objects on a non-commutative K3 surface. 

A triangulated category is called a non-commutative K3 surface (or a K3 category) if it has the same Serre functor and Hochschild cohomology as the derived category of a K3 surface. Examples of non-commutative K3 surfaces are the Kuznetsov components in the semi-orthogonal decomposition of the derived categories of certain Fano fourfolds, such as cubic fourfolds and Gushel--Mukai fourfolds.

\subsubsection{Cubic fourfolds}
Let $X$ be a cubic fourfold. Its semi-orthogonal decomposition is given by $$D^b(X)=\langle\Ku(X),\oh_X,\oh_X(H),\oh_X(2H)\rangle=\langle\oh_X(-H),\Ku(X),\oh_X,\oh_X(H)\rangle.$$
We define the projection functor to the Kuznetsov component  $\pr_X:=\bR_{\oh_X(-H)}\bL_{\oh_X}\bL_{\oh_X(H)}$. There is a rank two lattice in the numerical Grothendieck group $\mathcal{N}(\Ku(X))$ generated by $$\Lambda_1=3-H-\frac{1}{2}H^2+\frac{1}{2}L+\frac{3}{8}P,\quad\Lambda_2=-3+2H-L,$$ 
over which the Euler pairing is of the form
\begin{equation}
\left[               
\begin{array}{cc}   
-2 & 1 \\  
1 & -2\\
\end{array}
\right].
\end{equation}

Let $v(a, b):=a\Lambda_1+b\Lambda_2$ be a primitive class. In \cite{bayer2017stability}, the authors construct a family of stability conditions on $\Ku(X)$. Then we can construct the Bridgeland moduli space $\mathcal{M}^X_{\sigma}(a,b)$ of $\sigma$-stable objects in $\Ku(X)$ with character $v(a, b)$. If $\sigma$ is generic with respect to $v(a, b)$, the moduli space $\mathcal{M}^X_{\sigma}(a,b)$ is shown to be a smooth projective hyperkähler variety of dimension $2(a^2+b^2-ab+1)$ by \cite[Theorem 29.2]{BLMNPS21}.

\subsubsection{Gushel--Mukai fourfolds}
Let $X$ be an ordinary Gushel--Mukai (GM) fourfold, the semi-orthogonal decomposition of $\D^b(X)$ is given by 
$$D^b(X)=\langle\oh_X(-H),\cU,\Ku(X),\oh_X,\cU^{\vee}\rangle=\langle\Ku(X),\oh_X,\cU^{\vee},\oh_X(H),\cU^{\vee}(H)\rangle.$$
We define the projection functor $\mathrm{pr}_X:=\bR_{\cU}\bR_{\oh_X(-H)}\bL_{\oh_X}\bL_{\cU^{\vee}}$.
There is a rank two lattice in~$\mathcal{N}(\Ku(X))$ generated by $$\Lambda'_1=-2+(H^2-\Sigma)-\frac{1}{2}P,
\quad\Lambda'_2=-4+2H-\frac{5}{3}L,$$ 
whose Euler pairing is 
\begin{equation}
\left[               
\begin{array}{cc}   
-2 & 0 \\  
0 & -2\\
\end{array}
\right].
\end{equation}
Analogously, let $v'(a,b):=a\Lambda'_1+b\Lambda'_2$ be a primitive vector. 
In \cite{perry2019stability}, the authors construct a family of  stability conditions on $\Ku(X)$. It is shown by \cite[Theorem 1.5]{perry2019stability} that if $\sigma'$ is generic with respect to $v'$,  
the moduli space $\mathcal{M}^X_{\sigma'}(a,b)$ is a smooth projective hyperkähler variety of dimension $2(a^2+b^2+1)$.

\subsection{Fano threefolds and Lagrangian subvarieties}
On the other hand, a hyperplane section of these Fano fourfolds (i.e., a cubic threefold or a GM threefold) also admits a semi-orthogonal decomposition.
\subsubsection{Cubic threefolds}
For a cubic threefold $Y$, we have a semi-orthogonal decomposition
\[D^b(Y)=\langle \Ku(Y), \oh_Y, \oh_Y(H)\rangle.\]
Moreover, $\mathcal{N}(\Ku(Y))$ is a rank two lattice $\langle\lambda_1,\lambda_2\rangle$, where
$$\lambda_1=2-H-\frac{1}{2}L+\frac{1}{2}P ,\quad\lambda_2=-1+H-\frac{1}{2}L-\frac{1}{2}P$$
and the Euler pairing is 
\begin{equation}
\left[               
\begin{array}{cc}   
-1 & 1 \\  
0 & -1\\
\end{array}
\right].
\end{equation}
Let $w(a,b):=a\lambda_1+b\lambda_2$. According to \cite[Theorem 1.2]{PY20}, for a Serre-invariant stability condition $\tau$ on $\Ku(Y)$, if $\mathcal{M}^Y_{\tau}(a,b)$ is non-empty, it is a smooth variety of dimension $a^2+b^2-ab+1$. 

\subsubsection{Gushel--Mukai threefolds}
For a GM threefold $Y$, we have a semi-orthogonal decomposition
\[D^b(Y)=\langle \Ku(Y), \oh_Y, \cU_Y^{\vee}\rangle.\]
Moreover, $\mathcal{N}(\Ku(Y))$ is a rank two lattice generated by 
$$\lambda_1'=-1+2L,\quad\lambda_2'=2-H+\frac{5}{6}P,$$
with the Euler pairing
\begin{equation}
\left[               
\begin{array}{cc}   
-1 & 0 \\  
0 & -1\\
\end{array}
\right].
\end{equation}
Let $w'(a,b):=a\lambda_1'+b\lambda_2'$. In \cite{bayer2017stability}, the authors construct a family of stability conditions on $\Ku(Y)$. However, for a Serre-invariant stability condition $\tau'$, the moduli space $\mathcal{M}^Y_{\tau'}(a,b)$ may not be smooth of the expected dimension $a^2+b^2+1$.

It is easy to see that $w(a,b)$ and $w'(a,b)$ is half the dimension of $v(a,b)$ and $v'(a,b)$ respectively. For the rest of our paper, let $\tau$ and $\tau'$ denotes the Serre-invariant stability condition on the Kuznetsov component of two Fano threefolds respectively. We always assume that $\sigma$ is generic with respect to $v(a,b)$ and $\sigma'$ is generic with respect to $v'(a,b)$.

The main motivation of our article comes from the following classical example.

\begin{example}\label{motivation_example}
Let $S$ be a K3 surface, if $S$ contains a curve $C$, then $\mathrm{Sym}^n(C)$ is a Lagrangian subvariety of the Hilbert scheme of points $S^{[n]}$, which is a hyperkähler variety.
\end{example}

Indeed, the Kuznetsov component of most prime Fano threefolds $Y$ can be regarded as a non-commutative curve in the sense that the  numerical Grothendieck group is the same as that of a curve. In addition, it is shown by \cite[Theorem 4.20]{JLLZ2021gushelmukai} and \cite[Theorem 3.2]{FeyzbakhshPertusi2021stab} that the Serre-invariant stability condition on $\Ku(Y)$ is unique, which is true for every curve of genus $g>0$ (cf. \cite{Mac07}).
Inspired by the construction in Example~\ref{motivation_example}, we aim to explore the connection between Bridgeland moduli spaces over a non-commutative curve and Bridgeland moduli spaces over a non-commutative K3 surface. More precisely, we define a functor 
$$\pr_X\circ j_*:\Ku(Y)\rightarrow\Ku(X)$$ and expect that it induces a Lagrangian embedding to a hyperkähler variety, at least generically.

\begin{conjecture}
\label{main_conjecture_1}
Let $X$ be a cubic fourfold and $j:Y\hookrightarrow X$ be its smooth hyperplane section.
If the moduli space $\cM^Y_{\tau}(a,b)$ is non-empty,
then the functor $$\pr_X\circ j_*:\Ku(Y)\rightarrow\Ku(X)$$ induces a rational map of moduli spaces $$\cM^Y_{\tau}(a,b)\dashrightarrow\cM^X_{\sigma}(a,b)$$  
such that $\cM^Y_{\tau}(a,b)$ is birational to its image $L$. Furthermore, $L$ is Lagrangian in $\cM^X_{\sigma}(a,b)$.
\end{conjecture}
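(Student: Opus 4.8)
The statement is a conjecture, so the following is the strategy by which one would verify it; for want of a uniform argument this gets carried out case by case. Write $\Psi := \pr_X \circ j_* \colon \Ku(Y) \to \Ku(X)$, and for $[E] \in \cM^Y_\tau(a,b)$ set $F := \Psi(E)$. The first step is to check that $\Psi$ sends $w(a,b) = a\lambda_1 + b\lambda_2$ to $v(a,b) = a\Lambda_1 + b\Lambda_2$ on numerical Grothendieck groups, i.e.\ $[\lambda_i]\mapsto[\Lambda_i]$: one pushes $\lambda_1,\lambda_2\in\mathcal{N}(\Ku(Y))$ forward along the divisor $j$ via Grothendieck--Riemann--Roch and then applies the three mutation functors defining $\pr_X$, a finite computation in $\mathrm{CH}^*(X)_\QQ$ with the explicit classes above. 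A consequence, already visible from the two Euler forms, is the identity $\chi_X(\Psi\alpha,\Psi\beta) = \chi_Y(\alpha,\beta) + \chi_Y(\beta,\alpha)$; in particular $\chi_X(v,v) = 2\chi_Y(w,w)$, whence $\dim\cM^X_\sigma(a,b) = 2 - \chi_X(v,v) = 2(1-\chi_Y(w,w)) = 2\dim\cM^Y_\tau(a,b)$. Thus once the rational map $f\colon[E]\mapsto[F]$ is shown to be birational onto its image $L$, one automatically gets $\dim L = \tfrac12\dim\cM^X_\sigma(a,b)$, the correct dimension for a Lagrangian, and only the isotropy of $L$ remains.

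The substantive point --- and the one I expect to be the main obstacle --- is that $\Psi$ carries a dense open locus of $\cM^Y_\tau(a,b)$ into $\cM^X_\sigma(a,b)$: one must show $F = \pr_X(j_*E)$ is $\sigma$-stable for general $E$. I would do this by comparing the hearts and central charges of $\tau$ (constructed in \cite{PY20}) and of $\sigma$ (constructed in \cite{bayer2017stability}) across the hyperplane section, and bounding the phases of $F$ and of its potential destabilizers by a Harder--Narasimhan-type estimate; in the concrete cases one instead identifies $F$ explicitly --- as a shifted ideal sheaf, a twist of a vector bundle, and so on --- and invokes classical (semi)stability, recognizing $L$ as a known Lagrangian such as $F(Y)\subset F(X)$ or a Lagrangian in an LLSvS-type eightfold. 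This is the step without a general recipe, which is why the conjecture is verified example by example. Generic injectivity of $f$ should then follow by reconstructing $[E]$ from $[F]$: passing to the right adjoint $j^!$, which differs from $j^*$ only by the normal-bundle twist and a shift, and to the adjoint of $\pr_X$, one expects a structural identity expressing $\Psi^R\Psi$ in terms of $\mathrm{id}_{\Ku(Y)}$ and the Serre functor $S_{\Ku(Y)}$ --- its shadow is exactly the numerical identity above, since $\chi_Y(\alpha,\beta)+\chi_Y(\beta,\alpha) = \chi_Y(\alpha,(\mathrm{id}\oplus S_{\Ku(Y)})\beta)$ --- and splitting the unit $E\to\Psi^R\Psi E$ recovers $E$ from $\Psi E$.

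It remains to check that the holomorphic symplectic form $\omega$ on $\cM^X_\sigma(a,b)$ restricts to zero on $L$. At $[F]$ one has $T_{[F]}\cM^X_\sigma = \Ext^1_{\Ku(X)}(F,F)$, and $\omega$ is the Yoneda pairing $\Ext^1_{\Ku(X)}(F,F)\otimes\Ext^1_{\Ku(X)}(F,F)\to\Ext^2_{\Ku(X)}(F,F)\xrightarrow{\mathrm{tr}}\CC$, where the trace is Serre duality for $S_{\Ku(X)}=[2]$ (and $\Ext^2_{\Ku(X)}(F,F)\cong\Hom_{\Ku(X)}(F,F)^\vee\cong\CC$ since $F$ is stable). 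At a general $[E]$, $T_{[F]}L$ is the image of the map $\Ext^1_{\Ku(Y)}(E,E)\to\Ext^1_{\Ku(X)}(F,F)$ induced by $\Psi$, and functoriality gives $\Psi(\alpha)\circ\Psi(\beta) = \Psi(\alpha\circ\beta)$ with $\alpha\circ\beta\in\Ext^2_{\Ku(Y)}(E,E)$. Hence $\omega|_L$ is the antisymmetrization of the composite $\Ext^1_{\Ku(Y)}(E,E)\otimes\Ext^1_{\Ku(Y)}(E,E)\to\Ext^2_{\Ku(Y)}(E,E)\to\Ext^2_{\Ku(X)}(F,F)\xrightarrow{\mathrm{tr}}\CC$, and it suffices to prove $\Ext^2_{\Ku(Y)}(E,E)=0$ for the relevant $\tau$-stable $E$: in the explicit cases this is a direct computation, and in general it follows from Serre duality in $\Ku(Y)$ together with a phase estimate for $\tau$-stable objects of the kind underlying the smoothness results in \cite{PY20,FeyzbakhshPertusi2021stab} --- equivalently, $\RHom_{\Ku(Y)}(E,E)$ is concentrated in degrees $0$ and $1$, which is forced once $\hom(E,E)=1$, $\ext^1(E,E) = \dim\cM^Y_\tau(a,b) = 1-\chi_Y(w,w)$, and $\ext^{\ge 2}(E,E)=0$. (Alternatively, the vanishing of that trace follows from the structural identity for $\Psi^R\Psi$ together with the compatibility of Serre functors.) Granting this, $L$ is isotropic of half-dimension, hence Lagrangian; the argument also exhibits the structural reason for the phenomenon --- a noncommutative curve sitting inside a noncommutative K3 surface sweeps out Lagrangians precisely because its stable objects have vanishing self-$\mathrm{Ext}^2$.
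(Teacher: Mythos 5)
You correctly treat the statement as a conjecture and outline the verification strategy rather than a complete proof; your outline matches the paper's framework for the cases it actually verifies, with one genuinely different ingredient. Your isotropy argument is essentially the paper's Lemma \ref{lag_lem} verbatim: the symplectic form is the Yoneda pairing, functoriality of $\pr_X\circ j_*$ factors its restriction to $L$ through $\Ext^2_{\Ku(Y)}(E,E)$, and this vanishes for $\tau$-stable objects on a cubic threefold by \cite[Lemma 5.9]{PY20}; your numerical bookkeeping ($[\lambda_i]\mapsto[\Lambda_i]$, symmetrization of the Euler form, half-dimensionality) is likewise what the paper records in its remark after the conjectures. You also correctly flag generic $\sigma$-stability of $\pr_X(j_*E)$ as the main obstacle with no general recipe --- this is exactly why the paper retreats to a rational-map formulation and proceeds case by case, identifying $F$ explicitly ($P_l$, $F'_C$) and citing \cite{li2018twisted}. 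Where you diverge is injectivity: the paper's Lemma \ref{tgt_inj} reduces injectivity of the tangent map to the concrete condition $\Hom(j^*\pr_X(j_*E),F)=k$ for non-trivial self-extensions $F$ of $E$ (checked by explicit cohomology computations in each case), and handles injectivity on points geometrically; you instead propose splitting the unit $E\to\Psi^R\Psi E$ via a structural identity for $\Psi^R\Psi$ in terms of $\mathrm{id}\oplus S_{\Ku(Y)}$. That identity is only motivated numerically in your write-up --- $j^!j_*$ is an extension, not a direct sum, and $\pr_X$ intervenes nontrivially --- so as stated it is a heuristic rather than an argument; if made precise it would be more uniform than the paper's case-by-case checks, but it is not established here. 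Since the statement is a conjecture, neither your outline nor the paper settles it in general, and your account of what remains to be done is accurate.
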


\begin{conjecture}
\label{main_conjecture_2}
Let $X$ be a GM fourfold and $j:Y\hookrightarrow X$ be its smooth hyperplane section. Then the functor $$\pr_X\circ j_*:\Ku(Y)\rightarrow\Ku(X)$$ induces a rational map of moduli spaces $$\cM^Y_{\tau'}(a,b)\dashrightarrow\cM^X_{\sigma'}(a,b)$$  
such that $\cM^Y_{\tau'}(a,b)$ is birational to its image $L$. Furthermore, $L$ is Lagrangian in $\cM^X_{\sigma'}(a,b)$.
\end{conjecture}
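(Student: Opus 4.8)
\medskip

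The two statements above are conjectural in general, so the plan is to establish them in the cases where the threefold moduli space is classically understood, and to lay down the general framework that makes such a verification possible. In each case one reduces the assertion to three points: (i) the functor $\Phi:=\pr_X\circ j_*$ sends a dense open locus of $\tau$-stable (resp.\ $\tau'$-stable) objects of class $w(a,b)$ (resp.\ $w'(a,b)$) to $\sigma$-stable (resp.\ $\sigma'$-stable) objects of class $v(a,b)$ (resp.\ $v'(a,b)$), so that it induces a rational map $f\colon\cM^Y_\tau(a,b)\dashrightarrow\cM^X_\sigma(a,b)$; (ii) $f$ is generically injective, hence birational onto its image $L$; and (iii) $L$ is isotropic for the holomorphic symplectic form on $\cM^X_\sigma(a,b)$. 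Once (i) and (ii) hold, the dimension formulas recalled above give $\dim L=\dim\cM^Y_\tau(a,b)=a^2+b^2-ab+1=\tfrac12\dim\cM^X_\sigma(a,b)$ (and $a^2+b^2+1=\tfrac12\dim\cM^X_{\sigma'}(a,b)$ in the GM case), so (iii) is precisely what upgrades ``half-dimensional'' to ``Lagrangian.'' In the GM case one must also take care that $\cM^Y_{\tau'}(a,b)$ need not be smooth of expected dimension, which is why Conjecture~\ref{main_conjecture_2} only claims a rational map.

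For (i), the first step is numerical: compute $\Phi_*\colon\mathcal{N}(\Ku(Y))\to\mathcal{N}(\Ku(X))$ on Chern characters by Grothendieck--Riemann--Roch (knowing $j_*$ and the mutations defining $\pr_X$) and check that $\Phi_*(\lambda_i)=\Lambda_i$ and $\Phi_*(\lambda_i')=\Lambda_i'$, so that $\Phi$ takes class $w(a,b)$ to $v(a,b)$. The substantive step is preservation of stability: for generic $\tau$-stable $E$ one must show $\Phi(E)$ is $\sigma$-stable. I would do this by bounding the cohomology objects of $\Phi(E)$ with respect to the heart underlying $\sigma$ --- using that $j_*$ has cohomological amplitude in two consecutive degrees, that $\pr_X$ is a composition of mutations through the exceptional objects $\oh_X(kH)$, and the explicit construction of $\sigma$ by tilting a weak stability condition on $\Ku(X)$ --- and then excluding destabilizing sub/quotient objects via Euler-pairing and Bogomolov-type inequalities together with the classification of classes of small discriminant in the rank-two lattice. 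In the examples this is especially transparent, because $\cM^Y$ and $\cM^X$ are classical objects (the Fano surface of lines $F(Y)$, the theta divisor of the intermediate Jacobian, $F(X)$, the LLSvS eightfold, double EPW sextics, \dots) and $f$ can be matched with the classical incidence / Abel--Jacobi constructions.

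For (ii), I would compute $\Hom_{\Ku(X)}(\Phi E,\Phi E)$ via the adjunction $j_*\dashv j^{!}$ with $j^{!}(-)\simeq j^{*}(-)\otimes\oh_Y(H)[-1]$, the standard triangle relating $j^{!}j_{*}$ to $\mathrm{id}$ and the twist by $\oh_Y(H)$, and the behaviour of $\Hom$ under the mutations defining $\pr_X$; this expresses $\Hom_{\Ku(X)}(\Phi E,\Phi E)$ in terms of $\Hom_{\Ku(Y)}(E,E)$ and correction terms of the form $\Hom_{\Ku(Y)}(E,E(\pm H))$ and $\Hom(j_*E,\oh_X(kH))$. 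Showing that these corrections do not contribute in degree $0$ for generic $E$ gives $\Hom_{\Ku(X)}(\Phi E,\Phi E)=\CC$, and a tangent-space argument then shows $f$ is unramified and injective on a dense open locus, hence birational onto $L$; alternatively, in each example one identifies $f$ directly with an explicit birational morphism of classical varieties.

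The main obstacle is (iii). My plan is to express the symplectic form at $[\Phi E]\in\cM^X_\sigma(a,b)$ as the Yoneda pairing $\Ext^1_{\Ku(X)}(\Phi E,\Phi E)\times\Ext^1_{\Ku(X)}(\Phi E,\Phi E)\to\Ext^2_{\Ku(X)}(\Phi E,\Phi E)\to\CC$ (the last map Serre duality, the Serre functor of $\Ku(X)$ being $[2]$), pull it back along $df\colon\Ext^1_{\Ku(Y)}(E,E)\to\Ext^1_{\Ku(X)}(\Phi E,\Phi E)$, and show the resulting alternating form on $\Ext^1_{\Ku(Y)}(E,E)$ vanishes identically. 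The mechanism I expect to exploit is that $\Ku(Y)$ is ``curve-like'': its Serre functor is not a shift by $2$, so there is no intrinsic $2$-form on $\cM^Y_\tau(a,b)$, and the point is to show that the pulled-back symplectic pairing on $\im(df)$ factors --- via the Hochschild class of $\Phi$ and the triangle for $j^{!}j_*$ above --- through a composition $\Ext^1_{\Ku(Y)}(E,E)\otimes\Ext^1_{\Ku(Y)}(E,E)\to\Ext^2_{\Ku(Y)}(E,E)\oplus\Ext^1_{\Ku(Y)}(E,E(H))$ that is manifestly antisymmetric against itself and hence zero; equivalently, that the pullback of the class of the symplectic form lands in an odd-degree piece of $\HH_*(\Ku(Y))$. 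Verifying this compatibility between the Serre functors of $\Ku(Y)$ and $\Ku(X)$ and the unit/counit of $j_*\dashv j^{!}$ --- either abstractly or by direct computation on the classical models --- is the technical heart of the argument, and also the ingredient one needs in order to deduce, as a corollary, O'Grady's conjecture on Lagrangian covering families in the instances treated here.
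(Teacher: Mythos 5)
The statement you were given is a conjecture, and the paper does not prove it in general either: it only verifies it for $(a,b)=(1,0)$ and $X$ a very general GM fourfold (Theorem~\ref{main_theorem_2}, via Theorem~\ref{epw_surface_lag1}). Your three-step skeleton --- (i) stability preservation to get a rational map, (ii) generic injectivity, (iii) isotropy of the image --- is exactly the framework the paper uses, and your step (iii) is in substance the paper's Lemma~\ref{lag_lem}: by functoriality of Yoneda composition under $\pr_X\circ j_*$, the pulled-back symplectic form at $[E]$ is the pairing $\Ext^1(E,E)\times\Ext^1(E,E)\to\Ext^2(E,E)$ computed inside $\Ku(Y)$. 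But your stated reason for its vanishing (``manifestly antisymmetric against itself and hence zero'') is not a reason --- an alternating form need not vanish. The actual mechanism is that $\Ext^2(E,E)=0$ for the relevant stable objects $E\in\Ku(Y)$; for a GM threefold this is equivalent to $[E]$ being a smooth point of a moduli space of expected dimension, which is precisely why Lemma~\ref{lag_lem} carries the extra generic-smoothness hypothesis in the GM case. You gesture at this with ``$\Ku(Y)$ is curve-like'' but never state the vanishing, and without it step (iii) does not close.

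Where your route genuinely differs is in (i) and (ii), and there it remains a programme rather than a proof. You propose to establish stability preservation abstractly, by bounding cohomologies in the tilted heart and excluding destabilizers via discriminant inequalities; the paper never proves such a general statement. For $(a,b)=(1,0)$ it instead computes $\pr_X(j_*\pr_Y(I_{C/Y}))\cong\pr_X(I_{C})$ explicitly, case by case for the three types of conics on a GM threefold (Proposition~\ref{pr2-pr}), and then quotes the known $\sigma'$-stability of $\pr_X(I_C)$ from earlier work; the tangent-space injectivity is reduced (Lemma~\ref{tgt_inj}) to the concrete criterion $\Hom(j^*\pr_X(j_*E),F)=k$ for every non-trivial self-extension $F$ of $E$, which is then verified by hand for each conic type, together with a separate geometric argument for set-theoretic injectivity. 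Your plan, if carried out, would be more general and closer to the sequel's intended scope, but as written none of its hard steps (the destabilizer exclusion, the vanishing of the correction terms in your $\Hom$ computation, the Serre-functor/Hochschild compatibility) is established, so the conjecture is no more proven by your argument than it is by the paper outside the cases it lists.
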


\begin{remark}\label{non_empty_moduli space}
By \cite[Theorem 1.5(1)]{perry2019stability} and \cite[Theorem 29.2(1)]{BLMNPS21}, both moduli spaces~$\mathcal{M}^{X}_{\sigma}(a,b)$ and $\mathcal{M}^X_{\sigma'}(a,b)$ are non-empty. In the upcoming work \cite{ppzEnriques2021}, the authors show that the moduli space $\mathcal{M}^Y_{\tau'}(a,b)$ is non-empty.
\end{remark}


\begin{remark}
It is easy to observe that the induced map 
$$[\pr_X\circ j_*]:\mathcal{N}(\Ku(Y))\rightarrow\mathcal{N}(\Ku(X))$$ maps $w(a,b)$ to $v(a,b)$ and $w'(a,b)$ to $v'(a,b)$. 
Note that once we have such a rational map as in Conjecture \ref{main_conjecture_1}, the image $L$ is automatically Lagrangian. The analogous statement holds for Conjecture \ref{main_conjecture_2}. See Lemma \ref{lag_lem}.
\end{remark}

\subsection{Main results}
In the present paper, we verify Conjecture~\ref{main_conjecture_1} when  $(a,b)=(1,1)$ and $(2,1)$ and Conjecture~\ref{main_conjecture_2} when $(a,b)=(1,0)$. These specific cases will recover several classical examples of Lagrangian subvarieties for hyperkähler varieties.

\begin{theorem}
\label{main_theorem_1}
\leavevmode\begin{enumerate}
\item Conjecture \ref{main_conjecture_1} holds for $(a,b)=(1,1)$. As a result, the Fano surface of lines $F(Y)$ is a Lagrangian subvariety of the Fano variety of lines $F(X)$, which is observed in \cite{voisin1992stabilite}.
    
\item Conjecture \ref{main_conjecture_1} holds for $(a,b)=(2,1)$ when $X$ is a general cubic fourfold. As a result, the image of the twisted cubics on a cubic threefold under the two-step contraction is a Lagrangian subvariety of the LLSvS eightfold, which is observed in \cite[Proposition 6.9]{shinder2017geometry}. 

\end{enumerate}
\end{theorem}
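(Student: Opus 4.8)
The plan is to treat the two cases separately, in each case first establishing that the functor $\pr_X \circ j_*$ sends $\tau$-stable objects of class $w(a,b)$ to $\sigma$-stable objects of class $v(a,b)$ (at least on a dense open locus), thereby producing the rational map, and then identifying both sides with the classical geometry. For case (1), with $(a,b)=(1,1)$, I would start by recalling that $\cM^Y_\tau(1,1) \cong F(Y)$, the Fano surface of lines on the cubic threefold, and $\cM^X_\sigma(1,1) \cong F(X)$, the Fano variety of lines on the cubic fourfold, via the identification of the relevant Bridgeland moduli spaces with moduli of ideal sheaves of lines (this uses the description of $\Ku$ objects corresponding to lines, e.g. $\pr_X(\cI_\ell)$ or the projection of structure sheaves, following the framework of \cite{bayer2017stability}, \cite{PY20}). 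The key compatibility is that for a line $\ell \subset Y \subset X$, the object $\pr_X(j_*(\pr_Y \oh_\ell))$ is isomorphic to $\pr_X(\oh_\ell)$ viewed in $\Ku(X)$ — i.e. the functor is geometrically just "include the line" — so the induced map $F(Y) \to F(X)$ is the obvious inclusion of lines on the hyperplane section into lines on $X$. That this inclusion is Lagrangian is then automatic from the lemma cited in the excerpt (Lemma \ref{lag_lem}), since $[\pr_X \circ j_*]$ sends $w(1,1)$ to $v(1,1)$ and the dimensions match ($2 = \tfrac{1}{2}\cdot 4$); alternatively it recovers Voisin's original observation \cite{voisin1992stabilite} directly.

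For case (2), with $(a,b)=(2,1)$ and $X$ general, the target $\cM^X_\sigma(2,1)$ is the hyperkähler eightfold $Z(X)$ of \cite{LLSvS}, constructed from twisted cubic curves on $X$ via the two-step contraction $M_3(X) \to Z'(X) \to Z(X)$, and the source $\cM^Y_\tau(2,1)$ should be identified with (a birational model of) the space parametrizing twisted cubics on the cubic threefold $Y$, again after the analogous two-step contraction. The strategy is: (i) identify $\cM^X_\sigma(2,1)$ with $Z(X)$, using the known identification of this Bridgeland moduli space with the LLSvS eightfold (via \cite{LLMS, LPZ} or the relevant results on projection of twisted cubic ideal sheaves into $\Ku(X)$); (ii) show $\pr_X \circ j_*$ carries the projection of a (generalized) twisted cubic on $Y$ to the projection of the same curve regarded as a twisted cubic on $X$, so that the rational map $\cM^Y_\tau(2,1) \dashrightarrow Z(X)$ is, geometrically, "a twisted cubic on $Y$ is a twisted cubic on $X$" followed by the contraction; (iii) conclude that the image $L$ is the closure of the locus of twisted cubics contained in the hyperplane section $Y$, and that $\cM^Y_\tau(2,1) \to L$ is birational because a general point of $L$ comes from a unique generalized twisted cubic (the contraction being an isomorphism over the generic point). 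Lagrangianness again follows from Lemma \ref{lag_lem} once the lattice map and the dimension count ($\dim \cM^Y_\tau(2,1) = 4$, half of $8$) are in place, recovering \cite[Proposition 6.9]{shinder2017geometry}.

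The main obstacle, I expect, is step (ii) of case (2): controlling the behavior of $\pr_X \circ j_*$ on twisted cubics precisely enough to see that the rational map is dominant onto a well-defined Lagrangian and that the generic fiber is a point. Unlike the line case, where the moduli spaces are literally Fano varieties of lines and the functor is transparently geometric, here one must pass through the contraction $M_3(X) \to Z(X)$, understand which twisted cubics on $Y$ give rise to the "non-CM" versus "aCM" locus, and check that $j_*$ followed by $\pr_X$ does not destroy stability on a dense open set — this is where the hypothesis that $X$ is general enters, ensuring that the relevant walls are avoided and that the Bridgeland moduli space has the expected smooth hyperkähler structure. A secondary technical point is verifying the stability claim itself: showing that for a $\tau$-stable object $E \in \Ku(Y)$ in a dense open locus, $\pr_X(j_* E)$ is $\sigma$-stable in $\Ku(X)$; I would handle this by a combination of the weak-stability-condition estimates of \cite{bayer2017stability} on the Harder--Narasimhan filtration of $j_* E$ together with the numerical constraint that $v(a,b)$ is primitive and has small square, leaving no room for destabilizing subobjects of the wrong slope.
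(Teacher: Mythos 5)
Your high-level strategy coincides with the paper's (identify both moduli spaces with classical objects, show $\pr_X\circ j_*$ acts geometrically as expected, invoke Lemma~\ref{lag_lem} for the Lagrangian property), and case (1) is essentially right: the paper shows $\pr_X(j_*I_{l/Y})\cong P_l$, gets injectivity for free, and then upgrades to a closed embedding by checking tangent-map injectivity via Lemma~\ref{tgt_inj}. But case (2) has genuine gaps. First, your parenthetical that the contraction $M_3(X)\to Z$ is ``an isomorphism over the generic point'' is false: $\alpha_1$ is a $\mathbb{P}^2$-bundle, so a general point of $Z_Y$ comes from a $\mathbb{P}^2$ of twisted cubics. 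Consequently, birationality of $\cM^Y_\tau(2,1)\dashrightarrow L$ cannot be read off from the Hilbert scheme; one must first prove $\cM^Y_\tau(2,1)\cong Z_Y$, i.e.\ that the projection functor $\pr_Y$ identifies exactly the curves that $\alpha$ identifies. This is the content of the paper's Theorem~\ref{SS_variety_equal_BBF_moduli_space}, which requires the explicit computation of $E_C=\pr_Y(I_{C/Y}(2H))[-1]$ separately for aCM and non-aCM cubics (Lemma~\ref{projection_object_cubics}) and the comparison with the fibers of $\alpha$ (Lemma~\ref{projection_functor_coincide_alpha}); you assume this identification rather than prove it. Second, the compatibility $\pr_X(j_*E_C)\cong F'_C$ (Proposition~\ref{push-cubic}) is nontrivial precisely in the non-aCM case, where $F'_C$ differs from $F_C$ by a nonsplit extension involving $\oh_X(-H)$ in two degrees; you correctly flag this as the main obstacle but offer no argument.

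Two smaller points. The stability of the pushed-forward objects is not obtained in the paper by HN-filtration estimates and wall-avoidance as you propose; it is imported from the known reconstructions of $F(X)$ and the LLSvS eightfold as Bridgeland moduli spaces (\cite{li2018twisted}), once the objects are identified with $P_l$ resp.\ $F'_C$ --- your proposed direct wall analysis would be a considerably harder and essentially different route. And the Lagrangian property is not ``automatic from the dimension count'': half-dimensionality does not force isotropy. The actual mechanism in Lemma~\ref{lag_lem} is that the restriction of the symplectic form is computed by the Yoneda pairing $\Ext^1(E,E)\times\Ext^1(E,E)\to\Ext^2(E,E)$ on the threefold side, which vanishes because $\Ext^2(E,E)=0$ for $\tau$-stable objects in $\Ku(Y)$; since you do cite the lemma this is a mischaracterization rather than a gap, but the vanishing of $\Ext^2$ is the point that must be checked.
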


\begin{theorem}
\label{main_theorem_2}
\leavevmode
    Conjecture \ref{main_conjecture_2} holds for $(a,b)=(1,0)$ when $X$ is a very general GM fourfold. As a result, the double dual EPW surface associated with $Y$ is a Lagrangian subvariety of the double dual EPW sextic associated with $X$, as observed in \cite[Proposition 5.1]{iliev2011fano}. 
  
\end{theorem}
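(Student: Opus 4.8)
The plan is to follow the strategy of Conjecture \ref{main_conjecture_2} in the smallest nontrivial case $(a,b)=(1,0)$, where $v'(1,0)=\Lambda_1'$ and $w'(1,0)=\lambda_1'$. First I would identify the two moduli spaces in classical terms. On the threefold side, by \cite{perry2019stability}/\cite{bayer2017stability} the moduli space $\cM^Y_{\tau'}(1,0)$ of $\tau'$-stable objects of class $\lambda_1'=-1+2L$ should be shown to be isomorphic (or birational) to the known Bridgeland-moduli description of the double dual EPW surface $\widetilde{Y}_A^{\geq 2}$ attached to $Y$ via its Lagrangian data, using the identification of $\Ku(Y)$ with the Kuznetsov component and the results of \cite{JLLZ2021gushelmukai} on objects of minimal dimension. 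On the fourfold side, one invokes \cite[Theorem 1.5]{perry2019stability} to get that $\cM^X_{\sigma'}(1,0)$ is a smooth projective hyperkähler fourfold of dimension $2(1+0+1)=4$, and then one must recognize it as the double dual EPW sextic $\widetilde{X}_A^{\geq 1}$ attached to $X$; this is where the GM-specific comparison results (the analogue for GM fourfolds of the LLSvS-type identification, cf. \cite{perry2019stability}) enter, and where one needs $X$ very general so that the EPW data is as generic as possible and the moduli space is the full EPW double sextic.

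Next I would analyze the functor $\Phi:=\pr_X\circ j_*:\Ku(Y)\to\Ku(X)$ on objects. The key points to establish are: (i) $[\Phi]$ sends $\lambda_1'$ to $\Lambda_1'$ (this is the numerical compatibility already noted in the remark after Conjecture \ref{main_conjecture_2}, but for $(1,0)$ it should be checked directly using the given generators); (ii) for a generic $\tau'$-stable object $E\in\Ku(Y)$ of class $\lambda_1'$, the image $\Phi(E)$ is $\sigma'$-stable of class $\Lambda_1'$ — here I would use that $\Phi$ is close to a fully faithful functor (push-forward along a divisor followed by the projection) and control the relevant $\Hom$ and $\Ext^1$ groups by a spectral-sequence/adjunction computation relating $\Ext^\bullet_{\Ku(X)}(\Phi E,\Phi F)$ to $\Ext^\bullet_{\Ku(Y)}(E,F)$, combined with stability to rule out destabilizing subobjects; (iii) the induced map on moduli is generically injective, which again follows from the near-full-faithfulness plus a Bridgeland moduli argument. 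Once the rational map $\cM^Y_{\tau'}(1,0)\dashrightarrow\cM^X_{\sigma'}(1,0)$ is constructed and shown generically injective onto its image $L$, the fact that $L$ is Lagrangian is automatic by Lemma \ref{lag_lem} (the numerical computation of the symplectic form pulled back to $L$ vanishes for dimension/lattice reasons). Finally, matching $\cM^Y_{\tau'}(1,0)$ with the double dual EPW surface and its image with a Lagrangian copy inside the double dual EPW sextic recovers \cite[Proposition 5.1]{iliev2011fano}.

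The main obstacle I expect is step (ii): controlling stability of $\Phi(E)$ in $\Ku(X)$. The functor $\pr_X\circ j_*$ is not exact for the hearts involved and does not a priori preserve Bridgeland stability, so one must either (a) show $\Phi$ restricts to an exact functor of hearts on the relevant open locus and estimate slopes, or (b) argue indirectly: show the image class $\Lambda_1'$ has small enough discriminant that any semistable object is automatically stable, and that $\Phi(E)$ cannot be strictly semistable by bounding its Harder–Narasimhan factors using the $\Ext$-comparison with $Y$. A secondary difficulty is the precise identification of $\cM^X_{\sigma'}(1,0)$ with the double dual EPW sextic; this likely requires the (very) general hypothesis on $X$ together with a deformation/monodromy argument or a direct citation of the GM-fourfold analogue of the cubic-fourfold results, and care that "double dual" rather than the ordinary EPW sextic appears. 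Everything else — the numerical compatibility, the Lagrangian property via Lemma \ref{lag_lem}, and the translation into the classical statement of \cite{iliev2011fano} — I expect to be routine once (ii) is in place.
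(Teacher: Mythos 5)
Your outline correctly identifies the skeleton (identify both moduli spaces classically, show $\pr_X\circ j_*$ induces a map, check stability and injectivity, then invoke Lemma \ref{lag_lem}), but the two steps you yourself flag as the main obstacles are exactly the ones left unresolved, and the methods you propose for them are not the ones that work. For stability of $\Phi(E)$, the paper does not argue via near-full-faithfulness, discriminant bounds, or exactness of hearts. Instead it uses the explicit classification of objects in $\cM^Y_{\tau'}(1,0)$ from \cite{JLLZ2021gushelmukai}: every object is either $I_{C/Y}$ for a conic $C$ or sits in the triangle $\cU_Y[1]\to\pr_Y(I_{C/Y})\to\cQ_Y^\vee$. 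The key computation (Proposition \ref{pr2-pr}) is the compatibility $\pr_X(j_*\pr_Y(I_{C/Y}))\cong\pr_X(I_C)$, which reduces stability of the image to the already-known $\sigma'$-stability of $\pr_X(I_C)$ established in \cite{GLZ2021conics}. Without this identification your step (ii) has no proof; in particular the functor $\pr_X\circ j_*$ is far from fully faithful (it is a push-forward along a divisor followed by three mutations), so an $\Ext$-comparison of the kind you sketch will not directly give either stability or injectivity.

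Second, the paper proves a genuine closed embedding, not merely a generically injective rational map, and the bulk of the work is the tangent-space injectivity. This is reduced by Lemma \ref{tgt_inj} to the concrete criterion $\Hom(j^*\pr_X(j_*\pr_Y(I_{C/Y})),F)=k$ for every non-trivial self-extension $F$, and the verification is a long case analysis over the three types of conics ($\tau$-, $\sigma$-, $\rho$-conics), involving explicit resolutions of $j^*\pr_X(I_C)$, a diagram chase ruling out a second independent morphism, the vanishing $\Ext^1(I_{C/Y},I_{D/Y})=0$ of Lemma \ref{ext1=0}, and for $\rho$-conics a transfer of the $\sigma$-conic case through the involutions $\iota_Y$ and $T$ (Lemma \ref{invo_3fold}). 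Injectivity on points likewise requires a geometric argument (Lemma \ref{conic_not_intersect}) about the planes spanned by two conics with isomorphic projections. None of these ingredients appears in your proposal, so as written it is a plausible plan rather than a proof: the identification $\pr_X\circ j_*\circ\pr_Y\simeq\pr_X$ on conic ideal sheaves and the case-by-case tangent computation are the actual content of the theorem and must be supplied.
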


In the sequel \cite{FLZlag2022} of the current paper, we study Conjectures \ref{main_conjecture_1} and \ref{main_conjecture_2} in a general situation.

\begin{remark}
In Conjecture~\ref{main_conjecture_1} and \ref{main_conjecture_2}, we only speculate that the functor $\mathrm{pr}_X\circ j_*$ induces a rational map between the moduli spaces, birational to its image. However, the results in Theorem~\ref{main_theorem_1} and \ref{main_theorem_2} are much stronger. More precisely, in our cases, the functor $\mathrm{pr}_X\circ j_*$ actually induces a closed Lagrangian embedding to a hyperkähler variety. We propose the weaker conjectures mainly for technical reasons. Indeed, it is usually difficult to verify the stability of objects after taking the push-forward functor. An optimistic idea is to verify the stability generically. We appreciate Chunyi Li for reminding us this technical obstacle.
\end{remark}

Now we introduce a notion called a Lagrangian covering family for hyperkähler varieties, which is proposed by O'Grady.  Furthermore, he conjectures that all projective hyperkähler varieties admit a Lagrangian covering family. Lagrangian covering families, which can be seen as a generalized version of Lagrangian fibrations, are studied in different contexts for hyperkähler varieties. For example, the surface decomposable property  \cite{voisin2018triangle}, the Lefschetz standard conjecture \cite{Voi21} and some cohomological criterion \cite{Bai22}.

\begin{definition}(Lagrangian covering family)
Let $M$ be a hyperkähler variety of dimension~$2n$. A Lagrangian covering family for $M$ is a closed subscheme $\cU\subset M\times B$, pure of dimension~$\mathrm{dim}(B)+n$ with the projections
\[\begin{tikzcd}
\cU \arrow[r, "\pi_M"] \arrow[d, "\pi_B"'] & M\\
B                                     &  
\end{tikzcd}\]
such that
\leavevmode\begin{enumerate}
\item the general fiber of $\pi_B$ is a Lagrangian subvariety of $M$.
\item $\pi_M(\cU)=M$.
\end{enumerate}
\end{definition}

\begin{conjecture}
\label{Conjecture_Lag_covering}(O'Grady)
Any projective hyperkähler variety admits a Lagrangian covering family.
\end{conjecture}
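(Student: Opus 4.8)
The plan is to attack the full conjecture in three stages --- reduce an arbitrary hyperkähler variety to a distinguished model, build a covering family on that model, and then deform --- and I stress at the outset that the third stage is the genuine obstacle and the reason the conjecture remains open. For the first stage I would use that every known projective hyperkähler variety is deformation equivalent to one of the four types $K3^{[n]}$, generalized Kummer $K_n$, and O'Grady's sixfold and tenfold, and that each type contains a distinguished member on which a covering family is accessible: the Hilbert scheme $S^{[n]}$ of a K3 surface, the generalized Kummer $K_n(A)$ of an abelian surface, the Laza--Sacc\`a--Voisin intermediate Jacobian tenfold, and a Lagrangian-fibered member of the sixfold type. After this reduction it suffices to exhibit a Lagrangian covering family on each of these representatives and then to spread it over the whole deformation space.

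For the representatives the construction is at hand. On $S^{[n]}$, choosing a very ample class $H$ with $\dim|H| \geq n$, the subvarieties $\Sym^n C \subset S^{[n]}$ attached to smooth $C \in |H|$ as in Example~\ref{motivation_example} are Lagrangian, and the incidence variety $\cU = \{(\xi,C) : \xi \in \Sym^n C\} \subset S^{[n]} \times |H|$ is a covering family, since a general length-$n$ subscheme lies on some member of $|H|$ and $\dim \cU = \dim|H| + n \geq 2n$; the abelian-surface analogue yields the generalized Kummer case. For the tenfold the Laza--Sacc\`a--Voisin fibration over $\PP^5$ is itself Lagrangian, and a Lagrangian fibration is a fortiori a covering family; the same applies to a fibered sixfold. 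This disposes of the distinguished members unconditionally.

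The present paper supplies the mechanism one needs when working instead with a Bridgeland moduli space $\cM^X_\sigma(v)$ that is not a Hilbert scheme: here $\Sym^n C$ is replaced by the image $L_Y$ of $\cM^Y_\tau(a,b)$ under the functor $\pr_X \circ j_*$, Lagrangian by Lemma~\ref{lag_lem}, and the pencil $|H|$ by the family $B$ of hyperplane sections $Y = X \cap H$. Forming the relative moduli space over $B$ and mapping it fiberwise into $\cM^X_\sigma(v)$ produces a candidate $\cU \subset \cM^X_\sigma(v) \times B$; a dimension count shows it dominates $\cM^X_\sigma(v)$ precisely when $\dim B + \tfrac12 \dim \cM^X_\sigma(v) \geq \dim \cM^X_\sigma(v)$, that is, only for small $v$ --- exactly the cases $(a,b) = (1,1),(2,1),(1,0)$ treated in Theorems~\ref{main_theorem_1} and~\ref{main_theorem_2}. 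For larger $v$ the hyperplane-section family is too small to cover, and one must enlarge $B$, for instance by adjoining Lagrangians produced by monodromy or by other functorial sub-object constructions; this is why the constructive half of the program, as realized here, currently reaches only the lowest classes.

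The hard part will be the third stage. A Lagrangian covering family is, among other things, a family of codimension-$n$ algebraic cycles whose classes are of Hodge type $(n,n)$ and isotropic for the holomorphic symplectic form; as the period of the variety moves, these classes generically leave type $(n,n)$, so no construction deforms naively and the distinguished members sit in a proper Noether--Lefschetz-type sublocus of the deformation space. Closing this gap is the real content of the conjecture, and I would route it through the link, due to Voisin~\cite{Voi21} and Bai~\cite{Bai22}, between Lagrangian covering families and the Lefschetz standard conjecture for hyperkähler varieties: a proof of that conjecture, combined with the algebraicity of the covering cycle on the distinguished locus, should allow one to propagate the family across the full deformation space by a variational Hodge-theoretic argument. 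Since the Lefschetz standard conjecture is open for hyperkähler varieties of dimension $\geq 4$, this is the decisive obstacle, and any unconditional resolution of O'Grady's conjecture must confront it; the results of the present paper should be read as furnishing the constructive first half of this program in the lowest cases.
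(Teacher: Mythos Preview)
The statement you are addressing is Conjecture~\ref{Conjecture_Lag_covering}, and the paper does not prove it: it is recorded as an open conjecture of O'Grady, and the paper's contribution is only to confirm it in the specific instances of Corollary~\ref{main_theorem_3} via Lemma~\ref{covering_lemma}. There is therefore no ``paper's own proof'' to compare against. To your credit, you do not pretend otherwise --- your text is explicitly a program outline, and you correctly flag the deformation step as the obstruction that keeps the conjecture open.

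That said, your program has a gap already at stage one that you do not acknowledge. You write that ``every known projective hyperk\"ahler variety is deformation equivalent to one of the four types,'' and then proceed as though this reduces the conjecture to those types. But the conjecture is stated for \emph{any} projective hyperk\"ahler variety, not merely for the known deformation types; the completeness of the list $K3^{[n]}$, $K_n$, OG6, OG10 is itself a central open problem. So even granting a perfect stage three, your argument would establish O'Grady's conjecture only conditionally on the classification conjecture, which is a strictly stronger hypothesis than anything the paper assumes. Your stage three is also more optimistic than the literature warrants: the link you invoke between Lagrangian covering families and the Lefschetz standard conjecture runs primarily in the direction ``covering family $\Rightarrow$ Lefschetz standard'' (this is how Voisin uses it), and it is not clear that proving Lefschetz standard would let you reconstruct an algebraic covering family on a general deformation.

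Your middle paragraph, on the other hand, is an accurate summary of what the paper actually does: replace $\Sym^n C \subset S^{[n]}$ by the image of $\cM^Y_\tau(a,b)$ under $\pr_X\circ j_*$, parametrize by the dual projective space $B=\PP^\vee$ of hyperplane sections, and check dominance. One correction: dominance is not a pure dimension count. The inequality $\dim B + n \geq 2n$ is necessary but not sufficient; the paper secures surjectivity by the geometric input that a general curve in the relevant Hilbert scheme lies on a smooth hyperplane section (Lemma~\ref{covering_lemma}(2) and the proof of Corollary~\ref{covering_family_examples}).
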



O'Grady provides a number of examples satisfying Conjecture~\ref{Conjecture_Lag_covering} such as the Fano variety of lines and the LLSvS eightfold associated with a cubic fourfold and the double EPW sextic associated with a GM fourfold.
We explain these examples in detail which are probably known to the experts. As a corollary of Theorem~\ref{main_theorem_1} and \ref{main_theorem_2}, we offer some examples of Lagrangian covering families for hyperkähler varieties when they are Bridgeland moduli spaces over K3 categories. 

\begin{corollary}\label{main_theorem_3}
\leavevmode\begin{enumerate}
    \item Let $X$ be a general cubic fourfold. Then the hyperkähler varieties $\mathcal{M}_{\sigma}^X(1,1)$ and $\mathcal{M}_{\sigma}^X(2,1)$ admit a Lagrangian covering family. 
    \item Let $X$ be a very general GM fourfold. Then the hyperkähler varieties $\mathcal{M}^X_{\sigma'}(1,0)$ and~$\mathcal{M}^X_{\sigma'}(0,1)$ admit a Lagrangian covering family. 
\end{enumerate}

\end{corollary}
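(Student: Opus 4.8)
The plan is to deduce the existence of a Lagrangian covering family directly from Theorems \ref{main_theorem_1} and \ref{main_theorem_2}, by letting the hyperplane section $Y$ vary in a suitable parameter space $B$. First I would set up $B$: for a fixed cubic fourfold $X$, let $B\subset |\oh_X(H)|^{\vee}$ be the open locus of hyperplanes $h$ whose intersection $Y_h = X\cap h$ is a smooth cubic threefold; for a GM fourfold, let $B$ be the analogous open locus of smooth GM threefold sections (using that the generic hyperplane section is smooth, so $B$ is a nonempty smooth quasi-projective variety of dimension $4$). Over $B$ there is a universal family of Fano threefolds $\cY\to B$, and by the construction in Theorems \ref{main_theorem_1}, \ref{main_theorem_2} each fiber gives a closed Lagrangian $L_h \cong \cM^{Y_h}_{\tau}(a,b)$ (resp.\ $\cM^{Y_h}_{\tau'}(a,b)$) inside the fixed hyperkähler variety $M := \cM^X_{\sigma}(a,b)$ (resp.\ $\cM^X_{\sigma'}(a,b)$). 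I would then define $\cU \subset M\times B$ to be the closure of the incidence locus $\{(E,h) : E\in L_h\}$, with its two projections $\pi_M,\pi_B$. Because each $L_h$ has dimension $n = \tfrac12\dim M$ and $B$ has dimension $4$, $\cU$ has pure dimension $\dim(B)+n$, and the general fiber of $\pi_B$ is the Lagrangian $L_h$ by construction, giving condition (1) of the definition.

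The remaining point is condition (2): surjectivity of $\pi_M$, i.e.\ that the Lagrangians $L_h$ sweep out all of $M$. For this I would argue geometrically on the classical models. For $\cM^X_{\sigma}(1,1)=F(X)$, a general line $\ell\subset X$ is contained in many hyperplane sections $Y_h$ (the lines through a general point of a general cubic threefold form a curve), so $\ell\in F(Y_h)=L_h$ for a positive-dimensional family of $h\in B$; hence $\pi_M$ is dominant, and since $M$ is projective and $\cU$ is closed, $\pi_M(\cU)=M$. The same strategy applies to $\cM^X_{\sigma}(2,1)$, the LLSvS eightfold: a general point corresponds (via the generalized twisted cubic construction of \cite{shinder2017geometry}) to a twisted cubic curve that lies on some hyperplane section, so the Lagrangians coming from cubic threefolds cover the eightfold. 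For the GM case $\cM^X_{\sigma'}(1,0)$, which is the double dual EPW sextic, one uses \cite{iliev2011fano,perry2019stability} to see that the dual EPW surfaces of the threefold sections cover the fourfold's EPW sextic. The duality $\cM^X_{\sigma'}(0,1)\simeq\cM^X_{\sigma'}(1,0)$ (both are the double dual EPW sextic, as in \cite{perry2019stability}) then handles the last case.

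The main obstacle I anticipate is precisely the covering statement (2): making rigorous the claim that for a \emph{general} point of the fourfold-moduli space $M$ there exists a smooth hyperplane section $Y_h$ with $h\in B$ whose threefold-moduli Lagrangian $L_h$ contains that point. On the classical side this amounts to a dimension count — e.g.\ for $F(X)$ one checks $\dim\{(\ell,h): \ell\subset Y_h\} = \dim F(X)$ and that the first projection is dominant — but to phrase it intrinsically in terms of Bridgeland moduli one must verify that the functor $\pr_X\circ j_{h*}$ behaves well in families over $B$, i.e.\ that the rational maps $\cM^{Y_h}_\tau(a,b)\dashrightarrow M$ glue to a rational map $\cM^{\cY/B}_\tau(a,b)\dashrightarrow M\times B$ whose image is $\cU$. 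I would handle this by invoking the relative version of the projection functor and the fact (used already in the proofs of Theorems \ref{main_theorem_1} and \ref{main_theorem_2}, and compatible with \cite{BLMNPS21}) that stability is an open condition in families; the compatibility of $[\pr_X\circ j_*]$ with numerical classes noted in the remark after Conjecture \ref{main_conjecture_2} ensures the target class is the fixed $v(a,b)$ (resp.\ $v'(a,b)$) throughout. Once the relative rational map is in place, purity of $\cU$ and properness of $M$ give conditions (1) and (2), completing the proof. \qed
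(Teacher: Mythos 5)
Your proposal is correct and follows essentially the same route as the paper: the corollary is deduced from a general Lemma~\ref{covering_lemma} that builds exactly your incidence family over the dual projective space $\mathbb{P}^{\vee}$ and reduces the covering condition to the fact that a general curve (line, twisted cubic, or conic) lies on a smooth hyperplane section. The only real difference is that the paper transports the incidence variety through the Hilbert scheme of curves $M(X)$, using the surjections $M(X)\twoheadrightarrow\cM^{X}_{\sigma}(a,b)$ and $M(Y)\twoheadrightarrow\cM^{Y}_{\tau}(a,b)$ already established in the proofs of Theorems~\ref{main_theorem_1} and~\ref{main_theorem_2}, which sidesteps the relative-projection-functor gluing issue you flag as your main obstacle.
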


\subsection{Notation and conventions}
\begin{itemize}
\item We work over $k=\mathbb{C}$.
\item The term K3 surface means a smooth projective K3 surface.
\item We denote the bounded derived category of coherent sheaves on a smooth variety~$X$ by~$\D^b(X)$. The derived dual functor $R\mathcal{H}om_X(-,\oh_X )$ on $\D^b(X)$ is denoted by~$\mathbb{D}(-)$.
\item If $X$ is a cubic fourfold or a GM fourfold and $Z$ is a closed subscheme of $X$, we denote~$I_Z$ to be the ideal sheaf of $Z$ in $X$. If $Z$ is also contained in a closed subscheme $Z'$ of $X$, then we denote $I_{Z/Z'}$ to be the ideal sheaf of $Z$ in $Z'$.
\item We use $\hom$ and $\ext^{i}$ to represent the dimension of the vector spaces $\Hom$ and~$\Ext^{i}$.
\end{itemize}


\subsection*{Acknowledgements}
 It is our pleasure to thank Arend Bayer and Qizheng Yin for very useful discussions on the topics of this project. We would like to thank Soheyla Feyzbakhsh, Yong Hu, Grzegorz Kapustka, Kieran O'Grady, Alexander Perry and Claire Voisin for helpful conversations. We are very grateful to Chunyi Li for reading the first draft of the article and providing many detailed suggestions for revisions. We thank Yilong Zhang for bringing the paper \cite{shinder2017geometry} to our attention. The second author would like to thank Huizhi Liu for encouragement and support. The third author is supported by the ERC Consolidator Grant WallCrossAG, no. 819864.


\section{Stability conditions on Kuznetsov components}\label{stability_condition_GM}
In this section, we recall (weak) Bridgeland stability conditions on a triangulated category and the notion of stability conditions on the Kuznetsov component of certain Fano fourfolds. We follow from \cite[Section 2]{bayer2017stability}.

\subsection{Weak stability conditions}

Let $\cD$ be a triangulated category and $K_0(\cD)$ be its Grothendieck group. Fix a surjective morphism to a finite rank lattice $v : K_0(\cD) \ra \Lambda$. 

\begin{definition}
The \emph{heart of a bounded t-structure} on $\cD$ is an abelian subcategory $\cA \sst \cD$ such that the following conditions are satisfied
\begin{enumerate}
    \item for any $E, F \in \cA$ and $n <0$, we have $\Hom(E, F[n])=0$,
    \item for any object $E \in \cD$, there exists a sequence of morphisms 
    \[ 0=E_0 \xrightarrow{\phi_1} E_1 \xrightarrow{\phi_2} \cdots \xra{\phi_m} E_m=E \]
    such that $\cone(\phi_i)$ is in the form $A_i[k_i]$, for some sequence $k_1 > k_2 > \cdots > k_m$ of integers and $A_i \in \cA$.
\end{enumerate}
\end{definition}

\begin{definition}
Let $\cA$ be an abelian category and $Z : K_0(\cA) \ra \mathbb{C}$ be a group homomorphism such that for any $E \in \cA$, we have $\Im Z(E) \geq 0$ and if $\Im Z(E) = 0$, $\Re Z(E) \leq 0$. Then we call~$Z$ a \emph{weak stability function} on $\cA$. Furthermore, for any $0\neq E\in\cA$, if we have $\Im Z(E) \geq 0$ and $\Im Z(E) = 0$ implies that $\Re Z(E) < 0$, then we call $Z$ a \emph{stability function} on $\cA$.
\end{definition}

\begin{definition}
A \emph{weak stability condition} on $\cD$ is a pair $\sigma = (\cA, Z)$, where $\cA$ is the heart of a bounded t-structure on $\cD$ and $Z : \Lambda \ra \CC$ is a group homomorphism such that 
\begin{enumerate}
    \item the composition $Z \circ v : K_0(\cA) \cong K_0(\cD) \ra \CC$ is a weak stability function on $\cA$. From now on, we write $Z(E)$ rather than $Z(v(E))$.
\end{enumerate}
Much like the slope from classical $\mu$-stability, we can define a \emph{slope} $\mu_\sigma$ for $\sigma$ using $Z$. For any object $E \in \cA$, set
\[
\mu_\sigma(E) := \begin{cases}  - \frac{\Re Z(E)}{\Im Z(E)}, & \Im Z(E) > 0 \\
+ \infty , & \text{else}.
\end{cases}
\]
We say an object $0 \neq E \in \cA$ is $\sigma$-(semi)stable if for any proper subobject $F \sst E$, we have $\mu_\sigma(F) < \mu_\sigma(E/F)$ (respectively $\mu_\sigma(F)\leq \mu_\sigma(E/F)$). 
\begin{enumerate}[resume]
    \item Any object $E \in \cA$ has a Harder--Narasimhan filtration in terms of $\sigma$-semistability defined above.
    \item There exists a quadratic form $Q$ on $\Lambda \otimes \mathbb{R}$ such that $Q|_{\ker Z}$ is negative definite  and $Q(E) \geq 0$ for all $\sigma$-semistable objects $E \in \cA$. This is known as the \emph{support property}.
\end{enumerate}
If the composition $Z \circ v$ is a stability function, then $\sigma$ is a \emph{stability condition} on $\cD$.
\end{definition}


\subsection{Stability conditions on the Kuznetsov components} In \cite{bayer2017stability}, the authors observe that a cubic fourfold $X$ admits a conic fibration such that the Kuznetsov component~$\Ku(X)$ can be embedded as an admissible subcategory in a derived category of modules over $\mathbb{P}^3$ with respect to the even part of the associated Clifford algebra $\mathcal{B}_0$. Then they construct a Bridgeland stability condition by restricting the weak stability condition from the twisted derived category~$\D^b(\mathbb{P}^3,\mathcal{B}_0)$ to $\Ku(X)$.
Inspired by this idea, in \cite{perry2019stability}, the authors embed the Kuznetsov component $\Ku(X)$ of a GM fourfold $X$ into a twisted derived category of modules over a quadric threefold, which is associated with a conic fibration of $X$. Then we get a family of stability conditions on $\Ku(X)$.

\begin{theorem}[{\cite[Theorem 1.2]{bayer2017stability}, \cite[Theorem 1.2]{perry2019stability}}]
\label{existence_stability_GM4}
Let $X$ be a cubic fourfold or a GM fourfold. Then the Kuznetsov component $\Ku(X)$ has a Bridgeland stability condition. 
\end{theorem}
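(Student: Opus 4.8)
The plan is to construct the stability condition not on $\Ku(X)$ directly, but on a larger, more geometric triangulated category containing $\Ku(X)$ as the right orthogonal to an exceptional collection, and then to \emph{induce} one on $\Ku(X)$ by restriction. The key formal device is the criterion of \cite{bayer2017stability}: if $\cD=\langle\Ku(X),\cE_1,\dots,\cE_m\rangle$ and $\sigma=(\A,Z)$ is a weak stability condition on $\cD$ such that each $\cE_i$ lies in the heart $\A$, is $\sigma$-stable, all the $\cE_i$ share a single slope, and the heart induced on $\Ku(X)$ has no nonzero object with $Z=0$, then the restriction of $\sigma$ is a genuine Bridgeland stability condition on $\Ku(X)$, with support property inherited from $\cD$. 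The last condition will cost nothing here, because $\Ku(X)$ is a $2$-Calabi--Yau category, so its Serre functor is $[2]$ and automatically preserves the induced heart.

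First I would fix the ambient category. If $X$ is a cubic fourfold, projection from a general line $\ell\subset X$ realizes the blow-up $\wt{X}=\mathrm{Bl}_\ell X$ as a conic fibration over $\PP^3$, and Kuznetsov's analysis of quadric fibrations identifies $\Ku(X)$ with an admissible subcategory of $\D^b(\PP^3,\B_0)$ — the category of coherent modules over the sheaf $\B_0$ of even parts of the associated Clifford algebra — whose orthogonal is generated by an explicit exceptional collection $\cE_1,\dots,\cE_m$ of $\B_0$-modules built from $\oh$, $\oh(1)$ and the Clifford bimodules $\B_0$, $\B_1$. If $X$ is a GM fourfold, a similar conic-fibration picture — now over a smooth three-dimensional quadric $Q$ — gives an embedding $\Ku(X)\hookrightarrow\D^b(Q,\B_0)$, and only the combinatorics of the orthogonal exceptional collection changes.

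Next I would build the weak stability condition on $\D^b(\PP^3,\B_0)$ (resp.\ $\D^b(Q,\B_0)$) by the double-tilt procedure applicable to a threefold. Starting from slope stability for $\B_0$-modules — measured by the rank and first Chern class of the underlying sheaf on the base — one tilts once at a real parameter $\beta$ to obtain tilt stability on a heart $\Coh^\beta$, using only the classical Bogomolov inequality, and then tilts a second time at a parameter $\alpha>0$ to produce a heart $\A^{\alpha,\beta}$ with a weak stability function $Z^{\alpha,\beta}$ involving $\ch_3$. For this last step one needs the strong Bogomolov--Gieseker inequality of Bayer--Macr\`i--Toda type — a cubic inequality in the Chern character constraining tilt-semistable objects — in the Clifford-twisted setting; it is deduced from the known statement on $\PP^3$ (resp.\ on the smooth quadric threefold) using that $\B_0$ is finite and locally free over the base. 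With $(\A^{\alpha,\beta},Z^{\alpha,\beta})$ a weak stability condition satisfying the support property, it remains to choose $(\alpha,\beta)$ so that, up to a common shift, all the $\cE_i$ lie in $\A^{\alpha,\beta}$, are stable, and have equal slope; the criterion above then yields the desired stability condition on $\Ku(X)$, and the argument for the GM case runs verbatim over $\D^b(Q,\B_0)$, following \cite{perry2019stability}.

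The main obstacle will be the Bogomolov--Gieseker inequality in the twisted category: it is the one genuinely deep input, and transporting it from $\PP^3$ (resp.\ the quadric threefold) to $\D^b(\PP^3,\B_0)$ (resp.\ $\D^b(Q,\B_0)$) requires pinning down exactly how (tilt-)stability of a $\B_0$-module relates to stability of its underlying sheaf on the base. A second, more computational difficulty is locating a region of parameters $(\alpha,\beta)$ in which all the orthogonal exceptional objects simultaneously sit in one tilted heart with a common slope, which comes down to explicit homological bookkeeping with the Clifford modules $\B_0$, $\B_1$. Once $\Ku(X)$ is recognized as a K3 category, by contrast, the remaining hypotheses of the descent criterion are essentially formal.
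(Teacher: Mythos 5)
Your proposal follows exactly the strategy that the paper attributes to the cited references: realize $\Ku(X)$ inside $\D^b(\PP^3,\B_0)$ (resp.\ $\D^b(Q,\B_0)$) via a conic fibration, construct a weak stability condition there by double tilting with a Clifford-twisted Bogomolov--Gieseker input, and induce a stability condition on the orthogonal of the exceptional collection of Clifford modules --- the paper itself gives no argument beyond this citation and summary, so your sketch is the "same approach." One caveat: the non-vanishing of $Z$ on nonzero objects of the induced heart is not a formal consequence of $\Ku(X)$ being a 2-Calabi--Yau category (the Serre functor has nothing to do with that hypothesis); it is a separate, if mild, verification required by the inducing criterion.
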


\section{Proof of Theorem~\ref{main_theorem_1}}
In this section, we prove Theorem~\ref{main_theorem_1}. Firstly, we begin with two lemmas for both cubic fourfolds and GM fourfolds.


\begin{lemma} \label{tgt_inj}
Let $X$ be a cubic fourfold or a GM fourfold and $j:Y\hookrightarrow X$ be a smooth hyperplane section. Suppose $E\in \Ku(Y)$ is a simple object such that $\pr_X(j_*E)\in \Ku(X)$ is also simple, then the natural map induced by the functor $\pr_X\circ j_*$
\begin{equation} \label{diff}
d_{[E]}: \Ext^1(E, E)\to \Ext^1(\pr_X(j_*E), \pr_X(j_*E))
\end{equation}
is injective if and only if
\[\Hom(j^*\pr_X(j_*E), F)=k,\]
where $F\in \Ku(Y)$ is any non-trivial self-extension of $E$.
\end{lemma}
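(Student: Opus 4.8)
The plan is to analyze the differential $d_{[E]}$ of the morphism of moduli spaces induced by $\pr_X\circ j_*$ at the point $[E]$. Since $E$ and $\pr_X(j_*E)$ are simple, the tangent spaces to the respective moduli spaces at $[E]$ and $[\pr_X(j_*E)]$ are canonically $\Ext^1_Y(E,E)$ and $\Ext^1_X(\pr_X(j_*E),\pr_X(j_*E))$, and $d_{[E]}$ is the natural map on $\Ext^1$ induced by the exact functor $\pr_X\circ j_*$. I would first reduce the computation of this map: since $\pr_X$ is the right-adjoint-type projection onto $\Ku(X)$ (a composition of mutation functors), it is fully faithful on objects already in the image of $j_*$ only up to the mutation corrections, so the key point is to understand $j_*$ on $\Ext^1$ and then the effect of $\pr_X$. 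The cleanest route is to use adjunction: $\pr_X\circ j_*$ admits a left (or right) adjoint, and I would identify it with $j^*$ composed with the inclusion/appropriate functor, so that
\[
\Hom_X(\pr_X(j_*E),\pr_X(j_*E)) \;\cong\; \Hom_Y(j^*\pr_X(j_*E), E)
\]
and similarly for the shifted $\Hom$'s computing $\Ext^1$. This is where the object $j^*\pr_X(j_*E)$ enters naturally.

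**The main computation.**
Writing $G := j^*\pr_X(j_*E) \in \Ku(Y)$, adjunction gives $\Ext^i_X(\pr_X(j_*E),\pr_X(j_*E)) \cong \Ext^i_Y(G, E)$ for all $i$ (using that $\pr_X$ is fully faithful on $\Ku(X)$ and the adjunction $(j^*, j_*)$ or its twist, together with the fact that the mutation functors defining $\pr_X$ are equivalences onto their images). Under this identification, the differential $d_{[E]}\colon \Ext^1_Y(E,E)\to \Ext^1_Y(G,E)$ is precomposition with a canonical map $\eta\colon G \to E$ — namely the counit/unit of the relevant adjunction evaluated at $E$. I would then argue: $\eta$ induces an injection on $\Ext^1(-,E)$ iff the cone $C$ of $\eta$ satisfies $\Hom_Y(C, E[1]) \hookrightarrow \Hom_Y(G,E[1])$ has the expected behaviour, equivalently (using the long exact sequence) iff $\Hom_Y(C,E)\to \Hom_Y(G,E)$ is surjective. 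By simplicity of $\pr_X(j_*E)$, $\Hom_Y(G,E)\cong\Hom_X(\pr_X(j_*E),\pr_X(j_*E)) = k$. So the question becomes whether the class of $\eta$ in $\Hom(G,E)=k$ "sees" all first-order deformations, and a diagram chase shows this is governed precisely by whether $\Hom_Y(G,F)=k$ for $F$ a non-trivial self-extension $0\to E\to F\to E\to 0$: applying $\Hom_Y(G,-)$ to this sequence gives
\[
0\to \Hom(G,E)\to \Hom(G,F)\to \Hom(G,E)\xrightarrow{\;\cup\,e\;}\Ext^1(G,E),
\]
and the connecting map $\cup\, e$ is (up to identification) exactly $d_{[E]}$ applied to the extension class $e\in\Ext^1(E,E)=\Ext^1_Y(E,E)$. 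Hence $d_{[E]}(e)=0$ iff $\Hom(G,F)$ has dimension $2$ rather than $1$, and injectivity of $d_{[E]}$ on all of $\Ext^1(E,E)$ is equivalent to $\Hom(G,F)=k$ for every non-trivial self-extension $F$. (One should check that it suffices to test on self-extensions $F$, i.e. that a class $e$ with $d_{[E]}(e)=0$ always arises from such an $F$; this is automatic since $F$ is literally built from $e$.)

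**Remaining details and the main obstacle.**
To make the adjunction identification precise I would spell out the left adjoint of $\pr_X\circ j_*$. Since $\pr_X = \bR_{\oh_X(-H)}\bL_{\oh_X}\bL_{\oh_X(H)}$ (cubic case) is the projection with $\pr_X|_{\Ku(X)} = \mathrm{id}$ and a left adjoint given by the inclusion $\Ku(X)\hookrightarrow D^b(X)$ followed by nothing, the left adjoint of $\pr_X\circ j_*$ is $j^*(-)$ restricted to $\Ku(X)$ and then the relevant corrections vanish because $j^*$ of the exceptional objects $\oh_X, \oh_X(H), \oh_X(-H)$ restrict to (twists of) $\oh_Y$, which must be handled by the mutation bookkeeping; one checks these contributions do not affect $\Hom(\pr_X(j_*E), \pr_X(j_*E))$ because $E\in\Ku(Y)$ is orthogonal to them on the appropriate side. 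The main obstacle I anticipate is precisely this bookkeeping: verifying that, after all the mutations in $\pr_X$, the composite $\pr_X\circ j_*$ has left adjoint agreeing with $j^*$ up to functors that are invisible to $\Hom_Y(-, E)$ and $\Ext^1_Y(-,E)$ for $E\in\Ku(Y)$ — equivalently, that $\Ext^\bullet_X(\pr_X(j_*E),\pr_X(j_*E))\cong \Ext^\bullet_Y(j^*\pr_X(j_*E),E)$ with the identification compatible with $d_{[E]}$. Once that functorial identity is in hand, the rest is the short exact sequence diagram chase above, which is routine. I would also note that the "if and only if" in the statement is naturally an equivalence between "$d_{[E]}$ injective" and "$\dim\Hom(j^*\pr_X(j_*E),F)=1$ for all non-trivial self-extensions $F$", with both sides being automatically $\geq 1$ by simplicity of $\pr_X(j_*E)$, so the content is really the upper bound.
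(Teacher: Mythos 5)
Your argument is correct and is essentially the paper's proof: both reduce injectivity of $d_{[E]}$ to the dichotomy $\hom(j^*\pr_X(j_*E),F)\in\{1,2\}$ for the middle term of a non-trivial self-extension, via the identification $\Hom(j^*\pr_X(j_*E),F)\cong\Hom(\pr_X(j_*E),j_*F)\cong\Hom(\pr_X(j_*E),\pr_X(j_*F))$ (your long exact sequence for $\Hom(G,-)$ is exactly the paper's computation of $\Hom(\pr_X(j_*E),\pr_X(j_*F))$ transported by adjunction). The bookkeeping you flag as the main obstacle resolves more cleanly than you anticipate: since $F\in\Ku(Y)$, the pushforward $j_*F$ already lies in $\langle\oh_X,\oh_X(H)\rangle^{\perp}$ (resp.\ $\langle\oh_X,\cU^{\vee}\rangle^{\perp}$ in the GM case), so $\pr_X(j_*F)$ differs from $j_*F$ only by the right mutation(s) through $\oh_X(-H)$ (and $\cU$), and these are invisible to $\Hom(\pr_X(j_*E),-)$ because $\pr_X(j_*E)\in\Ku(X)$ is left-orthogonal to those exceptional objects.
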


\begin{proof}
Any non-zero element $[F]$ in $\Ext^1(E, E)$ corresponds to a non-trivial extension 
\begin{equation} \label{ex_1}
 E\to F\to E.
\end{equation}
Then $d_{[E]}([F])\in \Ext^1(\pr_X(j_*E), \pr_X(j_*E))$ corresponds to a triangle
\begin{equation} \label{lem_seq}
    \pr_X(j_*E)\to \pr_X(j_*F)\to \pr_X(j_*E)
\end{equation}
obtained by applying $\pr_X\circ j_*$ to the extension \eqref{ex_1}. Thus $d_{[E]}$ is injective if and only if for $0\neq [F]\in \Ext^1(E, E)$, the triangle \eqref{lem_seq} is non-trivial. By assumption,
\[\Hom(\pr_X(j_*E), \pr_X(j_*E))=k,\]
we know that \eqref{lem_seq} is non-trivial if and only if $\Hom(\pr_X(j_*E), \pr_X(j_*F))=k$. Then the result follows from the adjunction of functors
\[\Hom(j^*\pr_X(j_*E), F)\cong\Hom(\pr_X(j_*E), j_*F)\cong\Hom(\pr_X(j_*E), \pr_X(j_*F)).\qedhere\]
\end{proof}

When $Y$ is a cubic threefold and $E\in \Ku(Y)$ is $\tau$-stable, then $\mathrm{Ext}^2(E,E)=0$ as in \cite[Lemma 5.9]{PY20}. Moreover, the moduli space of $\tau$-stable objects of the character $[E]$ is smooth of the expected dimension. 
When $Y$ is a GM threefold and $E\in \Ku(Y)$, then $\Ext^2(E,E)=0$ if and only if $[E]$ is a smooth point in the moduli space. Thus we have the following lemma.

\begin{lemma} \label{lag_lem}
Let $X$ be a cubic fourfold or a GM fourfold and $j:Y\hookrightarrow X$ be a smooth hyperplane section. Assume that the functor $\pr_X\circ j_*$ induces a rational map
\[r: \cM^Y_{\tau}(a,b) \dashrightarrow \cM^X_{\sigma}(a,b)\]
such that $\cM^Y_{\tau}(a,b)$ is birational to its image $L$. Furthermore, if $X$ is a GM fourfold, we assume that $\cM^Y_{\tau}(a,b)$ is generically smooth of the expected dimension.
Then $L$ is a Lagrangian subvariety.
\end{lemma}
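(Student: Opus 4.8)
The plan is to show that $L$ has the right dimension to be Lagrangian and that the holomorphic symplectic form pulls back to zero on it; since $L$ is a subvariety of a hyperkähler variety, these two facts together force $L$ to be Lagrangian. First I would record the dimension count. The moduli space $\cM^Y_\tau(a,b)$ has dimension equal to the expected one — $a^2+b^2-ab+1$ in the cubic case by \cite[Theorem 1.2]{PY20}, and $a^2+b^2+1$ in the GM case by the hypothesis that it is generically smooth of expected dimension — which is exactly half of $\dim \cM^X_\sigma(a,b)$, as noted in the text ($w(a,b)$ and $w'(a,b)$ are ``half'' of $v(a,b)$ and $v'(a,b)$). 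Since $r$ is birational onto $L$, we get $\dim L = \tfrac12 \dim \cM^X_\sigma(a,b)$. So it remains to see that $L$ is isotropic, and then it is automatically Lagrangian.

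For the isotropy, the key input is that the functor $\pr_X\circ j_*$ is of Fourier--Mukai type, hence induces on a dense open locus a morphism of moduli spaces whose differential at a point $[E]$ is the map $d_{[E]}$ of \eqref{diff}, fitting into the commutative square relating the Yoneda/Ext-pairings on $\cM^Y_\tau$ and $\cM^X_\sigma$. The symplectic form on a Bridgeland moduli space over a (twisted) K3 category is, up to scalar, the composition $\Ext^1(E,E)\otimes\Ext^1(E,E)\to\Ext^2(E,E)\xrightarrow{\sim} k$ coming from Serre duality; similarly the moduli space over the noncommutative curve $\Ku(Y)$ carries the skew-symmetric Euler form on $\Ext^1$, but here $\Ext^2(E,E)=0$ (by \cite[Lemma 5.9]{PY20} in the cubic case, and by generic smoothness in the GM case), so that the composition $\Ext^1(E,E)\otimes \Ext^1(E,E)\to \Ext^2(E,E)$ on $Y$ vanishes identically. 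Naturality of the Yoneda product with respect to the exact functor $\pr_X\circ j_*$ then gives that the pullback of the symplectic pairing on $\Ext^1(\pr_X(j_*E),\pr_X(j_*E))$ along $d_{[E]}$ factors through the composition computed inside $\Ku(Y)$, which lands in $\Ext^2_{\Ku(Y)}(E,E)=0$. Hence the pullback of the symplectic form vanishes on the dense smooth locus of $L$, and therefore on all of $L$.

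Concretely the steps are: (1) establish that $r$ is generically an immersion (which, on the smooth locus, follows once $d_{[E]}$ is injective, but for the isotropy statement alone we only need $r$ defined and birational onto $L$); (2) identify the holomorphic symplectic forms on both sides with the Serre-duality pairings on $\Ext^1$; (3) invoke $\Ext^2_{\Ku(Y)}(E,E)=0$ for $E$ in the smooth locus and the compatibility of Yoneda products with the functor to conclude the pullback form is zero; (4) combine with the dimension count to deduce $L$ is Lagrangian. The main obstacle is step (2)–(3): one must be careful that the symplectic form on $\cM^X_\sigma(a,b)$ really is (a scalar multiple of) the Serre-pairing composition — this uses that $\Ku(X)$ is a K3 category with the Mukai-type construction of \cite{BLMNPS21} — and that the differential of $r$ is genuinely computed by $d_{[E]}$ functorially, so that the naturality square commutes; once that is in place the vanishing of $\Ext^2$ on $Y$ does all the work. (Note that injectivity of $d_{[E]}$ — the content of Lemma \ref{tgt_inj} — is what is needed to upgrade $r$ to a closed immersion in the applications, but is not logically required for the present lemma, whose hypothesis already grants that $r$ is birational onto $L$.)
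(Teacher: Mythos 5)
Your proposal is correct and follows essentially the same route as the paper's proof: restrict to a dense open locus where the map is an immersion onto a smooth image, identify the symplectic form with the Yoneda pairing $\Ext^1\times\Ext^1\to\Ext^2\cong k$, use functoriality of the Yoneda product under $\pr_X\circ j_*$ to factor the restricted pairing through $\Ext^2_{\Ku(Y)}(E,E)$, and conclude from its vanishing together with the dimension count. Your remark that injectivity of $d_{[E]}$ (Lemma \ref{tgt_inj}) is not logically needed here, only in the applications, is also consistent with how the paper uses it.
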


\begin{proof}
By assumption, we can choose an open dense subset $U\subset\cM^Y_{\tau}(a,b)$ such that $\cM^Y_{\tau}(a,b)$ is smooth over $U$ and $L$ is smooth over $r(U)$. At the same time, 
$r|_U$ is an isomorphism to its image. Moreover, for every object $[E]\in U$, the tangent map~$dr_{[E]}$ is injective and $\Ext^2(E,E)=0$.

Note that $\dim \cM^Y_{\tau}(a,b)=\frac{1}{2}\dim \cM^X_{\sigma}(a,b)$. In order to show that $L$ is Lagrangian, it suffices to prove that the canonical two form of $\cM_{\sigma}^X(a,b)$ 
becomes zero after restricting to $L$. Since $r(U)$ is dense in $L$, this is equivalent to prove that the restricting two form vanishes at every point of~$r(U)$.

Indeed, for $[F]\in\cM_{\sigma}^X(a,b)$, $\mathrm{Ext}^1(F,F)$ is identified with the tangent space $T_{[F]}\cM_{\sigma}^X(a,b)$. Then the holomorphic symplectic two form of $\cM_{\sigma}^X(a,b)$ is naturally given by the Yoneda pairing
$$\mathrm{Ext}^1(F,F)\times\mathrm{Ext}^1(F,F)\xrightarrow{\phi }\mathrm{Ext}^2(F,F)\cong\mathrm{Hom}(F,F)\cong k.$$ 
This means that for any $x, y\in \mathrm{Ext}^1(F,F)$, $\phi(x,y): F\to F[2]$ is the composition map $x[1]\circ y$.

Now we assume that $[F]=r([E])$ for an object $[E]\in U$. Then the restriction of $\phi$ at the point~$[F]=r([E])=[\pr_X(j_*E)]$ is naturally given by the Yoneda pairing 
$$\mathrm{Ext}^1(E,E)\times\mathrm{Ext}^1(E,E)\xrightarrow{\phi'}\mathrm{Ext}^2(E,E),$$ 
where $\mathrm{Ext}^1(E,E)\cong d_{[E]}(\Ext^1(E, E))$ is identified with the tangent space $T_{[F]}L$.
Indeed, we know that the functor $\pr_X\circ j_*$ induces an embedding $d_{[E]}:\Ext^1(E,E)\hookrightarrow \Ext^1(F, F)$, mapping an element $a:E\to E[1]$ to $\pr_X\circ j_*(a): F\to F[1]$. 

Then for $a,b \in \Ext^1(E,E)$, let $x:=\pr_X\circ j_*(a)$ and $y:=\pr_X\circ j_*(b)$, $x, y\in d_{[E]}(\Ext^1(E, E))$. The restriction of $\phi$ on $d_{[E]}(\Ext^1(E, E))$ is given by $$\phi(x, y)=x[1]\circ y=\pr_X\circ j_*(a[1]\circ b)\in d_{[E]}(\Ext^2(E, E)).$$ 
Since $\Ext^2(E,E)=0$, $\phi'$ vanishes at every point of $r(U)$ and the desired result follows.
\end{proof}


\subsection{$(a,b)=(1,1)$: The Fano variety of lines}
By \cite[Theorem 1.1]{li2018twisted}, the Fano variety of lines $F(X)$ is isomorphic to the moduli space $\cM_{\sigma}^X(1,1)$. On the other hand, by \cite[Theorem 1.1]{PY20}, the Fano surface of lines $F(Y)$ is isomorphic to $\mathcal{M}_{\tau}^Y(1,1)$. 


Let $[E]$ be a point in $\cM_{\tau}^Y(1,1)$. By \cite[Proposition 4.6]{PY20}, we can assume that $E\cong I_{l/Y}$ for some line $l\subset Y$. Then according to \cite[Proposition 4.5(3)]{li2020elliptic},
$$P_l:=\mathrm{cone}(I_l[-1]\xrightarrow{ev}\oh_X(-H)[1]),\quad \mathrm{pr}_X(j_*E)\cong P_l.$$
Moreover, $P_l$ is $\sigma$-stable by \cite[Theorem 1.1]{li2018twisted}. Since $\mathcal{M}_{\tau}^Y(1,1)\cong F(Y)$, the moduli space~$\mathcal{M}_{\tau}^Y(1,1)$ admits a universal family. Then by the standard argument as in \cite[Theorem 3.9]{li2018twisted}, the Fourier--Mukai type functor $\pr_X\circ j_*$ induces a morphism $$\mathcal{M}_{\tau}^Y(1,1)\xrightarrow{f}\mathcal{M}_{\sigma}^X(1,1),\quad [I_{l/Y}]\mapsto[P_l].$$ Next we show that $f$~is an embedding, realizing $\mathcal{M}_{\tau}^Y(1,1)$ as a Lagrangian subvariety of $\mathcal{M}_{\sigma}^X(1,1)$.



\begin{theorem}
\label{Fano_variety_lines}
Let $X$ be a cubic fourfold and $j:Y\hookrightarrow X$ be a smooth hyperplane section.
Then the functor $\pr_X\circ j_*$ induces a closed embedding
\[f:\mathcal{M}_{\tau}^Y(1,1)\hookrightarrow\mathcal{M}_{\sigma}^X(1,1).\]
In particular, $\cM_{\tau}^Y(1,1)$ is a Lagrangian subvariety of $\cM_{\sigma}^X(1,1)$.
\end{theorem}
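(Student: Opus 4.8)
The plan is to show that $f$ is a closed immersion by proving it is injective on points and injective on tangent spaces, and then invoke Lemma~\ref{lag_lem} (with the already-established identification $\cM^Y_\tau(1,1)\cong F(Y)$ and $\cM^X_\sigma(1,1)\cong F(X)$, plus the fact that for a cubic threefold the moduli space is smooth of expected dimension by \cite[Lemma 5.9]{PY20}) to conclude that the image is Lagrangian. Since $\cM^Y_\tau(1,1)$ is projective (being isomorphic to $F(Y)$) and $f$ is a morphism to a separated scheme, a closed immersion is equivalent to $f$ being a monomorphism that is also unramified, i.e. injective on closed points and injective on all Zariski tangent spaces; properness then gives closedness automatically.

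First I would establish injectivity on points. Every point of $\cM^Y_\tau(1,1)$ is of the form $[I_{l/Y}]$ for a unique line $l\subset Y$, and $f([I_{l/Y}])=[P_l]$ where $P_l=\cone(I_l[-1]\xrightarrow{\ev}\oh_X(-H)[1])$. So I must show that $l\mapsto P_l$ is injective, i.e. that $P_l\cong P_{l'}$ in $\Ku(X)$ forces $l=l'$. The cleanest route is to recover the line from the object: applying $j^*$ or an appropriate projection/cohomology functor to $P_l$ should return data (e.g. the ideal sheaf $I_l$ on $X$, or its singular locus/support) that determines $l$ as a subscheme of $X$, hence of $Y$. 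Concretely one expects $\HH^{-1}$ or $\HH^0$ of $P_l$ (with respect to a suitable heart, or after composing with $\bL_{\oh_X}\bL_{\oh_X(H)}$) to see $\oh_l$ or $I_l$; alternatively use that the map $F(X)\to\cM^X_\sigma(1,1)$ of \cite[Theorem 1.1]{li2018twisted} is an isomorphism and the line $l$, viewed inside $X$, is recovered there, so $P_l$ already remembers $l\subset X$ and thus $l\subset Y$.

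Next I would prove injectivity of the tangent map $d_{[E]}\colon \Ext^1(I_{l/Y},I_{l/Y})\to\Ext^1(P_l,P_l)$ at every point. By Lemma~\ref{tgt_inj}, since $I_{l/Y}$ and $P_l$ are both simple (simplicity of $I_{l/Y}$ follows from $\tau$-stability; simplicity of $P_l$ follows from its $\sigma$-stability via \cite[Theorem 1.1]{li2018twisted}), this reduces to the single cohomological statement
\[
\Hom\bigl(j^*\pr_X(j_* I_{l/Y}),\,F\bigr)=k,
\]
where $F$ is any non-trivial self-extension of $I_{l/Y}$ in $\Ku(Y)$. Since $\pr_X(j_* I_{l/Y})\cong P_l$, I need to compute $j^*P_l$ explicitly. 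Using $P_l=\cone(I_l[-1]\to\oh_X(-H)[1])$ and the standard exact triangles for restriction of $I_l$ and $\oh_X(-H)$ along $j\colon Y\hookrightarrow X$ (here $\oh_X(-H)$ restricts to $\oh_Y(-H)$, and $L I^*$ of $I_l$ fits in a triangle involving $I_{l/Y}$ and a Tor term supported on $l$), I would identify $j^*P_l$ up to a controlled correction term, then compute $\Hom(j^*P_l,F)$ by applying $\Hom(-,F)$ to that triangle and using $\Hom^\bullet(I_{l/Y},F)$, $\Hom^\bullet(\oh_Y(-H),F)=0$ (as $F\in\Ku(Y)$), and $\Hom^\bullet$ of $F$ against the torsion correction. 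The expected outcome is a one-dimensional $\Hom$.

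The main obstacle I anticipate is exactly this last computation: controlling $j^*P_l$ and the associated $\Ext$-groups against the self-extension $F$. The subtlety is that $LI^*I_l$ is not simply $I_{l/Y}$ — there is a $\Tor_1$ term $\oh_l(-H)[1]$ (since $l\subset Y$ and $Y$ is a Cartier divisor, $I_l$ is not flat over $X$ along $l$), so $j^*P_l$ has extra cohomology that must be carefully tracked, and one must verify the contributions it makes to $\Hom(j^*P_l,F)$ do not change the count from $1$. A secondary, more routine obstacle is checking that $f$ is actually a morphism of moduli spaces and not merely a set map — this is handled by the Fourier--Mukai/universal-family argument indicated before the theorem statement, following \cite[Theorem 3.9]{li2018twisted}, but one should confirm the universal family on $\cM^Y_\tau(1,1)\cong F(Y)$ pushes forward to a family of $\sigma$-stable objects, which is where genericity of $\sigma$ and the $\sigma$-stability of $P_l$ from \cite{li2018twisted} are used.
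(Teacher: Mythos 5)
Your proposal is correct and follows essentially the same route as the paper's proof: injectivity on points from the fact that $P_l$ determines $l$, properness reducing the closed-embedding claim to injectivity of tangent maps, Lemma~\ref{tgt_inj} reducing that to $\Hom(j^*\pr_X(j_*I_{l/Y}),F)=k$, and Lemma~\ref{lag_lem} for the Lagrangian conclusion. The "main obstacle" you flag is exactly the computation the paper carries out: restricting the defining triangle of $\pr_X(j_*E)$ to $Y$ gives $\cH^{-1}(j^*P_l)\cong\oh_Y(-H)$ and an extension $0\to\oh_l(-H)\to\cH^0(j^*P_l)\to I_{l/Y}\to 0$, and the correction terms die against $F$ precisely because $F$ is a torsion-free sheaf in $\Ku(Y)$, yielding $\Hom(j^*P_l,F)\cong\Hom(I_{l/Y},F)=k$ as you anticipated.
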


\begin{proof}
The object $\pr_X(j_*(I_{l/Y}))\cong P_l$ is determined by the line $l$, it is clear that $f$ is injective. Since both moduli spaces are proper, to show that $f$ is a closed embedding, we only need to show that the tangent map of $f$ is injective.

Let $[E]\in\mathcal{M}_{\tau}^Y(1,1)$, we assume that $E\cong I_{l/Y}$ for a line $l$ on $Y$. The tangent map of~$f$ is given by the map \eqref{diff}
\[d_{[E]}: \Ext^1(E, E)\to \Ext^1(\pr_X(j_*E), \pr_X(j_*E)).\]
By the definition of $\pr_X$, we have a triangle
\[\pr_X(j_*E)\to j_*E\to  \oh_X(-H)[1]\oplus \oh_X(-H)[2],\]
then we obtain a triangle
\begin{equation} \label{line_seq_1}
    j^*\pr_X(j_*E)\to j^*j_*E\to  \oh_Y(-H)[1]\oplus \oh_Y(-H)[2].
\end{equation}
Taking the long exact sequence of cohomology on  \eqref{line_seq_1}, we get
\[\oh_Y(-H)[1]\to j^*\pr_X(j_*E)\to \cH^0(j^*\pr_X(j_*E))\]
and an exact sequence
\[0\to \oh_l(-H)\to \cH^0(j^*\pr_X(j_*E))\to I_{l/Y}\to 0.\]
If $F$ is a non-trivial self-extension of $E\cong I_{l/Y}$, we have 
$$\Hom(j^*\pr_X(j_*E), F)\cong\Hom(\cH^0(j^*\pr_X(j_*E)), F)\cong\Hom(I_{l/Y}, F)=k.$$
By Lemma~\ref{tgt_inj}, the tangent map $df_{[E]}$ of $f$ at the point $[E]$ is injective. This shows that $f$ is a closed embedding. Furthermore, the subvariety $\cM_{\tau}^Y(1,1)$ is Lagrangian according to Lemma~\ref{lag_lem}.
\end{proof}


There is a natural embedding $i:F(Y)\hookrightarrow F(X)$, mapping $[l\subset Y]$ to $[l\subset X]$. From the construction above, it is not hard to see that the embedding $f$ is compatible with $i$.

\begin{corollary}\label{commupatible_Fano_var_of_lines}
The embedding in Theorem~\ref{Fano_variety_lines} is compatible with the natural one, which means that we have a commutative diagram
\[\begin{tikzcd}
	{F(Y)} & {\mathcal{M}_{\tau}^Y(1,1)} \\
	{F(X)} & {\mathcal{M}_{\sigma}^X(1,1)}
	\arrow["f", hook, from=1-2, to=2-2]
	\arrow["i"', hook, from=1-1, to=2-1]
	\arrow["\cong"', from=2-1, to=2-2]
	\arrow["\cong"', from=1-1, to=1-2]
\end{tikzcd}\]
\end{corollary}

\subsection{$(a,b)=(2,1)$: The LLSvS eightfold}
Let $X$ be a cubic fourfold not containing a plane and $M_3(X)$ be the irreducible component of the Hilbert scheme of twisted cubics on $X$. In \cite{LLSvS17}, the authors construct a two-step contraction on $M_3(X)$
$$\alpha: M_3(X)\xrightarrow{\alpha_1}Z'\xrightarrow{\alpha_2}Z,$$
where $\alpha_1:M_3(X)\rightarrow Z'$ is a $\mathbb{P}^2$-bundle and $\alpha_2:Z'\rightarrow Z$ is blowing up the image of the embedding $\mu: X\hookrightarrow Z$. Usually, we call $Z$ the \emph{LLSvS eightfold}. On the other hand, if we consider all twisted cubics contained in a hyperplane section $Y$ and denote its image under $\alpha$ by $Z_Y$, then it is shown that $Z_Y$ is Lagrangian in $Z$ by \cite{shinder2017geometry}. 

The LLSvS eightfold $Z$ is reconstructed as the moduli space $\cM_{\sigma}^X(2,1)$ in \cite{li2018twisted}. 
Now we consider the moduli space $\cM_{\tau}^Y(2,1)$, it is a smooth projective variety of dimension four and the geometry of this moduli space is intensively studied in \cite{bayer2020desingularization} and \cite{altavilla2019moduli}. First of all, we identify the moduli space $\cM_{\tau}^Y(2,1)$ with $Z_Y$. 

\begin{theorem}
\label{SS_variety_equal_BBF_moduli_space}
Let $Y$ be a general cubic threefold, then the moduli space $\cM_{\tau}^Y(2,1)\cong Z_Y$. 
\end{theorem}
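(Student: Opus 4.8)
The goal is to identify the Bridgeland moduli space $\cM^Y_\tau(2,1)$ with the Lagrangian subvariety $Z_Y \subset Z = \cM^X_\sigma(2,1)$. My plan is to work on the cubic threefold side first, describe the $\tau$-stable objects of class $w(2,1)$ explicitly in terms of twisted cubics on $Y$, and then match this description with the known geometry of the LLSvS contraction restricted to twisted cubics in a hyperplane section. The starting point should be the existing analysis of $\cM^Y_\tau(2,1)$ in \cite{bayer2020desingularization} and \cite{altavilla2019moduli}: there the moduli space is shown to be a smooth projective fourfold, and the stable objects are described (up to shift and the projection functor $\pr_Y$) in terms of the Hilbert scheme $M_3(Y)$ of generalized twisted cubics on $Y$. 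Concretely, for a twisted cubic (or its degeneration) $C\subset Y$, one forms an object of $\Ku(Y)$ out of $\pr_Y(I_{C/Y})$ (or an analogous two-step mutation), and one checks it is $\tau$-stable of class $w(2,1)$; conversely every $\tau$-stable object of that class arises this way.

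\textbf{Key steps, in order.} First, I would recall from \cite{LLSvS17} the structure of $M_3(Y)$ for a general cubic threefold $Y$: it is irreducible, and the two-step contraction $\alpha\colon M_3(X)\to Z$ restricted to the locus of twisted cubics contained in $Y$ has image $Z_Y$, with the fibres of $\alpha_1$ being the $\PP^2$'s parametrising twisted cubics in a fixed determinantal net. I need to understand how this $\PP^2$-bundle structure interacts with the hyperplane section: for a general $Y$, the twisted cubics lying in $Y$ should still be swept out by $\PP^2$'s, so that $Z_Y$ is the image of a $\PP^2$-bundle over a fourfold, i.e. $Z_Y$ is itself four-dimensional, matching $\dim\cM^Y_\tau(2,1)=4$. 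Second, I would establish the bijection on points: a point of $Z_Y$ corresponds to an equivalence class of twisted cubics in $Y$ (modulo the $\PP^2$ and modulo the blow-up locus $\mu(X)\cap Z_Y$), and I would send it to the class of the object $\pr_Y$ applied to the ideal sheaf, showing this lands in $\cM^Y_\tau(2,1)$ and is stable. The class computation $[\pr_Y(I_{C/Y})]=w(2,1)$ should be a direct Chern-character bookkeeping using $\lambda_1,\lambda_2$. Third, I would upgrade the bijection to an isomorphism of varieties: since $\cM^Y_\tau(2,1)$ carries (or at least the relevant open part carries) a universal family, the Fourier--Mukai-type functor $\pr_Y$ (or rather the correspondence realising the two-step contraction followed by the K3-category object) induces a morphism in one direction; properness and smoothness of both sides (the moduli space is smooth by the genus-$0$ / Serre-invariance analysis, $Z_Y$ is smooth for general $Y$ by \cite{shinder2017geometry}) together with bijectivity and injectivity of tangent maps give the isomorphism. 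The tangent map injectivity is exactly the kind of $\Ext$-computation packaged in Lemma~\ref{tgt_inj}, applied on the threefold itself rather than across $j$.

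\textbf{Main obstacle.} The hard part will be the precise dictionary between the abstract moduli space $\cM^Y_\tau(2,1)$ and the concrete contraction $Z_Y$ — in particular handling the two non-generic loci simultaneously: the degenerate (non-CM or planar) twisted cubics, and the exceptional divisor of $\alpha_2$ coming from $\mu(X)\cap Z_Y$. On the LLSvS side these are resolved by the explicit geometry of determinantal representations; on the Bridgeland side they correspond to walls / to objects that are not simply $\pr_Y$ of an ideal sheaf but rather fit into nontrivial triangles with $\oh_Y$ and $\oh_Y(H)$. I expect one must either (i) invoke the detailed local description of $\cM^Y_\tau(2,1)$ from \cite{bayer2020desingularization}/\cite{altavilla2019moduli} to see that every stable object is of the expected form, or (ii) argue that the two morphisms (one realising $M_3(Y)\to Z_Y$, one realising the universal-family map to $\cM^Y_\tau(2,1)$) both factor through the same $\PP^2$-bundle contraction and then descend. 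A secondary technical point is checking that for a \emph{general} $Y$ the relevant genericity (no plane in $X$, $Y$ avoiding the bad loci of the contraction, $Z_Y$ smooth) can all be arranged at once; this is why the statement is restricted to general $Y$. Once the point-level bijection and the tangent-space comparison are in place, the isomorphism follows formally from both sides being smooth projective fourfolds.
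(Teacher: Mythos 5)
Your proposal is correct in outline and all the key ingredients of the paper's argument appear in it, but your primary route differs from the one the paper actually takes, and the paper's route is the one you only float as option (ii). The paper never compares tangent spaces for this theorem: it computes the projected object $E_C=\pr_Y(I_{C/Y}(2H))[-1]$ explicitly (a kernel of an evaluation map $\oh_Y^{\oplus 3}\to \oh_S(D)$ in the aCM case, resp.\ $\oh_Y^{\oplus 4}\to I_{p/Y}(H)$ with $p$ the embedded point in the non-aCM case), quotes \cite{bayer2020desingularization} for $\tau$-stability, uses the universal family on $M_3(Y)$ to get a proper surjective morphism $\pi\colon M_3(Y)\to\cM^Y_\tau(2,1)$, and then proves the single lemma that $\pi$ identifies $C$ and $C'$ if and only if $\alpha$ does — the forward direction from the explicit descriptions, the converse from Gieseker stability of $I_{C/S}$ in the aCM case and from the non-locally-free locus detecting the embedded point in the non-aCM case. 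Two contractions of $M_3(Y)$ with the same fibres then have the same target, and that is the whole proof. Your main route (point bijection between $Z_Y$ and the moduli space, upgraded by injectivity of tangent maps) runs into the problem you yourself anticipate: $Z_Y$ carries no evident universal object, so the morphism must be built from $M_3(Y)$ and descended anyway, at which point the tangent-space comparison is superfluous; what is genuinely needed instead is the fibre-matching lemma, which you gesture at but do not isolate. One bookkeeping correction: the relevant object is $\pr_Y(I_{C/Y}(2H))[-1]$, not $\pr_Y(I_{C/Y})$ — the twist by $2H$ is what produces the rank-three class $w(2,1)=3-H-\tfrac{3}{2}L+\tfrac{1}{2}P$; your hedge about ``an analogous two-step mutation'' covers this, but the explicit form of $E_C$ is used essentially in distinguishing the aCM and non-aCM fibres, so it cannot be left as unspecified bookkeeping.
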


We break the proof of Theorem~\ref{SS_variety_equal_BBF_moduli_space} into several lemmas. 

We define the projection functor  $\mathrm{pr}_Y:=\bL_{\oh_Y}\bL_{\oh_Y(H)}: D^b(Y)\rightarrow\Ku(Y)$ with respect to the semi-orthogonal decomposition
$$D^b(Y)=\langle\Ku(Y),\oh_Y, \oh_Y(H)\rangle.$$

Let $M_3(Y)$ be the irreducible component of the Hilbert scheme $\mathrm{Hilb}^{3t+1}(Y)$ containing smooth cubics on $Y$. We call curves in $M_3(Y)$ the (generalised) twisted cubics.

\begin{lemma}
\label{projection_object_cubics}
Let $Y$ be a smooth cubic threefold and $C\in M_3(Y)$. We denote the complex $$E_C:=\pr_Y(I_{C/Y}(2H))[-1].$$
\begin{enumerate}
    \item If $C$ is aCM, then $E_C\cong\ker(\oh_Y^{\oplus 3}\xra{ev} \oh_S(D))$ such that $S:=\langle C \rangle\cap Y$ is the cubic surface containing $C$ and $I_{C/S}(2H)\cong\oh_S(D)$, where $D$ is a Weil divisor on $S$. 
    \item If $C$ is not aCM, then 
    $E_C\cong\ker(\oh_Y^{\oplus 4}\xra{ev} I_{p/Y}(H))$, 
    where $p$ is the embedded point of $C$.
\end{enumerate}
\end{lemma}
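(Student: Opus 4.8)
The plan is to compute the projection functor $\pr_Y(I_{C/Y}(2H))$ explicitly by resolving $I_{C/Y}(2H)$ against the exceptional pair $\langle \oh_Y, \oh_Y(H)\rangle$, which amounts to understanding the cohomology groups $\Hom^\bullet(\oh_Y, I_{C/Y}(2H))$ and $\Hom^\bullet(\oh_Y(H), I_{C/Y}(2H)) = H^\bullet(Y, I_{C/Y}(H))$, and then distinguishing the two cases by whether $C$ is arithmetically Cohen--Macaulay (aCM).

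\medskip

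\textbf{Step 1: Cohomology of twisted ideal sheaves.} First I would recall the classification of curves in $M_3(Y)$: a generalised twisted cubic $C$ is either aCM (in which case it is contained in a unique cubic surface $S = \langle C\rangle \cap Y$, where $\langle C\rangle \cong \PP^3$ is the linear span) or non-aCM (in which case $C = C' \cup \{p\}$ structure, i.e. $C$ is the union of a plane cubic and an embedded/isolated point, and the relevant hyperplane-span geometry degenerates). Using the exact sequence $0 \to I_{C/Y}(kH) \to \oh_Y(kH) \to \oh_C(kH) \to 0$ together with the known Hilbert polynomial $3t+1$ of $C$ and vanishing theorems on the cubic threefold $Y$ (Kodaira-type vanishing, $H^i(Y,\oh_Y(kH))$ for small $k$), I would compute $H^\bullet(Y, I_{C/Y}(H))$ and $H^\bullet(Y, I_{C/Y}(2H))$. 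In the aCM case the defining property gives $H^1(Y, I_{C/Y}(kH)) = 0$ for all $k$, and one finds $\hom(\oh_Y, I_{C/Y}(2H)) = 3$ with no higher cohomology and $H^\bullet(Y, I_{C/Y}(H)) = 0$; in the non-aCM case the extra point contributes an $H^1$, and the count shifts to $\hom(\oh_Y, I_{C/Y}(2H)) = 4$ plus a nonzero $\Hom(\oh_Y(H), I_{C/Y}(2H)[1])$-type term controlled by the embedded point.

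\medskip

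\textbf{Step 2: Evaluate the two mutations.} With these cohomology dimensions in hand I would apply $\bL_{\oh_Y(H)}$ first: since $H^\bullet(Y, I_{C/Y}(H)) = 0$ in the aCM case, $\bL_{\oh_Y(H)} I_{C/Y}(2H) = I_{C/Y}(2H)$, and then $\bL_{\oh_Y} I_{C/Y}(2H) = \cone(\oh_Y^{\oplus 3} \xra{\ev} I_{C/Y}(2H))[\,\cdot\,]$. Identifying this cone: the evaluation map $\oh_Y^{\oplus 3} \to I_{C/Y}(2H)$ is, after restricting to $S = \langle C\rangle \cap Y$, the map $\oh_S^{\oplus 3} \to I_{C/S}(2H) \cong \oh_S(D)$ given by the three sections cutting out $C$ on the cubic surface (so $D$ is the residual Weil divisor class), and its kernel/cokernel analysis yields $E_C = \pr_Y(I_{C/Y}(2H))[-1] \cong \ker(\oh_Y^{\oplus 3} \xra{\ev} \oh_S(D))$, giving part (1). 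For the non-aCM case the same procedure with one extra generator and the $\oh_Y(H)$-mutation now nontrivial (absorbing the embedded point $p$) produces $E_C \cong \ker(\oh_Y^{\oplus 4} \xra{\ev} I_{p/Y}(H))$, giving part (2). Throughout I would use the standard fact that $\pr_Y = \bL_{\oh_Y}\bL_{\oh_Y(H)}$ lands in $\Ku(Y)$ and that the shift $[-1]$ is exactly what turns the two-term complex into an honest sheaf (a kernel).

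\medskip

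\textbf{Main obstacle.} The hard part will be the precise identification of the cone/kernel in each case as a genuine sheaf of the stated form — in particular checking that the evaluation map $\oh_Y^{\oplus 3} \to I_{C/Y}(2H)$ is surjective onto the right object after passing to the cubic surface $S$, that $I_{C/S}(2H)$ is indeed an invertible-in-codimension-one sheaf $\oh_S(D)$ for a Weil divisor $D$ (recall $S$ may be singular, so one must work with Weil divisors and reflexive sheaves rather than line bundles), and analogously pinning down the embedded point $p$ and the target $I_{p/Y}(H)$ in the non-aCM case. This is where the generality hypothesis on $Y$ is likely used, to control the singularities of $S$ and the stratification of $M_3(Y)$. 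The cohomological bookkeeping of Step 1 is routine once the geometry of aCM versus non-aCM twisted cubics on a cubic threefold is correctly set up, which I would import from the cited works on $M_3(Y)$ (\cite{bayer2020desingularization}, \cite{altavilla2019moduli}) and the analogous cubic-fourfold analysis in \cite{LLSvS17}, \cite{li2018twisted}.
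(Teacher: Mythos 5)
Your overall strategy --- resolve $I_{C/Y}(2H)$ against the exceptional pair by computing the relevant $\RHom$'s and identify the resulting two-term complex as a kernel --- is the right one and matches the paper's. But your Step 1 contains a concrete error that derails the computation. In the aCM case you claim $H^\bullet(Y, I_{C/Y}(H))=0$, so that $\bL_{\oh_Y(H)}$ acts trivially, and that $\hom(\oh_Y, I_{C/Y}(2H))=3$. Both are false: a twisted cubic spans a unique hyperplane $\langle C\rangle\cong\PP^3$, so $H^0(Y, I_{C/Y}(H))=k$ and the first mutation is genuinely non-trivial --- it is the cone of the injection $\oh_Y(H)\xra{\cdot\langle C\rangle} I_{C/Y}(2H)$, which replaces $I_{C/Y}(2H)$ by the \emph{torsion} sheaf $I_{C/S}(2H)$ supported on the cubic surface $S=\langle C\rangle\cap Y$ (this is the content of \cite[(1.2.2)]{LLMS18} that the paper invokes). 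Only after this step does one get $\RHom(\oh_Y, I_{C/S}(2H))=k^3$; on $Y$ itself one has $h^0(I_{C/Y}(2H))=8$, not $3$. Skipping the first mutation would give you $\ker(\oh_Y^{\oplus 8}\to I_{C/Y}(2H))$, a rank-$7$ sheaf, rather than the rank-$3$ sheaf $\ker(\oh_Y^{\oplus 3}\to\oh_S(D))$ of the statement; your attempt to repair this by ``restricting the evaluation to $S$'' is not a step the mutation formalism performs for you --- it is exactly what $\bL_{\oh_Y(H)}$ does, and you have declared that mutation trivial.

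The non-aCM case in your write-up inherits the same confusion and is too vague to check (``one extra generator, the $\oh_Y(H)$-mutation absorbing $p$''). The paper's argument here is different and worth noting: it writes $0\to I_{C/Y}(2H)\to I_{C_0/Y}(2H)\to k_p\to 0$ with $C_0$ the underlying plane cubic and $p$ the embedded point, uses the Koszul resolution $0\to\oh_Y\to\oh_Y(H)^{\oplus 2}\to I_{C_0/Y}(2H)\to 0$ to show $\pr_Y(I_{C_0/Y}(2H))=0$, hence $\pr_Y(I_{C/Y}(2H))\cong\pr_Y(k_p)[-1]$, and then computes $\pr_Y(k_p)$ directly via $\bL_{\oh_Y(H)}k_p[-1]\cong I_{p/Y}(H)$ and $h^0(Y,I_{p/Y}(H))=4$. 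In particular the ``$4$'' is $h^0(I_{p/Y}(H))$, not $h^0(I_{C/Y}(2H))$ as your Step 1 suggests. Finally, your worry about needing $Y$ general to control the singularities of $S$ is unnecessary: the lemma holds for every smooth cubic threefold, with normality and integrality of $S$ for aCM curves, and the identification $I_{C/S}(2H)\cong\oh_S(D)$, supplied by \cite[Propositions 3.1, 3.2]{bayer2020desingularization}.
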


\begin{proof}\leavevmode
By \cite[(1.2.2)]{LLMS18}, it is not hard to see $\bL_{\oh_Y(H)}I_{C/Y}(2H)$ sits in the triangle
$$\oh_Y(H)\rightarrow I_{C/Y}(2H)\rightarrow\bL_{\oh_Y(H)}I_{C/Y}(2H)\cong I_{C/S}(2H),$$
where $S:=\langle C \rangle\cap Y$.
Then $E_C\cong\bL_{\oh_Y}(I_{C/S}(2H))[-1]$. Note that $\RHom(\oh_Y, I_{C/S}(2H))=k^3[0]$ by \cite[(1.2.2)]{LLMS18}, we have a triangle
$$E_C\to \oh_Y^{\oplus 3}\xra{ev} I_{C/S}(2H).$$
Thus we obtain that $E_C\cong\ker(\oh_Y^{\oplus 3}\xra{ev} I_{C/S}(2H))$.

(1): If $C$ is aCM, by \cite[Proposition 3.1]{bayer2020desingularization}, we know that $S$ is normal and integral. Then by \cite[Proposition 3.2]{bayer2020desingularization}, if we set $D:=2H-C$, we have $I_{C/S}(2H)\cong \oh_S(D)$ as desired.

(2): If $C$ is not aCM, there are two short exact sequences
\begin{equation} \label{seq_kp}
    0\rightarrow I_{C/Y}(2H)\rightarrow I_{C_0/Y}(2H)\rightarrow k_p\rightarrow 0
\end{equation}
and 
\begin{equation}\label{SEQ_C0}
    0\rightarrow\oh_Y\rightarrow\oh_Y^{\oplus 2}(H)\rightarrow I_{C_0/Y}(2H)\rightarrow 0,
\end{equation}
where $C_0$ is a plane cubic curve and $p$ is the embedded point. The second exact sequence is the Koszul resolution of $I_{C_0/Y}$. Applying $\bL_{\oh_Y(H)}$ to (\ref{SEQ_C0}), we get $$\bL_{\oh_Y(H)}(I_{C_0/Y}(2H))\cong\oh_Y[1].$$
Then we obtain that $\mathrm{pr}_Y(I_{C_0/Y})\cong0$. 

Applying the projection functor $\mathrm{pr}_Y$ to (\ref{seq_kp}), we have $\mathrm{pr}_Y(k_p)[-1]\cong\mathrm{pr}_Y(I_{C/Y}(2H)).$ Then from $\bL_{\oh_Y(H)}k_p[-1]\cong I_{p/Y}(H)$, we know that $E_C$ is the kernel of the evaluation map
\[E_C\to \oh_Y^{\oplus 4}\xra{ev} I_{p/Y}(H).\qedhere\]
\end{proof}

In fact, $M_3(Y)$ admits a universal family. At the same time, $\mathrm{pr}_Y(I_{C/Y}(2H))$ is $\tau$-stable in~$\Ku(Y)$ for every Serre-invariant stability condition $\tau$ by Lemma \ref{projection_object_cubics} and \cite[Theorem 6.1(2), Theorem 8.7]{bayer2020desingularization}. Then according to the standard argument as in \cite[Theorem 3.9]{li2018twisted}, 
the functor $\pr_Y$ induces a dominant morphism $$\pi:M_3(Y)\rightarrow\cM_{\tau}^Y(2,1).$$ 
Indeed, $\pi$ is a proper surjective morphism. Note that the subvariety $Z_Y\subset Z$ is a two-step contraction of $M_3(Y)$ via the morphism $\alpha$. To show that $\cM_{\tau}^Y(2,1)$ is isomorphic to $Z_Y$, it is enough to prove that $\pi$ contracts the same locus as $\alpha$, which is equivalent to the statement in the following lemma.

\begin{lemma}
\label{projection_functor_coincide_alpha}
For twisted cubics $C$ and $C'$ on $Y$, $\mathrm{pr}_Y(I_{C/Y}(2H))\cong\mathrm{pr}_Y(I_{C'/Y}(2H))$ if and only if $\alpha(C)=\alpha(C')$, where $\alpha:M_3(Y)\rightarrow Z_Y$. Thus we have $Z_Y\cong \cM_{\tau}^Y(2,1)$. 
\end{lemma}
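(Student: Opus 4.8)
The plan is to analyze the two contractions separately and match their non-injective loci. First I would recall from \cite{LLSvS17} the precise description of the two-step contraction $\alpha$ restricted to $M_3(Y)$: the $\mathbb{P}^2$-bundle $\alpha_1$ contracts the linear systems $|D|$ inside an aCM cubic surface $S=\langle C\rangle\cap Y$, while the blow-down $\alpha_2$ contracts the exceptional divisor to $\mu(Y)\subset Z$, which parametrizes exactly the non-aCM twisted cubics (those with an embedded point $p$). So two twisted cubics $C,C'$ satisfy $\alpha(C)=\alpha(C')$ precisely when either (i) both are aCM, lie on the same cubic surface $S$, and give linearly equivalent Weil divisors $D=2H-C$, $D'=2H-C'$ on $S$; or (ii) both are non-aCM with the same embedded point $p$.

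The strategy is then to show $\pr_Y(I_{C/Y}(2H))\cong\pr_Y(I_{C'/Y}(2H))$ holds under exactly these same conditions, using the explicit models from Lemma~\ref{projection_object_cubics}. For the aCM case, $E_C\cong\ker(\oh_Y^{\oplus 3}\xrightarrow{ev}\oh_S(D))$; since the three global sections of $\oh_S(D)$ recover $S$ (as the support, or via $\oh_S$ as a quotient-related sheaf) and the divisor class $D$ up to the $\mathrm{PGL}_3$ action on $H^0(\oh_S(D))$, one recovers the pair $(S,|D|)$ from $E_C$, and conversely linearly equivalent divisors give isomorphic kernels. For the non-aCM case, $E_C\cong\ker(\oh_Y^{\oplus 4}\xrightarrow{ev} I_{p/Y}(H))$, and since $H^0(I_{p/Y}(H))$ together with the ideal sheaf structure pins down the point $p$, the object $E_C$ determines and is determined by $p$. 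One also must check the two families don't collide: an aCM $E_C$ is never isomorphic to a non-aCM one, which follows by comparing, e.g., the sheaf $\mathcal{H}^0$ or the locus where the cohomology sheaves fail to be locally free. Once the ``if and only if'' is established, the induced morphism $\pi:M_3(Y)\to\cM_\tau^Y(2,1)$ contracts exactly the fibers of $\alpha$, so it factors through $\alpha$ as $\cM_\tau^Y(2,1)\leftarrow Z_Y$ via a bijective morphism; since $\cM_\tau^Y(2,1)$ is smooth (hence normal) and $Z_Y$ is reduced, Zariski's main theorem gives $Z_Y\cong\cM_\tau^Y(2,1)$.

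I expect the main obstacle to be the aCM case of the reconstruction: extracting the cubic surface $S$ and the divisor class $D$ intrinsically from the complex $E_C=\ker(\oh_Y^{\oplus 3}\xrightarrow{ev}\oh_S(D))$, and proving that $E_C\cong E_{C'}$ forces $S=S'$ and $D\sim D'$ rather than merely some weaker relation. This requires a careful analysis of the evaluation map and its cokernel — one should identify $\oh_S(D)$ as a canonical subquotient of a complex built from $E_C$ (for instance via $\mathcal{RHom}(E_C,\oh_Y)$ or by reconstructing the non-free locus), and then use that $S$ is normal and integral (Lemma~\ref{projection_object_cubics}, via \cite[Proposition 3.1]{bayer2020desingularization}) so that $\mathrm{Cl}(S)$ behaves well. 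The genericity hypothesis on $Y$ presumably enters to guarantee that all cubic surfaces $S$ appearing are normal with mild singularities, so that the results of \cite{bayer2020desingularization} apply uniformly; I would invoke those results rather than reprove them. The non-aCM case and the non-collision of the two strata should be comparatively routine.
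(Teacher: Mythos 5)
Your overall architecture matches the paper's: you translate $\alpha(C)=\alpha(C')$ into the two cases (same linear system on the same aCM cubic surface, or same embedded point), use Lemma~\ref{projection_object_cubics} to handle the forward direction, and separate the reverse direction into aCM/aCM, non-aCM/non-aCM, and mixed cases. Your non-aCM case (the embedded point is the unique non-locally-free point of $E_C$) and your non-collision argument (locally free versus not) are exactly the paper's. The concluding step is also consistent: the paper reduces the isomorphism $Z_Y\cong\cM_\tau^Y(2,1)$ to the statement that $\pi$ and $\alpha$ contract the same loci, which is what you do via Zariski's main theorem.

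The one place where you diverge --- and where your proposal has a genuine gap --- is the aCM/aCM injectivity step, which you yourself flag as the main obstacle but do not actually carry out. Your plan is to reconstruct the pair $(S,|D|)$ intrinsically from $E_C$ by identifying $\oh_S(D)$ as a canonical subquotient of the presentation $E_C\hookrightarrow\oh_Y^{\oplus 3}\twoheadrightarrow\oh_S(D)$. The difficulty is that this presentation is not visibly canonical in the isomorphism class of $E_C$: an abstract isomorphism $E_C\cong E_{C'}$ does not a priori intertwine the two evaluation maps, so you cannot immediately compare cokernels, and making the reconstruction functorial (e.g.\ by identifying $\oh_Y^{\oplus 3}$ with $\Hom(E_C,\oh_Y)^\vee\otimes\oh_Y$ and checking the evaluation is recovered) requires additional cohomological vanishing that you have not supplied. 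The paper avoids this entirely: it applies the argument of \cite[Proposition 8.1]{GLZ2021conics} to deduce from $E_C\cong E_{C'}$ that $\Hom(I_{C/S},I_{C'/S'})\neq 0$, and then concludes $I_{C/S}\cong I_{C'/S'}$ because both sheaves are Gieseker-stable of the same character. This nonzero-Hom-plus-stability device is the missing ingredient in your write-up; without it (or a completed version of your reconstruction), the aCM case is not proved.
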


\begin{proof}
The argument is very similar to that in \cite[Proposition 2]{AL17}, but the situation here is simpler. 
If $\alpha(C)=\alpha(C')=p\in\mu(Y)$, then $C$ and $C'$ are both non-aCM twisted cubics with the embedded point $p$. Thus by Lemma~\ref{projection_object_cubics}(2), we have $\mathrm{pr}_Y(I_{C/Y}(2H))\cong\mathrm{pr}_Y(I_{C'/Y}(2H))$. If $\alpha(C)=\alpha(C')\notin\mu(Y)$, then $C$ and $C'$ are both aCM twisted cubics and they are in the same fiber of the $\mathbb{P}^2$-bundle map $\alpha_1$. This implies that they are in the same linear system, i.e., $I_{C/S}\cong I_{C'/S}$. Then $\mathrm{pr}_Y(I_{C/Y}(2H))\cong\mathrm{pr}_Y(I_{C'/Y}(2H))$.

Conversely, we show that if  $\pr_Y(I_{C/Y}(2H))\cong\pr_Y(I_{C'/Y}(2H))$, then $\alpha(C)=\alpha(C')$.
\begin{enumerate}
    \item If $C$ and $C'$ are both aCM, we need to show that $C$ and $C'$ are contained in the same cubic surface and in the same linear system. Let $S:=\langle C \rangle \cap Y$ and $S':=\langle C' \rangle \cap Y$. By assumption, we know that $E_C\cong E_{C'}$. Taking the same argument as in  \cite[Proposition 8.1]{GLZ2021conics}, we know that $\Hom(I_{C/S}, I_{C'/S'})\neq 0$. Then we have $I_{C/S}\cong I_{C'/S'}$ since they are Gieseker-stable. Hence $\alpha(C)=\alpha(C')$.
    
    \item If $C$ and $C'$ are not aCM with the embedded points $p$ and $p'$, by the isomorphism of projection objects  $\pr_Y(I_{C/Y}(2H))\cong\pr_Y(I_{C'/Y}(2H))$, we know that $E_C\cong E_{C'}$. Since $E_C$ is only non-locally free at $p$ and $E_{C'}$ is only non-locally free at $p'$, $p=p'$ and  $\alpha(C)=\alpha(C')$. 
    
    \item If $C$ is aCM and $C'$ is not aCM, then we prove that their projection objects in $\Ku(Y)$ can not be isomorphic. This is obvious since by Lemma~\ref{projection_object_cubics}, $E_C$ is locally free, while $E_{C'}$ is non-locally free at the embedded point of $C'$. Hence they cannot be isomorphic.\qedhere 
\end{enumerate}
\end{proof}

Next we show that the functor $\pr_X\circ j_*: \Ku(Y)\rightarrow\Ku(X)$ induces an embedding  $$p_j:\cM_{\tau}^Y(2,1)\to \cM_{\sigma}^X(2,1)$$ and $\cM_{\tau}^Y(2,1)$ is Lagrangian.

For a twisted cubic curve $C\subset X$ contained in a cubic surface $S\subset X$, let $F_C$ be the kernel of the evaluation map
\[ev: H^0(X, I_{C/S}(2H))\otimes \oh_X\twoheadrightarrow I_{C/S}(2H)\]
and $F'_C:=\bR_{\oh_X(-H)}\bL_{\oh_X}\bL_{\oh_Y(H)}F_C\cong\bR_{\oh_X(-H)}F_C\in\Ku(X)$ be the projection object of $F_C$ to the Kuznetsov component. By \cite[Lemma 2.3]{LLSvS17}, if $C$ is aCM, we have $F_C\cong F'_{C}$; if $C$ is not aCM, there is a non-splitting triangle
$$F'_C\rightarrow F_C\rightarrow\oh_X(-H)[1]\oplus\oh_X(-H)[2].$$
Thus for a non-aCM cubic $C$, we have $\cH^{-1}(F'_C)\cong\oh_X(-H)$ and a non-splitting exact sequence
\begin{equation} \label{H0FC}
    0\to \oh_X(-H)\to \cH^{0}(F'_C)\to F_C\to 0.
\end{equation}

\begin{proposition} \label{push-cubic}
Let $X$ be a cubic fourfold and $j: Y\hookrightarrow X$ be a smooth hyperplane section. Let $C$ be a twisted cubic on $Y$, then we have
\[\pr_X(j_*E_C)\cong F'_C.\]

\end{proposition}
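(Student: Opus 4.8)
The plan is to compute $\pr_X(j_*E_C)$ by unwinding the definition of $E_C = \pr_Y(I_{C/Y}(2H))[-1]$ and carefully tracking how the projection functors on $Y$ and on $X$ are related through $j_*$. First I would use the two cases of Lemma~\ref{projection_object_cubics}: $E_C$ is the kernel of an evaluation map onto either $I_{C/S}(2H) = \oh_S(D)$ (aCM case) or $I_{p/Y}(H)$ (non-aCM case). In parallel, the target object $F'_C$ is defined as $\bR_{\oh_X(-H)}F_C$ where $F_C = \ker(H^0(X,I_{C/S}(2H))\otimes\oh_X \to I_{C/S}(2H))$, and the geometry of $F_C$ versus $F'_C$ is already recorded (with the triangle $F'_C \to F_C \to \oh_X(-H)[1]\oplus\oh_X(-H)[2]$ in the non-aCM case). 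So the comparison really reduces to a comparison of the two evaluation kernels, on $Y$ versus on $X$, after pushing forward along $j$.

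The key steps, in order: (i) write $j_*I_{C/Y}(2H)$ in $D^b(X)$ and apply $\bL_{\oh_X(H)}$, using the analogue on $X$ of the triangle $\oh_Y(H)\to I_{C/Y}(2H)\to I_{C/S}(2H)$ from the proof of Lemma~\ref{projection_object_cubics} — here I expect $j_*I_{C/S}(2H) = I_{C/S}(2H)$ viewed as a sheaf on $X$ (since $S\subset Y\subset X$), together with an exact sequence relating $j_*I_{C/Y}(2H)$ to $I_{C/S}(2H)$ and the structure sheaf $\oh_Y(H)=j_*\oh_Y(H)$ on $X$, the latter being resolved by $\oh_X(H)\to\oh_X(2H)$; (ii) apply $\bL_{\oh_X}$ and match $\RHom_X(\oh_X, I_{C/S}(2H)) = \RHom_Y(\oh_Y, I_{C/S}(2H)) = k^3$ (aCM) or the corresponding computation in the non-aCM case, so that the evaluation kernel on $X$ agrees with $F_C$ up to the contribution of the $\oh_X(-H)$ and $\oh_X$ mutations; (iii) apply $\bR_{\oh_X(-H)}$ and check that the leftover terms assemble precisely into the triangle defining $F'_C$ from $F_C$, i.e. $\oh_X(-H)[1]\oplus\oh_X(-H)[2]$ in the non-aCM case and nothing extra in the aCM case. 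Throughout, one uses that $j_*$ commutes with the relevant left mutations only after absorbing the discrepancy $j^*j_*G \to G \to G(-H)[1]$, so bookkeeping of these shift-by-$(-H)$ error terms is essential.

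I would organize the non-aCM case around the exact sequence \eqref{H0FC} and Lemma~\ref{projection_object_cubics}(2): show $\cH^{-1}(\pr_X(j_*E_C))\cong\oh_X(-H)$ and that $\cH^0(\pr_X(j_*E_C))$ sits in the same non-split extension $0\to\oh_X(-H)\to\cH^0\to F_C\to 0$, then invoke that both are objects of $\Ku(X)$ with the same cohomology sheaves and the same (non-trivial) extension class, hence isomorphic; the aCM case is cleaner since $F_C\cong F'_C$ and $E_C$ is a genuine kernel sheaf, so one just needs $j_*E_C$ to project to $F_C$. The main obstacle I anticipate is the precise behavior of $\bL_{\oh_X}$ and $\bL_{\oh_X(H)}$ applied to $j_*$ of sheaves on $Y$ — specifically verifying that $\RHom_X$-computations reduce to the $\RHom_Y$-computations already used in Lemma~\ref{projection_object_cubics} without introducing spurious cohomology, and then confirming that the extra $\oh_X(-H)$-summands produced by comparing $\Ku(X)$ with $\Ku(Y)$ land in exactly the right cohomological degrees to reproduce the known triangle relating $F_C$ and $F'_C$ from \cite{LLSvS17}. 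This is a finite but delicate diagram chase with mutation functors, and getting the shifts and the non-splitting of \eqref{H0FC} to match is where the real content lies.
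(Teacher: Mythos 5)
Your proposal follows essentially the same route as the paper: present $E_C$ as the evaluation kernel from Lemma~\ref{projection_object_cubics}, push forward along $j$, and compare with $F_C$ resp.\ $F'_C$, splitting into the aCM case (where $F_C\cong F'_C$ and nothing extra appears) and the non-aCM case (where one matches $\cH^{-1}\cong\oh_X(-H)$ and the non-split extension $0\to\oh_X(-H)\to\cH^0\to F_C\to 0$, using one-dimensionality of the relevant $\Ext$-groups to pin down the isomorphism). The only streamlining you miss is that since $E_C\in\Ku(Y)$, the pushforward $j_*E_C$ is already left-orthogonal to $\oh_X$ and $\oh_X(H)$, so $\pr_X(j_*E_C)\cong\bR_{\oh_X(-H)}(j_*E_C)$ and the delicate bookkeeping of the $\bL_{\oh_X(H)}$ and $\bL_{\oh_X}$ mutations that you anticipate as the main obstacle is in fact unnecessary.
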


\begin{proof}
Firstly, recall that for a twisted cubic $C\subset Y$, we have $\pr_X(j_*E_C)\cong\bR_{\oh_X(-H)}(j_*E_C)$. Then it suffices to prove that 
$\bR_{\oh_X(-H)}(j_*E_C)\cong  F'_C$. 
From Lemma \ref{projection_object_cubics}, we have a triangle
\[E_C\to \oh_Y^{\oplus 3}\xra{ev} I_{C/S}(2H).\]
Applying the functor $\bR_{\oh_X(-H)}\circ j_*$ to this triangle, we obtain
\begin{equation} \label{right-mut}
    \bR_{\oh_X(-H)}(j_*E_C)\to \bR_{\oh_X(-H)}(j_*\oh_Y^{\oplus 3})\to \bR_{\oh_X(-H)}(I_{C/S}(2H)).
\end{equation}
Note that $\bR_{\oh_X(-H)}(j_*\oh_Y)\cong \oh_X.$ Then we have 
\begin{equation} \label{right-mut_1}
    \bR_{\oh_X(-H)}(j_*E_C)\to \oh_X^{\oplus 3}\to \bR_{\oh_X(-H)}(I_{C/S}(2H)).
\end{equation}

Now we assume that $C$ is aCM.
We know that $\RHom(I_{C/S}(2H), \oh_X(-H))=0$. Then we have $\bR_{\oh_X(-H)}(I_{C/S}(2H))\cong I_{C/S}(2H)$. Thus the triangle (\ref{right-mut_1}) becomes
\[\bR_{\oh_X(-H)}(j_*E_C)\to \oh_X^{\oplus 3}\xra{ev} I_{C/S}(2H),\]
which implies that $\bR_{\oh_X(-H)}(j_*E_C)\cong F_C$ by the definition of $F_C$. Then we have $$\pr_X(j_*E_C)\cong F_C\cong F'_C.$$

Now we assume that $C$ is not aCM. We have $\RHom(\oh_X, I_{C/S})=k[-1]\oplus k[-2]$. Then by Serre duality, $\RHom(I_{C/S}(2H), \oh_X(-H))=k[-2]\oplus k[-3]$. Thus we get a non-splitting triangle
\[\bR_{\oh_X(-H)}(I_{C/S}(2H))\to I_{C/S}(2H)\to \oh_X(-H)[2]\oplus \oh_X(-H)[3].\]
Taking the long exact sequence of cohomology on \eqref{right-mut_1}, we obtain an isomorphism  $$\cH^{-1}(\bR_{\oh_X(-H)}(j_*E_C))\cong\oh_X(-H)$$ and an exact sequence
\[0\to \oh_X(-H)\to \cH^{0}(\bR_{\oh_X(-H)}(j_*E_C))\to \oh^{\oplus 3}_X\xra{ev} I_{C/S}(2H)\to 0.\]
Thus by definition of $F_C$, we have a non-splitting exact sequence
\[0\to \oh_X(-H)\to \cH^{0}(\bR_{\oh_X(-H)}(j_*E_C))\to F_C\to 0.\]
Note that $\Ext^1(F_C, \oh_X(-H))=k$, then we have $\cH^{0}(\bR_{\oh_X(-H)}(j_*E_C))\cong \cH^0(F'_C)$ by \eqref{H0FC}. Moreover, 
$\cH^{-1}(\bR_{\oh_X(-H)}(j_*E_C))\cong \cH^{-1}(F'_C)\cong \oh_X(-H)$. From the fact that 
\[\Hom(\cH^{0}(F'_C), \cH^{-1}(F'_C)[2])\cong \Hom(\cH^{0}(F'_C), \oh_X(-H)[2])=k,\]
such a non-trivial extension is unique up to an isomorphism. Hence we have $\pr_X(j_*E_C)\cong F'_C$.
\end{proof}

\begin{theorem} \label{8fold-embed}
Let $X$ be a cubic fourfold not containing a plane and  $j:Y\hookrightarrow X$ be a smooth cubic threefold. Then the functor $\pr_X\circ j_*$ induces a closed embedding 
\[p_j: \cM_{\tau}^Y(2,1)\hookrightarrow \cM_{\sigma}^X(2,1).\]
In particular, $\cM_{\tau}^Y(2,1)$ is a Lagrangian subvariety of $\cM_{\sigma}^X(2,1)$.
\end{theorem}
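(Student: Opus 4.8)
The plan is to follow the proof of Theorem~\ref{Fano_variety_lines}: construct the morphism $p_j$, prove it is injective on closed points, prove its differential is everywhere injective, and then invoke Lemma~\ref{lag_lem}. For the morphism, recall that by Theorem~\ref{SS_variety_equal_BBF_moduli_space} the source $\cM_\tau^Y(2,1)\cong Z_Y$ carries a universal family --- either because the primitive class $w(2,1)$ makes the moduli space fine, or by descending the universal family of twisted cubics on $M_3(Y)$ along the contraction $\alpha$, which is legitimate by Lemma~\ref{projection_functor_coincide_alpha}. By Proposition~\ref{push-cubic} the Fourier--Mukai type functor $\pr_X\circ j_*$ sends $E_C$ to $F'_C$, each $F'_C$ is $\sigma$-stable, and $\cM_\sigma^X(2,1)\cong Z$, all by \cite{li2018twisted}. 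Hence, exactly as in \cite[Theorem~3.9]{li2018twisted}, one obtains a morphism $p_j\colon\cM_\tau^Y(2,1)\to\cM_\sigma^X(2,1)$ with $[E_C]\mapsto[F'_C]$.

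For injectivity on points I would show that $F'_C$ recovers $\alpha(C)$. First, $F'_C$ detects whether $C$ is aCM, since $\cH^{-1}(F'_C)=0$ exactly when $C$ is aCM (Lemma~\ref{projection_object_cubics} and Proposition~\ref{push-cubic}). If $C$ is aCM then $F'_C=F_C$ recovers $I_{C/S}(2H)$ as the cokernel of $F_C\hookrightarrow\oh_X^{\oplus 3}$, hence the cubic surface $S=\langle C\rangle\cap Y$ and the linear system of $C$ on $S$; if $C$ is non-aCM then the sequence~\eqref{H0FC} lets one read the embedded point off $\cH^0(F'_C)$. In either case this is exactly the data distinguishing $\alpha(C)$ in $Z_Y$, as in the argument of Lemma~\ref{projection_functor_coincide_alpha}, so $F'_C\cong F'_{C'}$ forces $\alpha(C)=\alpha(C')$; combined with $\cM_\tau^Y(2,1)\cong Z_Y$ this gives injectivity of $p_j$. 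Equivalently, one checks that $p_j$ is the natural closed embedding $Z_Y\hookrightarrow Z$ of \cite{shinder2017geometry}.

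For the differential, note that $E_C$ and $F'_C$ are simple (being stable), so by Lemma~\ref{tgt_inj} it suffices to prove $\Hom(j^*\pr_X(j_*E_C),F)=k$ for every non-split self-extension $F$ of $E_C$, i.e.\ $\Hom(j^*F'_C,F)=k$ by Proposition~\ref{push-cubic}. I would compute $j^*F'_C=Lj^*F'_C$ by applying $Lj^*$ to the presentations of $F'_C$ produced in Lemma~\ref{projection_object_cubics} and Proposition~\ref{push-cubic}, using the standard formula $Lj^*(j_*\mathcal{F})\cong\mathcal{F}\oplus\mathcal{F}(-H)[1]$ (valid since $Y$ is a smooth divisor with $\oh_Y(Y)=\oh_Y(H)$) together with $Lj^*\oh_X(aH)=\oh_Y(aH)$. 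Passing to cohomology sheaves --- separately in the aCM and non-aCM cases, as in the proof of Proposition~\ref{push-cubic} --- the negative cohomology of $j^*F'_C$ consists of copies of $\oh_Y(-H)$, while $\cH^0(j^*F'_C)$ sits in a short exact sequence $0\to K\to\cH^0(j^*F'_C)\to E_C\to 0$ with $K=I_{C/S}(H)$ in the aCM case and $K=I_{p/Y}$ in the non-aCM case. Since $F$ is a sheaf, $\Hom(j^*F'_C,F)\cong\Hom(\cH^0(j^*F'_C),F)$ (the negative cohomology contributes nothing to $\Hom$ into a sheaf placed in degree $0$). Finally $\Hom(K,F)=0$: in the aCM case $K$ is a torsion sheaf and $F$, being an iterated extension of the subsheaf $E_C\subset\oh_Y^{\oplus 3}$, is torsion-free; in the non-aCM case $\Hom(I_{p/Y},E_C)=0$ because $E_C$, a kernel of a surjection onto a sheaf of projective dimension $\le 2$, has depth $\ge 2$ at $p$, whence $\Hom(I_{p/Y},F)=0$. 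Therefore $\Hom(j^*F'_C,F)\cong\Hom(E_C,F)=k$ by stability, and $dp_j$ is injective.

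Since both moduli spaces are projective, $p_j$ is proper; being injective with injective differential it is a closed embedding. As $X$ is a cubic fourfold and $E_C\in\Ku(Y)$ is $\tau$-stable, we have $\Ext^2(E_C,E_C)=0$ and $\cM_\tau^Y(2,1)$ is smooth of the expected dimension, so Lemma~\ref{lag_lem} applies and $\cM_\tau^Y(2,1)$ is Lagrangian in $\cM_\sigma^X(2,1)$, recovering \cite[Proposition~6.9]{shinder2017geometry}. I expect the main obstacle to be the computation of $j^*\pr_X(j_*E_C)$: one must track the mutation triangles and the two cases carefully, and in particular verify that the pullback yields the displayed presentation in which $K$ contributes no homomorphisms into $F$, so that only the diagonal copy $k=\Hom(E_C,F)$ survives.
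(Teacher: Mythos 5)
Your proposal follows the paper's proof essentially step for step: the morphism is obtained from a universal family plus Proposition~\ref{push-cubic} (the paper gets the universal family from the identification of $\cM_\tau^Y(2,1)$ with a fine Gieseker moduli space via \cite[Theorem 8.7]{bayer2020desingularization}, rather than from primitivity), injectivity is reduced to Lemma~\ref{projection_functor_coincide_alpha}, and the tangent-space computation produces exactly the paper's short exact sequences $0\to I_{C/S}(H)\to j^*F'_C\to E_C\to 0$ (aCM) and $0\to I_{p/Y}\to \cH^0(j^*F'_C)\to E_C\to 0$ (non-aCM), with $\Hom(K,F)=0$ killed by torsion-freeness, before invoking Lemma~\ref{lag_lem}. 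The only cosmetic difference is your depth justification of $\Hom(I_{p/Y},F)=0$ (which still needs $H^0(F)=0$, immediate from $F\in\Ku(Y)$), a point the paper leaves implicit as well.
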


\begin{proof}
By \cite[Theorem 8.7]{bayer2020desingularization}, the moduli space $\mathcal{M}^Y_{\tau}(2,1)$ is isomorphic to the Gieseker moduli space $M_G(v)$ of stable sheaves of character $v=3-H-\frac{3}{2}L+\frac{1}{2}P$. Then by \cite[Theorem 4.6.5]{HL10}, the moduli space $M_G(v)$ is a fine moduli space. In other words, there is a universal family on $M_G(v)\times Y$. Then by the standard argument of \cite[Theorem 3.9]{li2018twisted} and Proposition~\ref{push-cubic}, the functor $pr_X\circ j_*$ induces a morphism $p_j:\mathcal{M}^Y_{\tau}(2,1)\rightarrow\mathcal{M}^X_{\sigma}(2,1)$. Next, we show that $p_j$ is injective. Indeed, if $p_j([E_C])=p_j([E_{C'}])$, then by Proposition~\ref{push-cubic}, we have $F_C\cong F_{C'}$. As in \cite[Theorem 3.9]{li2018twisted}, this implies that  $\alpha(C)=\alpha(C')$. Then according to  Lemma~\ref{projection_functor_coincide_alpha}, we obtain that  $E_C\cong E_{C'}$, i.e., $[E_C]=[E_{C'}]$. Thus $p_j$ is injective.

Since both moduli spaces are proper, to show that $p_j$ is a closed embedding, we only need to show that $p_j$ induces an injection on tangent spaces at closed points. By Lemma \ref{tgt_inj} and Proposition~\ref{push-cubic}, it suffices to check that $\Hom(j^*F'_C, F)=k$ for any non-trivial self-extension $F$ of $E_C$.

Now we assume that $C\subset Y$ is aCM. Then we have a triangle
\begin{equation}
    \bR_{\oh_X(-H)}(j_*E_C) \to j_*E_C \to \oh_X(-H)^{\oplus 3}[1].
\end{equation}
Since $j^*F'_C$ fits into the triangle
\[j^*F'_C\to \oh_Y^{\oplus 3}\to j^*I_{C/S}(2H)\]
and $j^*I_{C/S}(2H)$ fits into the triangle
\[I_{C/S}(H)[1]\to j^*I_{C/S}(2H)\to I_{C/S}(2H),\]
after taking the long exact sequence of cohomology, we know that $j^*F'_C$ is actually a sheaf on $Y$ and fits into an exact sequence
\[0\to I_{C/S}(H)\to j^*F'_C\to E_C\to 0.\]
Since $I_{C/S}(H)$ is torsion and $F$ is torsion free, we have $\Hom(j^*F'_C, F)\cong\Hom(E_C, F)=k$.  

Now we assume that $C\subset Y$ is non-aCM. By definition of $\bR_{\oh_X(-H)}j_*E_C$, we have a triangle
\[F'_C\cong\bR_{\oh_X(-H)}j_*E_C\to j_*E_C\to \oh_X(-H)^{\oplus 4}[1]\oplus \oh_X(-H)[2].\]
In this case, $\cH^{-1}(j^*F'_C)\cong \oh_Y(-H)$ and we have an exact sequence
\[0\to E_C(-H)\to \oh_Y^{\oplus 4}(-H)\to \cH^{0}(j^*F'_C)\to E_C\to 0.\]
Thus by Lemma \ref{projection_object_cubics}, we have an exact sequence
\begin{equation} \label{seq_12}
    0\to I_{p/Y}\to \cH^{0}(j^*F'_C)\to E_C\to 0.
\end{equation}
After applying $\Hom(-,F)$ to \eqref{seq_12}, we obtain that  \[\Hom(j^*F'_C, F)\cong\Hom(\cH^{0}(j^*F'_C), F)\cong\Hom(E_C, F)=k.\]
This is what we desired. Therefore, $p_j$ is a closed embedding. 

Finally, according to Lemma \ref{lag_lem}, we know that 
the subvariety $\cM_{\tau}^Y(2,1)$ is Lagrangian.
\end{proof}





In \cite{shinder2017geometry}, the authors show that the natural embedding $i':Z_Y\hookrightarrow Z$ realizes $Z_Y$ as a Lagrangian subvariety of $Z$. We can prove this fact by another method. Indeed, by Theorem~\ref{8fold-embed}, after the identification of $\cM_{\tau}^Y(2,1)$ with $Z_Y$ and $\cM_{\sigma}^X(2,1)$ with $Z$, we get an embedding $Z_Y\hookrightarrow Z$ which realizes $Z_Y$ as a Lagrangian subvariety of $Z$. Actually, the two embeddings $p_j$ and $i'$ are compatible.

\begin{corollary}\label{commupatible_LLSvS}
The embedding in Theorem \ref{8fold-embed} is compatible with the one in \cite{shinder2017geometry}, which means that we have a commutative diagram
\[\begin{tikzcd}
	{Z_Y} & {\mathcal{M}_{\tau}^Y(2,1)} \\
	{Z} & {\mathcal{M}_{\sigma}^X(2,1)}
	\arrow["{p_j}", hook, from=1-2, to=2-2]
	\arrow["{i'}"', hook, from=1-1, to=2-1]
	\arrow["\cong"', from=2-1, to=2-2]
	\arrow["\cong"', from=1-1, to=1-2]
\end{tikzcd}\]
\end{corollary}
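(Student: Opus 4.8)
The plan is to verify the commutativity after precomposing with the surjection $\alpha:M_3(Y)\to Z_Y$ coming from the LLSvS construction \cite{LLSvS17}, where the two composites become tautologically equal. First I would record the precise normalization of the three identifications in the square. By Lemma~\ref{projection_functor_coincide_alpha}, the isomorphism $Z_Y\cong\cM_\tau^Y(2,1)$ is the one under which the proper surjective morphism $\pi:M_3(Y)\to\cM_\tau^Y(2,1)$, $C\mapsto[E_C]$, corresponds to $\alpha:M_3(Y)\to Z_Y$; equivalently, it identifies a closed point $\alpha(C)\in Z_Y$ with $[E_C]\in\cM_\tau^Y(2,1)$. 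The isomorphism $Z\cong\cM_\sigma^X(2,1)$ of \cite{li2018twisted} is normalized analogously: the morphism $M_3(X)\to\cM_\sigma^X(2,1)$, $C\mapsto[F'_C]$, factors through the two-step contraction $\alpha:M_3(X)\to Z$ and the induced map $Z\to\cM_\sigma^X(2,1)$ is the asserted isomorphism, so it sends $\alpha(C)$ to $[F'_C]$. Finally, the natural embedding $i':Z_Y\hookrightarrow Z$ of \cite{shinder2017geometry} is by construction induced by the inclusion $M_3(Y)\subset M_3(X)$: the restriction of the contraction $M_3(X)\to Z$ to $M_3(Y)$ has image $Z_Y$ and factors as $M_3(Y)\xrightarrow{\alpha}Z_Y\xrightarrow{i'}Z$.

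With these descriptions I would chase a general closed point $C\in M_3(Y)$, i.e. the point $\alpha(C)\in Z_Y$, around the square. Going across the top and then down the right: $\alpha(C)\mapsto[E_C]\mapsto p_j([E_C])$, and Proposition~\ref{push-cubic} gives $p_j([E_C])=[\pr_X(j_*E_C)]=[F'_C]$. Going down the left and then across the bottom: $\alpha(C)\mapsto i'(\alpha(C))$, which is the image of $C$ under the LLSvS contraction $M_3(X)\to Z$, and this image is sent to $[F'_C]$ by $Z\cong\cM_\sigma^X(2,1)$. Hence both composites $M_3(Y)\to\cM_\sigma^X(2,1)$ carry $C$ to $[F'_C]$, so they agree on all closed points.

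To conclude I would invoke surjectivity of $\alpha:M_3(Y)\to Z_Y$ (it is $\pi$ transported along Lemma~\ref{projection_functor_coincide_alpha}, or the restriction of the LLSvS contraction): every closed point of $Z_Y$ is of the form $\alpha(C)$, so the two morphisms $Z_Y\to\cM_\sigma^X(2,1)$ obtained from the square agree at every closed point; since $\cM_\sigma^X(2,1)$ is separated and $Z_Y$ is a variety over $\C$, they coincide, which is exactly the claimed commutativity. The only genuinely delicate point is bookkeeping rather than mathematics: one must check that the isomorphism $Z\cong\cM_\sigma^X(2,1)$ of \cite{li2018twisted} and the embedding $i'$ of \cite{shinder2017geometry} are normalized as above — literally induced by $C\mapsto[F'_C]$ and by $M_3(Y)\subset M_3(X)$ — rather than differing by an automorphism of $Z$; this amounts to unwinding the constructions in those references. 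Alternatively, one can sidestep even this by checking the identity on the dense open locus of smooth twisted cubics on $Y$, where $E_C$, $F_C$, $F'_C$ and the contraction $\alpha$ all admit explicit descriptions, and then extending by density since every variety occurring is proper.
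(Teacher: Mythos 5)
Your proposal is correct and follows exactly the route the paper intends: the paper offers no written proof of this corollary, presenting it as an immediate consequence of the constructions, and the implicit argument is precisely your point chase — both vertical identifications are normalized on $M_3(Y)$ resp.\ $M_3(X)$ via $C\mapsto[E_C]$ and $C\mapsto[F'_C]$, the key computation $\pr_X(j_*E_C)\cong F'_C$ is Proposition~\ref{push-cubic}, and one concludes by surjectivity of $\alpha$ and separatedness. Your explicit flagging of the normalization of the isomorphism $Z\cong\cM_{\sigma}^X(2,1)$ from \cite{li2018twisted} as the only delicate bookkeeping point is apt and arguably more careful than the paper itself.
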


\section{Proof of Theorem~\ref{main_theorem_2}}

In this section, we prove Theorem~\ref{main_theorem_2}. We start with a result in \cite{JLLZ2021gushelmukai}.

\begin{theorem}[{\cite{JLLZ2021gushelmukai}}]\label{mod_215_epwsurface}
Let $Y$ be a general ordinary GM threefold. Then we have
\[\mathcal{M}_{\tau'}^Y(1,0)\cong\widetilde{Y}^{\geq 2}_{A(Y)^{\perp}},\quad\mathcal{M}_{\tau'}^Y(0,1)\cong \widetilde{Y}^{\geq 2}_{A(Y)},\]
where $\widetilde{Y}^{\geq 2}_{A(Y)^{\perp}}$ is the double dual EPW surface and  $\widetilde{Y}^{\geq 2}_{A(Y)}$ is the double EPW surface associated with $Y$.
\end{theorem}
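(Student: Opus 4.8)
This is established in \cite{JLLZ2021gushelmukai}; the plan I would follow has three stages: first describe the $\tau'$-stable objects of classes $\lambda_1'$ and $\lambda_2'$ in $\Ku(Y)$ geometrically, then promote this to an isomorphism of moduli spaces via universal families, and finally identify the resulting surfaces with the EPW surfaces using classical geometry of Gushel--Mukai threefolds.

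\emph{Stage 1: the stable objects.} One first computes that $\lambda_1' = -1 + 2L$ is the numerical class of $I_{C/Y}[1]$ for a conic $C \subset Y$, and that for a general conic $I_{C/Y}$ in fact already lies in $\Ku(Y)$ (its $\RHom$ against $\oh_Y$ and $\cU_Y^{\vee}$ vanish, using $\RHom(\cU_Y^\vee,\oh_Y)=0$ and $\cU_Y|_C \cong \oh_C(-1)^{\oplus 2}$). I would then show, using that $\tau'$ is the \emph{unique} Serre-invariant stability condition on $\Ku(Y)$ (\cite[Theorem 4.20]{JLLZ2021gushelmukai}, \cite[Theorem 3.2]{FeyzbakhshPertusi2021stab}) together with bounds on $\hom$ and $\ext^1$ among the finitely many classes that can occur as Harder--Narasimhan or Jordan--H\"older factors, that $\pr_Y(I_{C/Y})[1]$ is $\tau'$-stable for every conic $C$ and, conversely, that every $\tau'$-stable object of class $\lambda_1'$ arises this way; this identifies $\mathcal{M}^Y_{\tau'}(1,0)$ set-theoretically with the variety of conics on $Y$, in the spirit of the line and twisted-cubic classifications of \cite{PY20}, \cite{bayer2020desingularization}, \cite{GLZ2021conics}. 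For the class $\lambda_2' = 2 - H + \frac{5}{6}P$, which is represented by a rank-$2$ family rather than by ideals of a linear system of curves, the most economical route is to invoke the Gushel--Mukai duality equivalence $\Ku(Y) \simeq \Ku(Y')$ of Kuznetsov and Perry attached to the dual Lagrangian datum $A(Y)^{\perp}$: it is Serre-invariant, hence sends $\tau'$ to $\tau'$, and interchanges $\lambda_1'$ and $\lambda_2'$ up to sign, so that $\mathcal{M}^Y_{\tau'}(0,1) \cong \mathcal{M}^{Y'}_{\tau'}(1,0)$ and Stage~1 for $Y'$ applies. (One can instead analyze the rank-$2$ objects directly, but this is more laborious.)

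\emph{Stages 2 and 3: isomorphism with EPW surfaces.} The variety of conics on $Y$ carries a universal conic, so the Fourier--Mukai-type functor $\pr_Y$ induces a morphism from it to $\mathcal{M}^Y_{\tau'}(1,0)$ by the standard argument of \cite[Theorem 3.9]{li2018twisted}; this morphism is bijective by Stage~1, and it is an isomorphism once one checks it is injective on tangent spaces, i.e.\ that $\pr_Y$ identifies $\Ext^1(I_{C/Y}, I_{C/Y})$ with $\Ext^1(\pr_Y(I_{C/Y}), \pr_Y(I_{C/Y}))$, which follows from adjunction and the defining triangle of $\pr_Y$ exactly as in Lemma~\ref{tgt_inj}. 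It then remains to quote the classical identification --- due to Logachev, O'Grady and Iliev--Manivel (see \cite[Proposition 5.1]{iliev2011fano} and its proof) --- of the variety of conics on a general Gushel--Mukai threefold with the double dual EPW surface $\widetilde{Y}^{\geq 2}_{A(Y)^{\perp}}$. Applying this to $Y'$, whose Lagrangian datum is $A(Y)^{\perp}$, identifies $\mathcal{M}^{Y'}_{\tau'}(1,0)$ with $\widetilde{Y}^{\geq 2}_{(A(Y)^{\perp})^{\perp}} = \widetilde{Y}^{\geq 2}_{A(Y)}$, the double EPW surface of $Y$, which gives the second isomorphism.

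\emph{Main difficulty.} The hard part is the converse in Stage~1: unlike for cubic threefolds, $\mathcal{M}^Y_{\tau'}(a,b)$ need not be smooth of the expected dimension, so there is no dimension-count shortcut, and one must genuinely exclude $\tau'$-stable complexes not coming from conics and control the degenerate (possibly non-reduced) conics --- this is exactly where the hypothesis that $Y$ be \emph{general} enters, through smoothness and irreducibility of the surface of conics and the absence of pathological conics. A secondary issue is to propagate the \'etale double-cover structure underlying the word ``double'' in ``double dual EPW surface'' correctly through the Fourier--Mukai identification and through Gushel--Mukai duality, including verifying that the duality functor carries the Lagrangian datum of $Y$ to $A(Y)^{\perp}$ with the correct normalization.
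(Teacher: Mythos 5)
Your skeleton (classify $\tau'$-stable objects of class $\lambda_1'$ as projections of ideal sheaves of conics, induce a morphism from the conic parameter space via a universal family, quote the classical EPW identification, and handle $\lambda_2'$ by passing to a period-dual threefold) matches the strategy of the cited reference; the paper itself only records the citation chain $\widetilde{Y}^{\geq 2}_{A(Y)^{\perp}}\cong \cC_m(Y)$ and $\widetilde{Y}^{\geq 2}_{A(Y)}\cong \cC_m(Y_L)$ from Logachev and Debarre--Kuznetsov, where $\cC_m(Y)$ is the \emph{minimal model} of the Fano surface of conics $F_g(Y)$ and $Y_L$ is the line transform (period dual) of $Y$, combined with $\mathcal{M}_{\tau'}^Y(1,0)\cong\cC_m(Y)$ and $\mathcal{M}_{\tau'}^Y(0,1)\cong\cC_m(Y_L)$ from \cite{JLLZ2021gushelmukai}.

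There is, however, a genuine error in your Stage 2: the morphism $F_g(Y)\to\mathcal{M}_{\tau'}^Y(1,0)$ induced by $\pr_Y$ is \emph{not} bijective. For a $\sigma$-conic $C$ the projection $\pr_Y(I_{C/Y})$ is the extension appearing in the triangle $\cU_Y[1]\to\pr_Y(I_{C/Y})\to\cQ_Y^{\vee}$ (the triangle \eqref{sigma_conic_triangle_Y} used later in the paper), and the projections of all $\sigma$-conics are isomorphic; since the $\sigma$-conics sweep out a positive-dimensional locus of $F_g(Y)$, the induced morphism contracts that locus to a point. This is precisely Logachev's contraction $F_g(Y)\to\cC_m(Y)$, and the paper explicitly refers to the resulting ``Pl\"ucker points'' in the proof of Theorem \ref{epw_surface_lag1}. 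Consequently your conclusion that the variety of conics itself is isomorphic to $\mathcal{M}_{\tau'}^Y(1,0)$, and your appeal to a classical identification of the \emph{variety of conics} with $\widetilde{Y}^{\geq 2}_{A(Y)^{\perp}}$, are both off by this contraction: the classical theorem identifies the double dual EPW surface with the minimal model $\cC_m(Y)$, not with $F_g(Y)$. To repair Stage 2 you must show that $\pr_Y$ separates points exactly up to the fibers of $F_g(Y)\to\cC_m(Y)$, check injectivity of the tangent map only away from the contracted locus, and then descend the morphism to $\cC_m(Y)$ before invoking the EPW identification. Your Stage 1 and the duality argument for $(0,1)$ are otherwise consistent with the paper's route through $Y_L$, and the difficulty you flag about possible non-smoothness of $\mathcal{M}_{\tau'}^Y(a,b)$ is a real one.
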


\begin{proof}
From \cite{Log12} and \cite{Debarre2021quadrics}, we know that $\widetilde{Y}^{\geq 2}_{A(Y)^{\perp}}\cong \cC_m(Y)$, where $\cC_m(Y)$ is the minimal model of the Fano surface of conics $F_g(Y)$ on $Y$. At the same time, $\widetilde{Y}^{\geq 2}_{A(Y)}\cong \cC_m(Y_L)$, where $Y_L$ is a line transform (period dual) of $Y$. Then the result follows from \cite[Theorem 7.13, Corollary 10.5]{JLLZ2021gushelmukai}.
\end{proof}

\subsection{$(a,b)=(1,0)$: The double dual EPW sextic}
Let $X$ be a very general ordinary GM fourfold and $Y$ be its smooth hyperplane section. By \cite[Theorem 1.1]{GLZ2021conics}, we have an isomorphism $\cM_{\sigma'}^X(1,0)\cong\widetilde{Y}_{A(X)^{\perp}}$ between the moduli space and the double dual EPW sextic of $X$, which is a hyperkähler fourfold. 

Now we aim to show that the functor $\pr_X\circ j_*:\Ku(Y)\to\Ku(X)$ induces an embedding $$q_j: \cM_{\tau'}^Y(1,0)\hookrightarrow\cM_{\sigma'}^X(1,0).$$ In particular, $\cM_{\tau'}^Y(1,0)$ is Lagrangian in $\cM_{\sigma'}^X(1,0)$, Moreover,  $q_j$ is compatible with the embedding $i'': \widetilde{Y}^{\geq 2}_{A(Y)^{\perp}}\hookrightarrow \widetilde{Y}_{A(X)^{\perp}}$ in \cite[Proposition 5.1]{iliev2011fano} via the identifications above. 

\begin{proposition}\label{pr2-pr}
Let $X$ be a very general ordinary GM fourfold and $j:Y\hookrightarrow X$ be a smooth hyperplane section. Let $C\subset Y$ be a  conic. Then we have
\[\pr_X(j_*\pr_Y(I_{C/Y}))\cong\pr_X(I_{C}),\]where $\pr_Y:=\bL_{\oh_Y}\bL_{\cU^{\vee}_Y}$ and $\mathrm{pr}_X:=\bR_{\cU}\bR_{\oh_X(-H)}\bL_{\oh_X}\bL_{\cU^{\vee}}$ are the projection functors to the Kuznetsov components.
\end{proposition}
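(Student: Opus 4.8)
The plan is to reduce everything to the single fact that the projection functor $\pr_X$ annihilates each of the four exceptional objects $\oh_X(-H)$, $\cU$, $\oh_X$, $\cU^\vee$ occurring in the semiorthogonal decomposition of $D^b(X)$. First I would relate $j_*I_{C/Y}$ to $I_C$: since $C\subset Y\subset X$ and $I_{Y/X}\cong\oh_X(-H)$, there is a short exact sequence
\[0\to\oh_X(-H)\to I_C\to j_*I_{C/Y}\to 0,\]
so applying the triangulated functor $\pr_X$ and using $\pr_X(\oh_X(-H))=0$ yields $\pr_X(I_C)\cong\pr_X(j_*I_{C/Y})$. It therefore suffices to prove $\pr_X(j_*\pr_Y(I_{C/Y}))\cong\pr_X(j_*I_{C/Y})$.

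For this I would unwind the two mutations defining $\pr_Y=\bL_{\oh_Y}\bL_{\cU_Y^\vee}$. Setting $E_1:=\bL_{\cU_Y^\vee}I_{C/Y}$, there are mutation triangles
\[\RHom(\cU_Y^\vee,I_{C/Y})\otimes\cU_Y^\vee\to I_{C/Y}\to E_1,\qquad \RHom(\oh_Y,E_1)\otimes\oh_Y\to E_1\to\pr_Y(I_{C/Y}).\]
Pushing forward by the exact functor $j_*$ and then applying $\pr_X$, the contributions of the two mutation terms become $\RHom(\cU_Y^\vee,I_{C/Y})\otimes\pr_X(j_*\cU_Y^\vee)$ and $\RHom(\oh_Y,E_1)\otimes\pr_X(j_*\oh_Y)$. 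Thus the proposition follows as soon as we establish the two vanishings $\pr_X(j_*\oh_Y)=0$ and $\pr_X(j_*\cU_Y^\vee)=0$.

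The first vanishing is immediate: tensoring the structure sequence of $Y\subset X$ exhibits $j_*\oh_Y$ as the cone of $\oh_X(-H)\to\oh_X$, and $\pr_X$ kills both $\oh_X(-H)$ and $\oh_X$. For the second I would use that the Gushel bundle is compatible with restriction to a hyperplane section, so $\cU_Y\cong j^*\cU$; then the projection formula gives $j_*\cU_Y^\vee\cong\cU^\vee\otimes j_*\oh_Y$, which is the cone of $\cU^\vee(-H)\to\cU^\vee$. The crucial observation is that $\cU^\vee(-H)\cong\cU$: indeed $\cU$ has rank two with $\det\cU\cong\oh_X(-H)$, hence $\cU^\vee\cong\cU(H)$ and so $\cU^\vee(-H)\cong\cU$. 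Therefore $j_*\cU_Y^\vee$ is the cone of a morphism $\cU\to\cU^\vee$, and since $\pr_X$ annihilates both $\cU$ and $\cU^\vee$ we conclude $\pr_X(j_*\cU_Y^\vee)=0$.

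I do not expect a genuine obstacle here: the argument is formal, and in fact uses neither that $X$ is very general nor that $C$ is a conic — it shows $\pr_X(j_*\pr_Y(E))\cong\pr_X(j_*E)$ for any $E\in D^b(Y)$. The only points requiring a little care are the identification $\cU_Y\cong j^*\cU$ (so that the projection formula applies) and the clean twist identity $\cU^\vee(-H)\cong\cU$ coming from $\det\cU\cong\oh_X(-H)$; I would nonetheless keep the hypotheses as stated, since they match the surrounding discussion of $\cM^Y_{\tau'}(1,0)$.
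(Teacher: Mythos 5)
Your argument is correct, and it takes a genuinely different route from the paper. The paper proves the proposition by a case analysis on the three types of conics on an ordinary GM threefold: for $\tau$- and $\rho$-conics it uses $\pr_Y(I_{C/Y})\cong I_{C/Y}$ and cites \cite[Lemma 6.1]{GLZ2021conics} for $\pr_X(j_*I_{C/Y})\cong \pr_X(I_C)$, while for $\sigma$-conics it pushes forward the triangle $\cU_Y[1]\to\pr_Y(I_{C/Y})\to\cQ_Y^\vee$, explicitly computes $\bR_{\oh_X(-H)}j_*\cU_Y\cong\cU\oplus\cQ(-H)$ and $\bR_{\oh_X(-H)}j_*\cQ_Y^\vee\cong\cU\oplus\cQ^\vee$, and then identifies the result with $\pr_X(I_C)$ by showing the relevant extension space $\Hom(\cQ^\vee,\mathbb{D}(I_q(H))[2])$ is one-dimensional. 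Your proof replaces all of this by the formal observation that $j_*$ carries the exceptional objects $\oh_Y,\cU_Y^\vee$ of $\D^b(Y)$ into $\langle\oh_X(-H),\cU,\oh_X,\cU^\vee\rangle$ (using $j_*\oh_Y=\mathrm{cone}(\oh_X(-H)\to\oh_X)$ and, via $\cU_Y\cong j^*\cU$, the projection formula and $\cU^\vee(-H)\cong\cU$ from $\det\cU\cong\oh_X(-H)$, that $j_*\cU_Y^\vee=\mathrm{cone}(\cU\to\cU^\vee)$), so that $\pr_X\circ j_*\circ\pr_Y\cong\pr_X\circ j_*$ on objects; combined with the sequence $0\to\oh_X(-H)\to I_C\to j_*I_{C/Y}\to 0$ this gives the claim uniformly, with no genericity assumption and for arbitrary $E\in\D^b(Y)$ in place of $I_{C/Y}$. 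What your approach buys is brevity and generality; what the paper's computation buys is the explicit triangle $\mathbb{D}(I_q(H))[1]\to\pr_X(j_*\pr_Y(I_{C/Y}))\to\cQ^\vee$ for $\sigma$-conics, which is not a throwaway: it is reused verbatim in Case 2 of the proof of Theorem \ref{epw_surface_lag1} to control $\Hom(j^*\pr_X(j_*\pr_Y(I_{C/Y})),F)$. So if you adopt your shorter proof you would still need to extract that explicit description separately for the later argument.
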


\begin{proof}
There are three types of conics on an ordinary GM threefold: $\tau$-conics, $\rho$-conics and $\sigma$-conics. If $C$ is a $\tau$-conic or a $\rho$-conic, then $\pr_Y(I_{C/Y})\cong I_{C/Y}$ and the statement follows from \cite[Lemma 6.1]{GLZ2021conics}. If $C$ is a $\sigma$-conic, then by \cite[Proposition 7.2]{JLLZ2021gushelmukai}, we have a triangle in~$D^b(Y)$
\[\cU_Y[1]\to \pr_Y(I_{C/Y})\to \cQ^{\vee}_Y,\]
then we obtain a triangle in $\D^b(X)$
\[j_*\cU_Y[1]\to j_*\pr_Y(I_{C/Y})\to j_*\cQ^{\vee}_Y.\]
As in \cite[Proposition 6.8]{GLZ2021conics}, there are  isomorphisms $$\bR_{\oh_X(-H)}j_*\cU_Y\cong \cU\oplus \cQ(-H),\quad\bR_{\oh_X(-H)}j_*\cQ_Y^{\vee}\cong \cU\oplus \cQ^{\vee},$$ hence we get a triangle
\[\cU[1]\oplus \cQ(-H)[1]\to \bR_{\oh_X(-H)}(j_*\pr_Y(I_{C/Y}))\to \cU\oplus \cQ^{\vee}.\]
Finally, applying $\bR_{\cU}$ to this triangle, we obtain
\[\bR_{\cU}(\cQ(-H))[1]\to \pr_X(j_*\pr_Y(I_{C/Y}))\to \bR_{\cU}\cQ^{\vee}.\]
Since $\RHom(\cQ(-H), \cU)=k[0]$ and $\RHom(\cQ^{\vee}, \cU)=0$, we obtain the triangles
\[ \cQ(-H)\to \cU\to \bR_{\cU}(\cQ(-H))[1]\]
and
\[\bR_{\cU}(\cQ(-H))[1]\to \pr_X(j_*\pr_Y(I_{C/Y}))\to \cQ^{\vee}.\]

By \cite[Proposition 6.4]{GLZ2021conics}, there is a short exact sequence
$$0\rightarrow\cU\rightarrow\cQ^{\vee}\rightarrow I_q\rightarrow 0,$$
where $q$ is the unique $\sigma$-quadric surface in $X$. 
Thus we get 
$$\mathbb{D}(I_q(H))\rightarrow\cQ(-H)\rightarrow\cU.$$
Then $\mathrm{pr}_X(j_*\mathrm{pr}_Y(I_{C/Y}))$ is given by the triangle
$$\mathbb{D}(I_q(H))[1]\rightarrow\mathrm{pr}_X(j_*\mathrm{pr}_Y(I_{C/Y}))\rightarrow\cQ^{\vee}.$$
Note that $\Hom(\cQ^{\vee}, \mathbb{D}(I_q(H))[2])\cong\Hom(I_q(H), \cQ[2])=k$. Then the result follows from the triangle 
$\mathbb{D}(I_q(H))[1]\rightarrow\pr_X(I_{C})\rightarrow\cQ^{\vee}$
as in \cite[Proposition 6.6]{GLZ2021conics}. 
\end{proof}

Recall that the Serre functor of $\Ku(Y)$ is $\iota_Y[2]$, where $\iota_Y$ is an involution on $\Ku(Y)$. There is also an involution $T:=\bL_{\oh_X}\circ \mathbb{D}$ on $\Ku(X)$.

\begin{lemma} \label{invo_3fold}
Let $C$ be a conic on $Y$. Then we have
\[T(\pr_X(j_*\iota_Y\pr_Y(I_{C/Y})))\cong \pr_X(j_*\pr_Y(I_{C/Y})).\]
\end{lemma}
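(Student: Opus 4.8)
The plan is to deduce the lemma from the functorial identity
\[T\circ(\pr_X\circ j_*)\circ\iota_Y\;\cong\;\pr_X\circ j_*\qquad\text{on }\Ku(Y)\]
(at least on the objects $\pr_Y(I_{C/Y})$), since evaluating at $E=\pr_Y(I_{C/Y})$ gives precisely $T(\pr_X(j_*\iota_Y E))\cong\pr_X(j_*E)$, which is the assertion.

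To compare the two sides I would first rewrite the ``square roots'' $\iota_Y$ and $T$. Recall that $\Ku(X)$ is a $K3$ category, so $S_{\Ku(X)}=[2]$, while $\iota_Y=S_{\Ku(Y)}[-2]$; from $D^b(Y)=\langle\Ku(Y),\oh_Y,\cU_Y^{\vee}\rangle$ and $\omega_Y\cong\oh_Y(-H)$ one has $S_{\Ku(Y)}^{-1}=\bL_{\oh_Y}\bL_{\cU_Y^{\vee}}\bigl((-\otimes\oh_Y(H))[-3]\bigr)|_{\Ku(Y)}$, and applying $\mathbb{D}_Y$ (using $\mathbb{D}_Y\circ\bL_{E}\cong\bR_{\mathbb{D}_Y E}\circ\mathbb{D}_Y$ together with $\mathbb{D}_Y\oh_Y\cong\oh_Y$, $\mathbb{D}_Y\cU_Y^{\vee}\cong\cU_Y$ and $\cU_Y^{\vee}\cong\cU_Y(H)$) expresses $\mathbb{D}_Y\circ\iota_Y$ through right mutations by $\oh_Y(H),\cU_Y^{\vee}$ and a twist. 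On the fourfold side, Grothendieck--Verdier duality for the divisorial embedding gives $j^{!}\cong j^{*}(-\otimes\oh_Y(H))[-1]$, hence
\[\mathbb{D}_X\circ j_*\;\cong\;j_*\bigl(\mathbb{D}_Y(-)\otimes\oh_Y(H)\bigr)[-1].\]
Substituting the formula for $\iota_Y$, the right mutations and twists cancel up to objects lying in the exceptional part $\langle\oh_Y,\cU_Y^{\vee}\rangle$ of the decomposition of $D^b(Y)$; and $\pr_X\circ j_*$ kills those, because $j_*\oh_Y$ and $j_*\cU_Y^{\vee}$ lie in $\langle\oh_X(-H),\cU,\oh_X,\cU^{\vee}\rangle$ (here one uses $\cU_X^{\vee}(-H)\cong\cU_X$), on which $\pr_X$ vanishes.

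It then remains to commute $\mathbb{D}_X$ through $\pr_X=\bR_{\cU}\bR_{\oh_X(-H)}\bL_{\oh_X}\bL_{\cU^{\vee}}$ and absorb the outer left mutation $\bL_{\oh_X}$ coming from $T=\bL_{\oh_X}\circ\mathbb{D}_X$. Since $\mathbb{D}_X$ reverses the semiorthogonal decomposition of $D^b(X)$ and interchanges $\{\oh_X,\cU^{\vee},\oh_X(H),\cU^{\vee}(H)\}$ with $\{\oh_X,\cU,\oh_X(-H),\cU(-H)\}$, it intertwines $\pr_X$ with the projection attached to a cyclically rotated decomposition of $D^b(X)$, and $\bL_{\oh_X}$ is precisely the mutation needed to bring that rotated projection back to $\pr_X$. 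Feeding in the conic $C$ and invoking Proposition~\ref{pr2-pr}, which already gives $\pr_X(j_*\pr_Y(I_{C/Y}))\cong\pr_X(I_C)$, then finishes the proof.

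I expect the main obstacle to be bookkeeping rather than conceptual: one must verify that, after all the mutation functors ($\bL_{\oh_X},\bL_{\cU^{\vee}},\bR_{\oh_X(-H)},\bR_{\cU}$ on $X$ and $\bL_{\oh_Y},\bL_{\cU_Y^{\vee}}$ on $Y$) and the $\oh(\pm H)$-twists are composed, the residual autoequivalence of $\Ku(X)$ is exactly $T$ and not some other rotation functor. As an independent cross-check one can instead proceed case-by-case, using the description of $\iota_Y$ on conic projections from \cite{JLLZ2021gushelmukai} (distinguishing $\tau$-, $\rho$- and $\sigma$-conics) and the behaviour of $T$ on $\pr_X(I_C)$ from \cite{GLZ2021conics}; in the $\sigma$-conic case this comes down to matching the triangle $\cU_Y[1]\to\pr_Y(I_{C/Y})\to\cQ_Y^{\vee}$ with its $\mathbb{D}_X$-dual, exactly as in the uniqueness-of-extension step of the proof of Proposition~\ref{pr2-pr}.
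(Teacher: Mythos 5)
Your ``independent cross-check'' is in fact the paper's entire proof: the authors dispose of this lemma in one line by citing \cite[Proposition 7.3]{JLLZ2021gushelmukai} (the action of $\iota_Y$ on projections of conics, separated into $\tau$-, $\rho$- and $\sigma$-conics) and \cite[Proposition 8.6]{GLZ2021conics} (the action of $T$ on $\pr_X(I_C)$), matched up via Proposition~\ref{pr2-pr}. So the case-by-case route you sketch at the end is exactly the intended argument, and it is the only one actually needed, since the lemma is stated only for conics.

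Your primary route --- proving the functorial identity $T\circ(\pr_X\circ j_*)\circ\iota_Y\cong\pr_X\circ j_*$ --- is genuinely different and, if completed, strictly stronger. The preparatory formulas are all correct: $S_{\Ku(Y)}^{-1}=\bL_{\oh_Y}\bL_{\cU_Y^{\vee}}\bigl((-)\otimes\oh_Y(H)\bigr)[-3]$, the Grothendieck--Verdier identity $\mathbb{D}_X\circ j_*\cong j_*\bigl(\mathbb{D}_Y(-)\otimes\oh_Y(H)\bigr)[-1]$ (using $\omega_{Y/X}\cong\oh_Y(H)$), and $\cU^{\vee}(-H)\cong\cU$. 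But the decisive step --- that after conjugating by $\mathbb{D}_X$ the projection $\pr_X$ is intertwined with the projection onto $\mathbb{D}(\Ku(X))$ in the reversed decomposition, and that the comparison equivalence back to $\Ku(X)$ is \emph{exactly} $\bL_{\oh_X}$ rather than $\bL_{\oh_X}$ composed with further mutations through $\cU$ or $\cU^{\vee}$, a twist, or a shift --- is asserted, not verified, and you flag it yourself as ``bookkeeping.'' That identification is the entire content of the lemma in disguise: it is precisely what makes $T=\bL_{\oh_X}\circ\mathbb{D}$ (as opposed to some other rotation of $\Ku(X)$) the correct involution, and pinning it down is a result in its own right (this kind of functorial compatibility is what the sequel \cite{FLZlag2022} is devoted to), not a routine cancellation. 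As written, the main argument therefore has a genuine gap at its key step; to make the proof self-contained at the level of this paper you should either carry out that mutation computation in full or simply fall back on the two cited propositions, as the authors do.
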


\begin{proof}
This follows from \cite[Proposition 7.3]{JLLZ2021gushelmukai} and \cite[Proposition 8.6]{GLZ2021conics}.
\end{proof}

Let $[E]$ be an object in the moduli space $\cM_{\tau'}^Y(1,0)$. By \cite[Theorem 7.11, Theorem 9.5]{JLLZ2021gushelmukai}, it is either in the form of $I_{C/Y}$ or given by the triangle  

\begin{equation} \label{sigma_conic_triangle_Y}
    \cU_Y[1]\rightarrow\mathrm{pr}_Y(I_{C/Y})\rightarrow\cQ^{\vee}_Y.
\end{equation}
Moreover, its image under the functor $\pr_X\circ j_*$ is also $\sigma'$-stable by Proposition~\ref{pr2-pr} and \cite[Theorem 7.1]{GLZ2021conics}. Note that the moduli space $\cM_{\tau'}^Y(1,0)$ admits a universal family (cf. \cite[Proposition 10.1]{JLLZ2021gushelmukai}). Then by the standard argument of \cite[Theorem 3.9]{li2018twisted}, the functor $\pr_X\circ j_*$ induces a morphism $q_j:\cM_{\tau'}^Y(1,0)\to\cM_{\sigma'}^X(1,0)$. 

To further prove that $q_j$ is an embedding, we begin with two lemmas.

\begin{lemma} \label{conic_not_intersect}
Let $C$ and $C'$ be two $\tau$-conics on $Y$. If  $\pr_X(I_C)\cong \pr_X(I_{C'})$, then the planes $\langle C \rangle $ and $\langle C' \rangle$ intersect at a single point.
\end{lemma}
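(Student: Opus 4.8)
The plan is to reduce the statement to an intersection-theoretic computation on the GM threefold $Y$, using the structure of $\tau$-conics and the behaviour of the projection functor $\pr_X$ on ideal sheaves of conics. First I would recall that for a $\tau$-conic $C$ one has $\pr_Y(I_{C/Y}) \cong I_{C/Y}$, so by \cite[Lemma 6.1]{GLZ2021conics} the object $\pr_X(I_C)$ is obtained from $I_{C/X}$ by an explicit sequence of mutations; concretely it sits in a controlled triangle relating it to $I_{C/X}$ and copies of the exceptional objects. Assuming $\pr_X(I_C) \cong \pr_X(I_{C'})$, I would apply the adjunction/fully-faithfulness properties of the relevant mutation functors to transfer this isomorphism back to a relation between $I_{C/X}$ and $I_{C'/X}$ in $\D^b(X)$, or more usefully to a statement about $\Hom$-spaces such as $\Hom(I_{C/X}, I_{C'/X}) \neq 0$ (after twisting appropriately), in the spirit of the arguments used in Proposition~\ref{push-cubic} and Proposition~\ref{pr2-pr}.

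Next I would translate the non-vanishing $\Hom$ into geometry. A $\tau$-conic $C$ spans a plane $\langle C\rangle$, and $C = \langle C\rangle \cap Y$ (scheme-theoretically, for $\tau$-conics) or at least $C$ is cut out in $\langle C\rangle$ by the quadrics through it; the key point is that $\langle C\rangle$ is recovered from $C$, hence from the object $\pr_X(I_C)$ once we know the reconstruction is faithful enough. The expected dichotomy is: either $\langle C\rangle = \langle C'\rangle$, in which case $C$ and $C'$ would be two conics in the same plane section of $Y$ — but a GM threefold is an intersection of quadrics in $\PP^6$ and a general plane meets it in a conic with no moduli, forcing $C = C'$ and contradicting the hypothesis that we have two distinct conics — or $\langle C\rangle \neq \langle C'\rangle$. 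In the latter case two distinct planes in $\PP^6$ generically meet in a single point or are disjoint, and I would rule out the disjoint case using the $\Hom$-non-vanishing: a nonzero map between the (twisted) ideal sheaves, or between their projections, forces the supports to interact, which for planes means $\langle C\rangle \cap \langle C'\rangle \neq \emptyset$. Combining, $\langle C\rangle$ and $\langle C'\rangle$ meet in exactly one point.

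The main obstacle I anticipate is the bookkeeping in the first step: making precise how much of the isomorphism $\pr_X(I_C) \cong \pr_X(I_{C'})$ descends to a usable statement about $I_{C/X}$ and $I_{C'/X}$, since $\pr_X = \bR_{\cU}\bR_{\oh_X(-H)}\bL_{\oh_X}\bL_{\cU^\vee}$ is a composition of four mutations and one must check that the extra exceptional-object summands introduced at each stage cannot conspire to identify the projections of two genuinely different conics. I would handle this by carefully computing the relevant $\RHom$'s between $I_{C/X}$ and $\oh_X, \oh_X(-H), \cU, \cU^\vee$ (these are available from the GM geometry and the cited results in \cite{GLZ2021conics}), so that each mutation triangle is pinned down up to the unique nontrivial extension, exactly as in the proofs of Proposition~\ref{push-cubic} and Proposition~\ref{pr2-pr}. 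Once the reconstruction of $\langle C\rangle$ from $\pr_X(I_C)$ is established, the geometric conclusion about the intersection of the two planes is immediate; I would also invoke the very-general hypothesis on $X$ to exclude degenerate plane configurations if needed.
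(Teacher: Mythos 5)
The core of your argument is the dichotomy ``two distinct planes in $\PP^6$ generically meet in a single point or are disjoint,'' and you propose to rule out only the disjoint case. This misses the case that actually matters: two distinct planes in $\PP^6$ can also meet along a line, and excluding precisely this possibility is the entire point of the lemma --- in the proof of Theorem~\ref{epw_surface_lag1} the lemma is invoked exactly to contradict $\PP^1\subset \langle C\rangle\cap\langle C'\rangle$. An argument that only distinguishes nonempty from empty intersection cannot prove the statement. Moreover, the mechanism you propose for forcing the intersection to be nonempty --- a nonzero map between (twisted) ideal sheaves ``forces the supports to interact'' --- does not work for ideal sheaves of curves: $I_{C/X}$ and $I_{C'/X}$ are rank-one torsion-free sheaves whose support is all of $X$, and in fact $\Hom(I_{C/X},I_{C'/X})=0$ for distinct conics, since a nonzero map would factor the canonical inclusion into $\oh_X$ and force $C'\subseteq C$. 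So neither half of your geometric step goes through as stated.

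The paper's proof instead imports the precise positional content of the hypothesis from \cite[Proposition 8.1]{GLZ2021conics}: the isomorphism $\pr_X(I_C)\cong\pr_X(I_{C'})$ forces $C$ and $C'$ to lie on a common del Pezzo surface $\Sigma\subset X$, and forces the planes $\langle C\rangle$ and $\langle C'\rangle$ to lie on a singular quadric threefold $Q_{C,V_4}$ which is a cone over a smooth quadric surface $S'$. Planes in this cone correspond to lines on $S'$, and by \cite[Proposition 4.9]{iliev2011fano} together with \cite[Proposition 8.1]{GLZ2021conics} the two planes correspond to two disjoint lines in the same ruling of $S'$; hence they meet exactly at the cone vertex, a single point. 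This concrete information is what pins the intersection down; your reconstruction-of-the-plane strategy, even if completed, gives no control on the relative position of the two planes beyond genericity, which is insufficient here.
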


\begin{proof}
By \cite[Proposition 8.1]{GLZ2021conics}, $C$ and $C'$ are contained in a same del Pezzo surface $\Sigma\subset X$. Furthermore, the generating planes $\langle C \rangle$ and $\langle C' \rangle$ belong to a singular quadric threefold $Q_{C, V_4}$, where $Q_{C, V_4}$ is a cone over a smooth quadratic surface $S'$. Then all planes in $Q_{C, V_4}$ correspond to  lines on $S'$, which has two rulings. By \cite[Proposition 4.9]{iliev2011fano} and \cite[Proposition 8.1]{GLZ2021conics}, the isomorphism $\pr_X(I_C)\cong \pr_X(I_{C'})$ implies that $\langle C \rangle$ and $\langle C' \rangle$ correspond to two disjoint lines in $S'$ but in the same ruling. Hence $\langle C \rangle $ and $\langle C' \rangle$ only intersect at the cone point of $Q_{C, V_4}$.
\end{proof}

\begin{lemma} \label{ext1=0}
Let $C$ be a $\tau$-conic on $Y$ and $D$ be the zero locus of a regular section of $\cU^{\vee}_Y$. Then we have $\Ext^1(I_{C/Y}, I_{D/Y})=0$.
\end{lemma}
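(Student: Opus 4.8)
The plan is to resolve $I_{D/Y}$ by the Koszul complex attached to the regular section $s\in H^0(Y,\cU_Y^{\vee})$ of the rank-two bundle $\cU_Y^{\vee}$, and then to reduce the vanishing to a handful of standard cohomology computations on the Fano threefold $Y$. Since $s$ is regular, $D$ is of pure codimension two and the Koszul complex of $s$ is exact:
\[
0\to \wedge^{2}\cU_Y\to \cU_Y\to \oh_Y\to \oh_D\to 0 .
\]
Because $\wedge^{2}\cU_Y=\det\cU_Y\cong\oh_Y(-H)$ (equivalently $\det\cU_Y^{\vee}\cong\oh_Y(H)$), splitting this complex gives a short exact sequence
\[
0\to \oh_Y(-H)\to \cU_Y\to I_{D/Y}\to 0 .
\]
Applying $\RHom(I_{C/Y},-)$ to the associated exact triangle produces a long exact sequence
\[
\cdots\to \Ext^1(I_{C/Y},\cU_Y)\to \Ext^1(I_{C/Y},I_{D/Y})\to \Ext^2(I_{C/Y},\oh_Y(-H))\to\cdots ,
\]
so it suffices to prove $\Ext^1(I_{C/Y},\cU_Y)=0$ and $\Ext^2(I_{C/Y},\oh_Y(-H))=0$.

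For the second vanishing I would use that $Y$ is a Fano threefold of index one, so $\oh_Y(-H)\cong\omega_Y$ and Serre duality gives $\Ext^{i}(I_{C/Y},\oh_Y(-H))\cong H^{3-i}(Y,I_{C/Y})^{\vee}$. Feeding $0\to I_{C/Y}\to\oh_Y\to\oh_C\to 0$ into cohomology, and using $H^{>0}(Y,\oh_Y)=0$, the fact that the restriction $H^{0}(Y,\oh_Y)\to H^{0}(C,\oh_C)$ is an isomorphism, and $H^{1}(C,\oh_C)=0$ (a conic has arithmetic genus zero), one gets $H^{\bullet}(Y,I_{C/Y})=0$, hence in fact $\Ext^{\bullet}(I_{C/Y},\oh_Y(-H))=0$ in every degree.

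For the first vanishing I would apply $\RHom(-,\cU_Y)$ to the same short exact sequence. By the semiorthogonality in $D^b(Y)=\langle\Ku(Y),\oh_Y,\cU_Y^{\vee}\rangle$ we have $H^{\bullet}(Y,\cU_Y)=\RHom(\cU_Y^{\vee},\oh_Y)=0$, so $\Ext^1(I_{C/Y},\cU_Y)\cong\Ext^2_Y(\oh_C,\cU_Y)$, and Serre duality identifies this with
\[
\Ext^1_Y(\cU_Y,\oh_C(-H))^{\vee}\cong H^1\!\big(C,\ \cU_Y^{\vee}|_C\otimes\oh_C(-H)\big)^{\vee}.
\]
This is exactly the place where the hypothesis that $C$ is a $\tau$-conic enters: for a $\tau$-conic the restriction $\cU_Y^{\vee}|_C$ is the balanced bundle $\oh_C(1)^{\oplus 2}$ (cf. the classification of conics on GM threefolds by the splitting type of $\cU_Y^{\vee}$ in \cite{GLZ2021conics,JLLZ2021gushelmukai}), so tensoring with $\oh_C(-H)\cong\oh_{\PP^1}(-2)$ gives $\oh_{\PP^1}(-1)^{\oplus 2}$, which has vanishing $H^1$. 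Combining the two vanishings with the long exact sequence above yields $\Ext^1(I_{C/Y},I_{D/Y})=0$.

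The main (and only mildly delicate) obstacle is this last step for degenerate $\tau$-conics, where $C$ is a double line or a union of two lines and ``the splitting type of $\cU_Y^{\vee}|_C$'' is not literally available; there one instead invokes the cohomological characterisation of $\tau$-conics from \cite{GLZ2021conics,JLLZ2021gushelmukai} to conclude $H^1(C,\cU_Y^{\vee}|_C\otimes\oh_C(-H))=0$, i.e. $\Ext^2_Y(\oh_C,\cU_Y)=0$, directly. Everything else is formal homological algebra together with the standard vanishing theorems on the Fano threefold $Y$, and one should note that the statement genuinely fails for $\rho$- and $\sigma$-conics, since for those $\cU_Y^{\vee}|_C\cong\oh_C\oplus\oh_C(2)$ and the relevant $H^1$ is nonzero.
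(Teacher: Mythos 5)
Your proposal is correct and follows essentially the same route as the paper: the Koszul short exact sequence $0\to\oh_Y(-H)\to\cU_Y\to I_{D/Y}\to 0$, application of $\Hom(I_{C/Y},-)$, and Serre duality. The only difference is that where the paper outsources the final vanishing to \cite[Lemma 6.2]{JLLZ2021gushelmukai}, you carry it out explicitly via the splitting type $\cU_Y^{\vee}|_C\cong\oh_C(1)^{\oplus 2}$ of a $\tau$-conic.
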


\begin{proof}
From the Koszul resolution, we have an exact sequence 
\[0\to \oh_Y(-H)\to \cU_Y\to I_{D/Y}\to 0.\]
Then the result follows from applying $\Hom(I_{C/Y}, -)$ to this exact sequence, Serre duality and \cite[Lemma 6.2]{JLLZ2021gushelmukai}.
\end{proof}

\begin{theorem}\label{epw_surface_lag1}
Let $X$ be a very general ordinary GM fourfold and $j:Y\hookrightarrow X$ be a smooth hyperplane section. Then the functor $\pr_X \circ j_*$ induces a closed embedding
\[q_j: \cM_{\tau'}^Y(1,0)\hookrightarrow\cM_{\sigma'}^X(1,0),\]
realizing $\cM_{\tau'}^Y(1,0)$ as a Lagrangian subvariety of $\cM_{\sigma'}^X(1,0)$. 
\end{theorem}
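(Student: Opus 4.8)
The plan is to run the same three-step argument as in the proofs of Theorems~\ref{Fano_variety_lines} and~\ref{8fold-embed}. The morphism $q_j$ has already been constructed from the universal family on $\cM_{\tau'}^Y(1,0)$; since both moduli spaces are projective, it is enough to check that $q_j$ is injective on closed points and that its differential is injective, for then $q_j$ is a closed immersion, and Lemma~\ref{lag_lem} promotes this to the Lagrangian statement. Its hypotheses are met because $\cM_{\tau'}^Y(1,0)\cong\widetilde{Y}^{\geq 2}_{A(Y)^{\perp}}$ (Theorem~\ref{mod_215_epwsurface}) is a smooth surface, hence generically smooth of the expected dimension $a^2+b^2+1=2=\tfrac12\dim\cM_{\sigma'}^X(1,0)$.

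For injectivity on closed points, I would use Proposition~\ref{pr2-pr} to rewrite $q_j([\pr_Y(I_{C/Y})])=[\pr_X(I_C)]$, reducing the claim to: $\pr_X(I_C)\cong\pr_X(I_{C'})$ implies $\pr_Y(I_{C/Y})\cong\pr_Y(I_{C'/Y})$, i.e. $C$ and $C'$ define the same point of $\cM_{\tau'}^Y(1,0)$. I would split by the type of conic. For $\tau$- and $\rho$-conics, $\pr_Y(I_{C/Y})=I_{C/Y}$, and Lemma~\ref{conic_not_intersect} together with \cite[Proposition~4.9]{iliev2011fano} shows that the pairs $C,C'$ identified by $C\mapsto\pr_X(I_C)$ are exactly the fibers of the contraction $F_g(Y)\to\cC_m(Y)\cong\cM_{\tau'}^Y(1,0)$; the finitely many $\sigma$-conics are handled using the triangle~\eqref{sigma_conic_triangle_Y} and the relation of Lemma~\ref{invo_3fold}. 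The same computation identifies $q_j$, under the isomorphisms of Theorem~\ref{mod_215_epwsurface} and \cite[Theorem~1.1]{GLZ2021conics}, with the natural embedding $i''$ of \cite[Proposition~5.1]{iliev2011fano}, which also yields the asserted compatibility.

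For the differential I would invoke Lemma~\ref{tgt_inj} with $E=\pr_Y(I_{C/Y})$ and $\pr_X(j_*E)\cong\pr_X(I_C)$ (Proposition~\ref{pr2-pr}), so that everything reduces to $\Hom(j^*\pr_X(I_C),F)=k$ for every non-split self-extension $F$ of $E$. Following Proposition~\ref{push-cubic} and the tangent computation in Theorem~\ref{8fold-embed}, I would expand $\pr_X=\bR_{\cU}\bR_{\oh_X(-H)}\bL_{\oh_X}\bL_{\cU^{\vee}}$, pull the defining triangles back along $j^*$, and extract the cohomology sheaves of $j^*\pr_X(I_C)$ from the resulting mutation triangles. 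The expected picture is that $j^*\pr_X(I_C)$ differs from $E$ only by contributions that are either torsion (supported on $C$) or built from $\oh_Y(-H)$, $\cU_Y$ and the $\sigma$-quadric — in the $\sigma$-conic case also producing a nonzero $\cH^{-1}$ — none of which admits a nonzero map to the torsion-free quotient of $F$ (here Lemma~\ref{ext1=0} and Serre duality give the vanishing), whence $\Hom(j^*\pr_X(I_C),F)\cong\Hom(E,F)=k$ since $F$ is a non-trivial self-extension of the simple object $E$. This gives injectivity of the differential, hence the closed embedding, and Lemma~\ref{lag_lem} concludes.

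I expect the main obstacle to be the tangent computation for $\sigma$-conics: there $E=\pr_Y(I_{C/Y})$ is the nontrivial extension of $\cQ^{\vee}_Y$ by $\cU_Y[1]$ in~\eqref{sigma_conic_triangle_Y} rather than an ideal sheaf, $j^*\pr_X(I_C)$ genuinely has $\cH^{-1}\neq 0$, and one must track carefully — as in the proof of Proposition~\ref{pr2-pr} and in \cite[Propositions~6.6 and~6.8]{GLZ2021conics} — that the extra sheaves introduced by $j^*$ and by the mutations $\bR_{\cU}$ and $\bR_{\oh_X(-H)}$ contribute nothing to $\Hom(j^*\pr_X(I_C),F)$. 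A secondary delicate point is pinning down precisely which pairs of conics are contracted, i.e. matching the exceptional locus of $F_g(Y)\to\cC_m(Y)$ with the fibers of $C\mapsto\pr_X(I_C)$; but this is essentially the EPW-surface geometry already worked out in \cite{iliev2011fano} and \cite{JLLZ2021gushelmukai}.
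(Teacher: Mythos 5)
Your overall strategy coincides with the paper's: construct $q_j$ from the universal family, check injectivity on closed points via Proposition~\ref{pr2-pr} and Lemma~\ref{conic_not_intersect}, check injectivity of the differential via Lemma~\ref{tgt_inj}, and conclude with Lemma~\ref{lag_lem}. The point-injectivity step and the reduction of the $\rho$-conic tangent computation to the $\sigma$-conic one via $\iota_Y$ and $T$ (Lemma~\ref{invo_3fold}) are also as in the paper.

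However, there is a genuine gap in your tangent-space argument for $\tau$-conics, and you have misjudged where the difficulty lies. Your ``expected picture'' is that $j^*\pr_X(I_C)$ differs from $E=I_{C/Y}$ only by torsion sheaves or by pieces built from $\oh_Y(-H)$, $\cU_Y$, etc., none of which maps nontrivially to the torsion-free quotient of $F$, so that $\Hom(j^*\pr_X(I_C),F)\cong\Hom(E,F)=k$ falls out formally. This is false in the $\tau$-conic case: from \cite[Lemma 8.5]{GLZ2021conics} one gets
\[0\to I_{D/Y}\to j^*\pr_X(j_*I_{C/Y})\to j^*I_C\to 0,\]
where $D=\Sigma\cap Y$ is a degree-four elliptic curve containing $C$. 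The subsheaf $I_{D/Y}$ is torsion-free of rank one and admits the nonzero inclusion $I_{D/Y}\hookrightarrow I_{C/Y}\hookrightarrow F$, so the naive vanishing argument does not close the long exact sequence; a priori $\hom(j^*\pr_X(j_*I_{C/Y}),F)$ could be $2$. The paper has to rule this out by a delicate contradiction argument: assuming $\hom=2$, a diagram chase produces a subsheaf $G\cong I_{D/Y}\oplus I_{C/Y}\subset F$ (using Lemma~\ref{ext1=0} to split the extension) whose $I_{D/Y}$ summand factors through $I_{C/Y}$, forcing $\hom(I_{C/Y},F)\geq 2$ and contradicting simplicity of the extension. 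This is the bulk of the proof, whereas the $\sigma$-conic case you flag as the main obstacle reduces comparatively cleanly to $\Hom(\cU_Y^{\vee},I_{C'/Y}(H))=k$ via \cite[Lemma 6.2]{JLLZ2021gushelmukai}. Without an argument handling the $I_{D/Y}$ contribution, your proof of injectivity of the differential is incomplete.
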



\begin{proof}
By Proposition~\ref{pr2-pr} and the discussion above, we know that the functor $\pr_X\circ j_*$ induces a morphism $q_j:\cM_{\tau'}^Y(1,0)\to \cM_{\sigma'}^X(1,0)$. Firstly, we show that $q_j$ is injective. By the compatibility in Proposition \ref{pr2-pr}, $q_j$ maps the \text{Plücker} points, which are images under the contraction of $\sigma$-conics and the unique $\rho$-conic on $Y$, to the corresponding ones in $\cM_{\sigma'}^X(1,0)$. Then it is sufficient to show that $\pr_X\circ j_*$ is injective on the locus of the image under the projection of $\tau$-conics. Let~$C$ be a $\tau$-conic. Then by Proposition~\ref{pr2-pr}, $\pr_X(j_*I_{C/Y})\cong\pr_X(I_{C})$. For any two $\tau$-conics $C$ and~$C'$ on $Y$,  $\pr_X(j_*I_{C/Y})\cong\pr_X(j_*I_{C'/Y})$ implies that $\pr_X(I_C)\cong \pr_X(I_{C'})$. Furthermore, $C$ and $C'$ are both in a del Pezzo surface $\Sigma\subset X$ by \cite[Proposition 8.1]{GLZ2021conics}. Now let $D:=\Sigma\cap Y$, then~$D$ is a degree four elliptic curve and we have $D=C\cup C'$. But this implies that $\mathbb{P}^1\subset \langle C\rangle \cap \langle C'\rangle$, which contradicts Lemma \ref{conic_not_intersect}. Thus we know that $q_j$ is injective.



Now it is left to show that $q_j$ is injective at the level of tangent spaces. By Lemma \ref{tgt_inj}, we only need to prove that \[\Hom(j^*\pr_X(j_*\pr_Y(I_{C/Y})), F)=k,\]
for any non-trivial self-extension $F$ of $\pr_Y(I_{C/Y})$.

\textbf{Case 1: $\tau$-conic.} Let $C$ be a $\tau$-conic. In this case,  $$\pr_Y(I_{C/Y})\cong I_{C/Y},\quad \pr_X(j_*I_{C/Y})\cong \pr_X(I_{C}).$$ 
Then $F$ is a non-trivial self-extension of $I_{C/Y}$ and there are two natural maps $i_1:I_{C/Y}\hookrightarrow F$ and $i_2:F\twoheadrightarrow I_{C/Y}$. Recall that from \cite[Lemma 8.5]{GLZ2021conics}, we have an exact sequence
\[0\to I_{\Sigma}\to \pr_X(I_C)\to I_C\to 0,\]
where $\Sigma$ is the zero locus of a section of $\cU^{\vee}$ containing $C$. It is not hard to check that $j^*I_{\Sigma}\cong I_{D/Y}$, where $D$ is the zero locus of a section of $\cU^{\vee}_Y$. This means that $D$ is a degree four elliptic curve.
Then we have an exact sequence
\begin{equation}
    0\to I_{D/Y}\to j^*\pr_X(j_*I_{C/Y})\xra{s_1} j^*I_{C}\to 0
\end{equation}
and $j^*I_{C}$ fits into 
\[0\to \oh_C(-H)\to j^*I_C\xra{s_2} I_{C/Y}\to 0.\]

Since $\pr_X(j_*I_{C/Y})$ is $\sigma'$-stable, we know that 
$$\Hom(\pr_X(j_*I_{C/Y}), \pr_X(j_*I_{C/Y}))\cong\Hom(j^*\pr_X(j_*I_{C/Y}), I_{C/Y})=k.$$ 
Let $0\neq s_3\in \Hom(j^*\pr_X(j_*I_{C/Y}), I_{C/Y})=k$, then we have $s_3=s_2\circ s_1$. Taking the composition with $i_1$, we get a non-zero map \[s_4:=i_1\circ s_3=i_1\circ s_2\circ s_1:j^*\pr_X(j_*I_{C/Y})\to I_{C/Y}\hookrightarrow F.\]

In the following, we prove our statement by contradiction. If $\Hom(j^*\pr_X(j_*I_{C/Y}), F)\neq~k$, then we apply $\Hom(j^*\pr_X(j_*I_{C/Y}),-)$ to the exact sequence defining $F$, due to the fact that $\Hom(j^*\pr_X(j_*I_{C/Y}), I_{C/Y})=k$, we have $\Hom(j^*\pr_X(j_*I_{C/Y}), F)=k^2$. 

Let $s_5$ be a non-zero element in 
$\Hom(j^*\pr_X(j_*I_{C/Y}), F)=k^2$, linearly independent with $s_4$. Then $s_3=i_2\circ s_5$, since we have the exact sequence by our assumption 
\[0\to \Hom(j^*\pr_X(j_*I_{C/Y}), I_{C/Y})=\langle s_3\rangle\to \Hom(j^*\pr_X(j_*I_{C/Y}), F)=\langle s_4, s_5\rangle\] \[\to \Hom(j^*\pr_X(j_*I_{C/Y}), I_{C/Y})=\langle s_3\rangle\to 0.\]
Therefore, we have a commutative diagram

\[\begin{tikzcd}
	{j^*\pr_X(j_*I_{C/Y})} & {j^*I_C} \\
	F & {I_{C/Y}}
	\arrow["{i_2}"', two heads, from=2-1, to=2-2]
	\arrow["{s_5}"', from=1-1, to=2-1]
	\arrow["{s_1}"', two heads, from=1-1, to=1-2]
	\arrow["{s_2}"', two heads, from=1-2, to=2-2]
\end{tikzcd}\]
In addition, by the property of exact triangles, we obtain a commutative diagram

\[\begin{tikzcd}
	0 & {I_{D/Y}} & {j^*\pr_X(j_*I_{C/Y})} & {j^*I_C} & 0 \\
	0 & {I_{C/Y}} & F & {I_{C/Y}} & 0
	\arrow["{i_2}"', two heads, from=2-3, to=2-4]
	\arrow["{s_5}"', from=1-3, to=2-3]
	\arrow["{s_1}"', two heads, from=1-3, to=1-4]
	\arrow["{s_2}"', two heads, from=1-4, to=2-4]
	\arrow[from=1-4, to=1-5]
	\arrow[from=2-4, to=2-5]
	\arrow[from=1-2, to=1-3]
	\arrow[from=1-1, to=1-2]
	\arrow[from=2-1, to=2-2]
	\arrow["{i_1}"', from=2-2, to=2-3]
	\arrow["t"', from=1-2, to=2-2]
\end{tikzcd}\]
Note that $\ker(s_2)\cong\oh_C(-H)$. We claim that $t$ is a  non-zero map. Indeed, if $t$ is a zero map, then we have  $\mathrm{cok}(t)\cong I_{C/Y}$. Since $\Hom(\ker(s_2), \mathrm{cok}(t))=0$, by snake lemma we have an exact sequence
\[0\to \ker(t)=I_{D/Y}\to \ker(s_5)\to \oh_C(-H)\to 0.\]
This implies that $\ch(\Im(s_5))=\ch(I_{C/Y})$. Since $F$ is torsion free, we know that $\Im(s_5)\cong I_{C'/Y}$ for some conic $C'$ on $Y$. If $C'\neq C$, then from $\Hom(I_{C'/Y}, F)\cong\Hom(I_{C'/Y}, I_{C/Y})=0$, this is impossible. If $C'=C$, since $\Hom(I_{C/Y}, F)=\langle i_1\rangle$ and  $\Hom(j^*\pr_X(j_*I_{C/Y}), I_{C/Y})=\langle s_3\rangle$,  
$s_5$ is proportional to $s_4$, which is also impossible. Thus we know that $t$ is non-zero and must be the natural embedding $I_{D/Y}\hookrightarrow I_{C/Y}$.

Now we have a long exact sequence
\[0\to \ker(s_5)\to \oh_C(-H)\to I_{D/Y}\to \mathrm{cok}(s_5)\to 0.\]
Here $D=C\cup C'$, $I_{D/Y}\cong \oh_{C'}(-H)$ and $\Hom(\oh_C, \oh_{C'})=0$. Then $\ker(s_5)\cong\oh_C(-H)\cong\ker(s_2)$. Taking the quotient of the kernel, we have $G:=\Im(s_5)\subset F$, where $G$ fits into the exact sequence
\[0\to I_{D/Y}\to G\to I_{C/Y}\to 0.\]
From Lemma \ref{ext1=0}, we know that $G\cong I_{D/Y}\oplus I_{C/Y}\subset F$. In fact, this is impossible. On one hand, $i_3: I_{D/Y}\hookrightarrow G\to F$ is the composition of $I_{D/Y}\hookrightarrow j^*\pr_X(j_*I_{C/Y})\twoheadrightarrow G\hookrightarrow F$. On the other hand, from the commutative diagram above, this is just the map $i_1\circ t: I_{D/Y}\xra{t} I_{C/Y}\xra{i_1} F$. Then the image of the composition $I_{D/Y}\xra{\mathrm{id}\oplus 0} I_{D/Y}\oplus I_{C/Y}\hookrightarrow F$ is contained in $I_{C/Y}$. Thus we have $\mathrm{hom}(I_{C/Y},F)\geq2$, which contradicts the fact that $\Hom(I_{C/Y}, F)=k$. 

\textbf{Case 2: $\sigma$-conic.} Let $C$ be a $\sigma$-conic. Recall that we have a triangle
\[\mathbb{D}(I_q(H))[1]\to \pr_X(j_*\pr_Y(I_{C/Y}))\to \cQ^{\vee}.\]
Thus we get a triangle in $\D^b(Y)$
\[\mathbb{D}(I_{C'/Y}(H))[1]\to j^*\pr_X(j_*\pr_Y(I_{C/Y}))\to \cQ_Y^{\vee},\]
where $C'=q\cap Y$ is a $\sigma$-conic on $Y$. 

Recall that we have a triangle \eqref{sigma_conic_triangle_Y}
\[\cU_Y[1]\xra{v_1}\mathrm{pr}_Y(I_{C/Y})\xra{v_2}\cQ^{\vee}_Y.\]
Since $\pr_Y(I_{C/Y})\in \Ku(Y)$, we know that $\RHom(\cQ^{\vee}_Y, \pr_Y(I_{C/Y}))=0$. Therefore, according to \eqref{sigma_conic_triangle_Y}, we have $\Ext^1(\pr_Y(I_{C/Y}), \pr_Y(I_{C/Y}))\cong\Hom(\cU_Y[1], \pr_Y(I_{C/Y})[1])$. Then for any non-trivial map $v_3:\pr_Y(I_{C/Y})\to \pr_Y(I_{C/Y})[1]\in \Ext^1(\pr_Y(I_{C/Y}), \pr_Y(I_{C/Y}))$, the composition 
\[v_3\circ v_1: \cU_Y[1]\xra{v_1} \pr_Y(I_{C/Y})\xra{v_3} \pr_Y(I_{C/Y})[1]\]
is a non-zero element in $\Hom(\cU_Y[1], \pr_Y(I_{C/Y})[1])$. Applying $\Hom(\cU_Y[1],-)$ to the triangle \eqref{sigma_conic_triangle_Y}, we obtain $\Hom(\cU_Y[1], \pr_Y(I_{C/Y})[1])\cong\Hom(\cU_Y[1], \cQ^{\vee}_Y[1])$. Thus the composition
\[v_2[1]\circ v_3\circ v_1: \cU_Y[1]\to \cQ^{\vee}_Y[1]\]
is non-trivial.

Let $F$ be a non-trivial self-extension of $\pr_Y(I_{C/Y})$
\[\pr_Y(I_{C/Y})\to F\to \pr_Y(I_{C/Y}),\]
corresponding to the element $v_3:\pr_Y(I_{C/Y})\to \pr_Y(I_{C/Y})[1]\in \Ext^1(\pr_Y(I_{C/Y}), \pr_Y(I_{C/Y}))$. Taking the long exact sequence of cohomology, we get a long exact sequence
\[0\to \cU_Y\to \cH^{-1}(F)\to \cU_Y\xra{(v_2[1]\circ v_3\circ v_1)[-1]} \cQ^{\vee}_Y\to \cH^0(F)\to \cQ^{\vee}_Y\to 0.\]
Since the map $(v_2[1]\circ v_3\circ v_1)[-1]$ is non-trivial, it is not hard to see that $\cH^{-1}(F)\cong \cU_Y$ and we have an exact sequence
\[0\to I_{C''/Y}\to \cH^0(F)\to \cQ^{\vee}_Y\to 0,\]
where $C''$ is a $\sigma$-conic on $Y$. Since $\RHom(\cQ^{\vee}_Y, \pr_Y(I_{C/Y}))=0$, we know that \[\Hom(j^*\pr_X(j_*\pr_Y(I_{C/Y})), F)\cong\Hom(\mathbb{D}(I_{C'/Y}(H)), F).\]
Note that $\Hom(\mathbb{D}(I_{C'/Y}(H))[1], F)\cong\Hom(\mathbb{D}(F)[1], I_{C'/Y}(H))$ and there is a triangle
\begin{equation} \label{DF_seq}
    \mathbb{D}(\cH^0(F))[1] \to \mathbb{D}(F)[1]\to \cU^{\vee}_Y.
\end{equation}
The cohomology objects of $\mathbb{D}(\cH^0(F))[1]$ are  $\cH^0(\mathbb{D}(\cH^0(F))[1])\cong \oh_{C''}$ and 
\[0\to \cQ_Y\to \cH^{-1}(\mathbb{D}(\cH^0(F))[1])\to \oh_Y\to 0.\]
Then it is not hard to see that $\Hom(\mathbb{D}(\cH^0(F))[1], I_{C'/Y}(H)[i])=0$ for all $i\leq 0$. Applying $\Hom(-, I_{C'/Y}(H))$ to the triangle \eqref{DF_seq}, we obtain $\Hom(\mathbb{D}(F)[1], I_{C'/Y}(H))\cong\Hom(\cU^{\vee}_Y, I_{C'/Y}(H))$. 
Since $C'$ is a $\sigma$-conic, 
then the fact $\Hom(j^*\pr_X(j_*\pr_Y(I_{C/Y})),F)\cong\Hom(\cU^{\vee}_Y, I_{C'/Y}(H))=k$ follows from \cite[Lemma 6.2]{JLLZ2021gushelmukai}.

\textbf{Case 3: $\rho$-conic.} Let $C$ be a $\rho$-conic. Then by Lemma \ref{invo_3fold}, the map induced by $\pr_X\circ j_*$
\[d_1: \Ext^1(I_{C/Y}, I_{C/Y})\to \Ext^1(\pr_X(j_*I_{C/Y}), \pr_X(j_*I_{C/Y}))\]
is naturally isomorphic to the map induced by $T\circ \pr_X \circ j_*\circ \iota_Y$
\[d_2: \Ext^1(I_{C/Y}, I_{C/Y})\to \Ext^1(T(\pr_X(j_*\iota_Y I_{C/Y})), T(\pr_X(j_*\iota_Y I_{C/Y}))).\]
By \cite[Proposition 7.3]{JLLZ2021gushelmukai}, $\iota_Y I_{C/Y}\cong \pr_X(I_{C'/Y})$, where $C'$ is a $\sigma$-conic on $Y$. Combined with the result in Case 2, we know that the map $d_3$ induced by $\pr_X \circ j_*$
\[d_3: \Ext^1(\iota_Y I_{C/Y}, \iota_Y I_{C/Y})\to \Ext^1(\pr_X(j_*\iota_Y I_{C/Y}), \pr_X(j_*\iota_Y I_{C/Y}))\]
is injective. Then the result that $d_1$ is injective simply follows from the fact that $\iota_Y$ and $T$ is an auto-equivalence on $\Ku(Y)$ and $\Ku(X)$ respectively.


Finally, we have established the proof that $q_j$ is injective at the level of tangent spaces. For a general GM threefold $Y$, analogous to the case of cubic threefolds, 
the moduli space $\cM_{\tau'}^Y(1,0)$ is smooth and for any object $[E]$, $\mathrm{Ext}^2(E,E)=0$. Then the result that $\cM_{\tau'}^Y(1,0)$ is a Lagrangian subvariety of $\cM_{\sigma'}^X(1,0)$ follows from Lemma \ref{lag_lem}.
\end{proof}



\begin{corollary}\label{compatible_epw}
The embedding in Theorem~\ref{epw_surface_lag1} is compatible with the one in \cite[Lemma 5.1]{iliev2011fano}, which means that there is a commutative diagram
\[\begin{tikzcd}
	{\widetilde{Y}^{\geq 2}_{A^{\perp}}} & {\mathcal{M}_{\tau'}^Y(1,0)} \\
	{\widetilde{Y}_{A^{\perp}}} & {\mathcal{M}_{\sigma'}^X(1,0)}
	\arrow["{q_j}", hook, from=1-2, to=2-2]
	\arrow["{i''}"', hook, from=1-1, to=2-1]
	\arrow["\cong"', from=2-1, to=2-2]
	\arrow["\cong"', from=1-1, to=1-2]
\end{tikzcd}\]
\end{corollary}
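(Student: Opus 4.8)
The plan is to check commutativity on a dense open subset and then upgrade it to the whole square by separatedness. All four objects in the diagram are reduced finite-type $\mathbb{C}$-schemes, and $q_j$, $i''$, together with the two horizontal isomorphisms, are morphisms; hence it suffices to prove that the two composite maps $\widetilde{Y}^{\geq 2}_{A^\perp}\to\mathcal{M}_{\sigma'}^X(1,0)$ (one through the top and right edges, one through the left and bottom edges) agree on a dense open subset of $\widetilde{Y}^{\geq 2}_{A^\perp}\cong\mathcal{M}_{\tau'}^Y(1,0)$.

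First I would make the two vertical identifications explicit on the relevant conic loci. By Theorem~\ref{mod_215_epwsurface} and its proof (passing through $\mathcal{C}_m(Y)$ and using \cite{Log12, Debarre2021quadrics} and \cite[Theorem 7.13, Corollary 10.5]{JLLZ2021gushelmukai}), the generic point of $\widetilde{Y}^{\geq 2}_{A(Y)^\perp}$ corresponds to a $\tau$-conic $C\subset Y$ and is carried to $[\pr_Y(I_{C/Y})]=[I_{C/Y}]\in\mathcal{M}_{\tau'}^Y(1,0)$. Likewise, by \cite[Theorem 1.1]{GLZ2021conics}, the generic point of $\widetilde{Y}_{A(X)^\perp}$ corresponds to a conic on $X$ and is carried to $[\pr_X(I_C)]\in\mathcal{M}_{\sigma'}^X(1,0)$. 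I would also record that, restricted to these conic loci, the embedding $i''$ of \cite[Proposition 5.1]{iliev2011fano} is induced by the tautological inclusion of Fano schemes of conics, namely it sends $[C\subset Y]$ to $[C\subset X]$.

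Granting this dictionary, the computation on the dense locus $U$ of $\tau$-conics is immediate: the composite ``(top isomorphism) followed by $q_j$'' sends such a $C$ to $[\pr_X(j_*I_{C/Y})]$, whereas ``$i''$ followed by (bottom isomorphism)'' sends it to $[\pr_X(I_C)]$, and these are isomorphic by Proposition~\ref{pr2-pr} (recall $\pr_Y(I_{C/Y})\cong I_{C/Y}$ for a $\tau$-conic). Since $\tau$-conics form a dense open subset of the Fano surface of conics of $Y$, hence a dense open of $\widetilde{Y}^{\geq 2}_{A(Y)^\perp}$, and $\mathcal{M}_{\sigma'}^X(1,0)$ is separated, the two composites agree on all of $\widetilde{Y}^{\geq 2}_{A(Y)^\perp}$, which is precisely the commutativity of the diagram.

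The main obstacle is bookkeeping rather than mathematics: one must carefully trace how a point of each EPW stratum is identified with an honest conic through the various birational contractions, minimal-model maps, and line/period transforms appearing in \cite{JLLZ2021gushelmukai, GLZ2021conics, iliev2011fano}, and confirm that $i''$ is indeed the naive map on the conic locus. Once this is pinned down consistently on both $Y$ and $X$, Proposition~\ref{pr2-pr} supplies the rest formally.
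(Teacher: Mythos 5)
Your argument is essentially the one the paper intends: the paper states this corollary without an explicit proof, the compatibility being exactly the content of Proposition~\ref{pr2-pr} (which gives $\pr_X(j_*\pr_Y(I_{C/Y}))\cong\pr_X(I_C)$ for all conic types, not only $\tau$-conics) combined with the moduli-theoretic identifications of Theorem~\ref{mod_215_epwsurface} and \cite[Theorem 1.1]{GLZ2021conics}. Your density-plus-separatedness reduction to the $\tau$-conic locus is a reasonable way to organize the bookkeeping and is consistent with the paper's reasoning.
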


\subsection{Lagrangian subvarieties of the double EPW sextics}
Let $X$ be a very general ordinary GM fourfold such that the associated Lagrangian data is $A(X)$. By \cite[Theorem 1.1]{GLZ2021conics}, the moduli space $\mathcal{M}_{\sigma'}^X(0,1)\cong\widetilde{Y}_{A}$, which is the double EPW sextic. Let $X'$ be the period dual of~$X$ such that $A(X')=A^{\perp}$. Then there is a morphism  $F_g(X')\rightarrow\widetilde{Y}_{A}$ as in \cite{iliev2011fano}. Let $Y'$ be a general hyperplane section of $X'$. Replacing $X$ by its period dual $X'$ and rewriting the argument in Theorem~\ref{epw_surface_lag1}, it is straightforward to see that the moduli space $\mathcal{M}_{\tau'}^Y(0,1)$ is a Lagrangian subvariety of $\mathcal{M}_{\sigma'}^X(0,1)$.

\section{Lagrangian covering families and Possible examples for Conjecture~\ref{main_conjecture_2}}
\subsection{Lagrangian covering families}
In this section, we provide some examples confirming Conjecture~\ref{Conjecture_Lag_covering} as a corollary of Theorem~\ref{main_theorem_1} and \ref{main_theorem_2}.
In these cases, the Bridgeland moduli spaces on the Kuznetsov components of Fano fourfolds can be covered by the Lagrangian subvarieties constructed as the Bridgeland moduli spaces on the Kuznetsov components of the corresponding  hyperplane sections, i.e., Fano threefolds. 

\begin{corollary}\label{covering_family_examples}
\leavevmode\begin{enumerate}
    \item Let $X$ be a general cubic fourfold and $j:Y\hookrightarrow{X}$ be its hyperplane section. Then the moduli spaces $\mathcal{M}_{\sigma}^X(1,1)$ and $\mathcal{M}_{\sigma}^X(2,1)$ admit a Lagrangian covering family. 
    \item Let $X$ be a very general GM fourfold and $j:Y\hookrightarrow{X}$ be its hyperplane section. Then the moduli spaces $\mathcal{M}^X_{\sigma'}(1,0)$ and $\mathcal{M}^X_{\sigma'}(0,1)$ admit a Lagrangian covering family. 
\end{enumerate}
\end{corollary}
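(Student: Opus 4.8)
The plan is to assemble the Lagrangian covering families from the embeddings of Theorems~\ref{Fano_variety_lines}, \ref{8fold-embed} and \ref{epw_surface_lag1}, letting the base $B$ parametrize the hyperplane sections of $X$. First I would fix $X$ (a general cubic fourfold, resp.\ a very general GM fourfold) and set $B\subset \PP^5$ (resp.\ the appropriate projective space of hyperplane sections) to be the open locus of hyperplanes $H_b$ cutting out a smooth Fano threefold $Y_b=X\cap H_b$; since $X$ is (very) general, the generic hyperplane section is smooth, so $B$ is dense open, and the generic $Y_b$ is itself general in moduli, so that Theorems~\ref{Fano_variety_lines}, \ref{SS_variety_equal_BBF_moduli_space}, \ref{8fold-embed}, \ref{mod_215_epwsurface} and \ref{epw_surface_lag1} apply fiberwise. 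For each of the four cases I would take $\cU\subset \cM\times B$ to be the closure of the locus $\{([F],b) : [F]\in \mathrm{image}\ \text{of}\ \cM^{Y_b}_{\tau}(a,b)\hookrightarrow \cM^X_\sigma(a,b)\}$, where $\cM$ is the relevant fourfold or eightfold $\cM^X_\sigma(1,1)$, $\cM^X_\sigma(2,1)$, $\cM^X_{\sigma'}(1,0)$ or $\cM^X_{\sigma'}(0,1)$.

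\medskip
The key steps, in order, are: (1) construct a relative version of the functor $\pr_X\circ j_*$ over $B$ --- i.e.\ a Fourier--Mukai type functor for the family $\{Y_b\hookrightarrow X\}_{b\in B}$ --- and show it induces a morphism (over $B$) from the relative moduli space $\cM^{\mathcal{Y}/B}_{\tau}(a,b)$ to $\cM^X_\sigma(a,b)\times B$; this uses the existence of relative universal families exactly as in the fiberwise arguments (\cite[Theorem~3.9]{li2018twisted} applied in families) together with the fiberwise stability already established. (2) Define $\cU$ as the image (or its closure) of this relative morphism and check it is a closed subscheme of $\cM\times B$, pure of dimension $\dim(B)+n$: the fiber of $\pi_B$ over a general $b$ is $\cM^{Y_b}_{\tau}(a,b)$, which by the cited theorems has dimension exactly $n=\tfrac12\dim\cM$, so $\dim\cU=\dim B + n$; purity follows because $\cU$ is dominated by the relative moduli space which is flat of the expected relative dimension over the dense open part of $B$. (3) Verify condition (i) of the definition: the general fiber of $\pi_B$ is $\cM^{Y_b}_\tau(a,b)$, which is Lagrangian in $\cM$ by Theorems~\ref{Fano_variety_lines}, \ref{8fold-embed}, \ref{epw_surface_lag1} (and the period-dual variant for $(0,1)$). (4) Verify condition (ii), $\pi_M(\cU)=\cM$: this is the statement that the hyperkähler variety is \emph{covered} by these Lagrangians, i.e.\ that every stable object $F\in\cM^X_\sigma(a,b)$ lies in $\mathrm{image}(\cM^{Y_b}_\tau(a,b))$ for \emph{some} $b$. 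For the Fano variety of lines this says every line in $X$ lies in some smooth hyperplane section, which is clear; for the LLSvS eightfold one uses that every point of $Z$ comes from a twisted cubic, which lies in a hyperplane section; for the EPW cases one invokes the corresponding classical covering statements of O'Grady / \cite{iliev2011fano} matched to the moduli interpretation via Corollaries~\ref{commupatible_Fano_var_of_lines}, \ref{commupatible_LLSvS}, \ref{compatible_epw}.

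\medskip
The main obstacle I expect is step (4), surjectivity of $\pi_M$. Fiberwise injectivity and the Lagrangian property are already in hand, but showing that the union over \emph{all} $b\in B$ of the images sweeps out the entire moduli space requires a genuinely global argument: one must show the incidence variety $\cU$ has the right dimension \emph{and} that $\pi_M$ is dominant, then use properness of $\pi_M$ (both $\cM$ and $\cU$ being projective) to upgrade dominance to surjectivity. The cleanest route is to count dimensions on the classical models --- e.g.\ for $F(X)$, the incidence $\{(l,H): l\subset X\cap H\}$ surjects onto $F(X)$ with positive-dimensional fibers, and likewise for $Z_Y\subset Z$ and the EPW surfaces inside the EPW sextics using the explicit geometric descriptions in \cite{shinder2017geometry} and \cite{iliev2011fano} --- and then transport this through the identifications of Corollaries~\ref{commupatible_Fano_var_of_lines}, \ref{commupatible_LLSvS} and \ref{compatible_epw}. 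A secondary technical point is ensuring the relative moduli space and relative projection functor are well-behaved over all of $B$ (not just a single fiber), which should follow from standard base-change and flatness arguments but needs to be stated carefully.
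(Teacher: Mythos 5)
Your proposal is correct and follows essentially the same route as the paper: the paper's Lemma~\ref{covering_lemma} builds exactly your incidence variety over the dual projective space of hyperplane sections, identifies its general fibre with the Lagrangian image of $\cM^{Y}_{\tau}(a,b)$ via Theorems~\ref{main_theorem_1} and~\ref{main_theorem_2}, and deduces surjectivity of $\pi_M$ from the fact that a general curve lies in a smooth hyperplane section together with properness, just as in your step (4). The only (simplifying) difference is that the paper sidesteps your step (1) --- the relative Fourier--Mukai functor and relative moduli space --- by defining the incidence variety inside $\mathbb{P}^{\vee}\times M(X)$ with $M(X)$ the relevant Hilbert scheme component and pushing it forward along the already-constructed surjection $M(X)\twoheadrightarrow \cM^{X}_{\sigma}(a,b)$, so no base-change or flatness argument for relative moduli is needed.
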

\begin{proof}
In all cases, for a general rational curve on $X$, it belongs to a hyperplane section. Then the results follow from Lemma~\ref{covering_lemma}, Theorem~\ref{main_theorem_1} and \ref{main_theorem_2}.
\end{proof}

\begin{lemma}\label{covering_lemma}
Let $X$ be a cubic fourfold or a GM fourfold and $j:Y\hookrightarrow X$ be any smooth hyperplane section. Let $M(X)$ be a component of the Hilbert scheme $\mathrm{Hilb}^{dt+1}(X)$ and $M(Y)$ be its restriction on $Y$. Assume that
\begin{enumerate}

\item there is a commutative diagram 
\[\begin{tikzcd}
	{M(Y)} & {\cM^Y_{\tau}(a, b)} & {} \\
	{M(X)} & {\cM^{X}_{\sigma}(a,b)}
	\arrow["{}", hook, from=1-2, to=2-2]
	\arrow["{}", two heads, from=2-1, to=2-2]
	\arrow["{}", hook, from=1-1, to=2-1]
	\arrow["{}", two heads, from=1-1, to=1-2]
\end{tikzcd}\]
where the first column is the natural embedding $M(Y)\hookrightarrow M(X)$ and the second column is a Lagrangian embedding 
\[\cM^Y_{\tau}(a, b)\hookrightarrow \cM^{X}_{\sigma}(a,b)\]
induced by the functor $\pr_X\circ j_*$.
At the same time, the row in the diagram is a surjective morphism induced by the functors $\pr_Y$ and $\pr_X$ respectively.

\item A general curve $C\in M(X)$ lies on a smooth hyperplane section.
\end{enumerate}

Then the moduli space $\cM^{X}_{\sigma}(a,b)$ admits a Lagrangian covering family.

\end{lemma}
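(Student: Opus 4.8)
The plan is to construct the Lagrangian covering family directly from the varieties appearing in the hypothesis, using the universal family of hyperplane sections as the base $B$. First I would let $B \subset (\PP^5)^\vee$ (respectively the relevant linear system for GM fourfolds) be the open locus parametrizing smooth hyperplane sections $Y_b = X \cap H_b$, and form the universal hyperplane section $\cY \to B$. Applying the relative version of the embedding $\cM^{Y}_{\tau}(a,b) \hookrightarrow \cM^{X}_{\sigma}(a,b)$ from hypothesis (1) fibrewise — which is legitimate since the functor $\pr_X \circ j_*$ is of Fourier--Mukai type and the moduli spaces $\cM^{Y_b}_{\tau}(a,b)$ admit universal families in the cases at hand — produces a family over $B$ whose fibre over $b$ is the Lagrangian $L_b := \cM^{Y_b}_{\tau}(a,b) \subset \cM^{X}_{\sigma}(a,b) =: M$. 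I would package this as the incidence scheme
\[
\cU := \{ (m, b) \in M \times B \st m \in L_b \},
\]
with $\pi_M, \pi_B$ the two projections.

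Next I would check the two defining conditions of a Lagrangian covering family. For the dimension and purity: each fibre $L_b$ has dimension $n = \tfrac12 \dim M$ (this is exactly the numerical relation $\dim \cM^Y_\tau(a,b) = \tfrac12 \dim \cM^X_\sigma(a,b)$ recorded before Example~\ref{motivation_example}, together with Theorems~\ref{Fano_variety_lines}, \ref{8fold-embed}, \ref{epw_surface_lag1}), so $\cU$ has pure dimension $\dim B + n$ over the open locus where everything is well-behaved; I would take $\cU$ to be the closure of this locus to get a genuine closed subscheme of $M \times B$ of the right pure dimension. Condition (1), that the general fibre of $\pi_B$ is Lagrangian, is immediate from the cited theorems. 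The substantive point is condition (2): $\pi_M(\cU) = M$, i.e. the Lagrangians $L_b$ sweep out all of $M$.

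For surjectivity of $\pi_M$ I would use hypothesis (2) together with the commutative square in hypothesis (1). A general point of $M = \cM^X_\sigma(a,b)$ is the image under the surjection $M(X) \twoheadrightarrow \cM^X_\sigma(a,b)$ of a general curve $C \in M(X)$ (this surjectivity is the content of \cite[Theorem 3.9]{li2018twisted}-type arguments invoked for each case: lines, twisted cubics, conics). By hypothesis (2), such a general $C$ lies on some smooth hyperplane section $Y_b$, hence $C \in M(Y_b)$, and by the commutativity of the square its image in $\cM^X_\sigma(a,b)$ lies in $L_b = \cM^{Y_b}_\tau(a,b)$. Thus the general point of $M$ lies on some $L_b$, so $\pi_M(\cU)$ contains a dense subset of $M$; since $\pi_M$ is proper ($B$ and the fibres being proper), $\pi_M(\cU)$ is closed, hence equals $M$.

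The main obstacle I expect is the first step: promoting the fibrewise embedding to an honest family over $B$, i.e. constructing $\cU$ as a genuine subscheme rather than just a set-theoretic union of the $L_b$. This requires a relative universal object for $\cY/B$ and the relative projection functor $\pr_{\cX/B} \circ j_{*}$, and control of the locus in $B$ over which $\cM^{Y_b}_\tau(a,b)$ stays smooth of the expected dimension with a universal family (for GM threefolds this is delicate, as the text itself warns that $\cM^Y_{\tau'}(a,b)$ need not be smooth of expected dimension in general). In practice one restricts $B$ to the open locus of \emph{general} hyperplane sections, which still dominates $X$ by hypothesis (2), and this suffices; but making the relative construction and the properness of $\pi_M$ precise is where the real work lies. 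The verification of conditions (1) and (2) themselves is then essentially formal given the earlier theorems.
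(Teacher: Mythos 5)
Your proposal is correct and follows essentially the same route as the paper: an incidence family over the dual projective space $\mathbb{P}^{\vee}$ whose fibres are the Lagrangians $\cM^{Y_b}_{\tau}(a,b)$, with surjectivity of $\pi_M$ deduced from hypothesis (2), the commutative square, and properness. The one point worth noting is that the obstacle you single out as ``where the real work lies'' --- promoting the fibrewise embeddings to a genuine closed subscheme $\cU\subset M\times B$ --- is dissolved by the paper's slightly different construction: it defines the incidence variety upstairs, $\cU:=\{(H,C)\st C\subset X_H\}\subset \mathbb{P}^{\vee}\times M(X)$, which is manifestly a closed subscheme of a projective variety, and then takes its image under $\mathrm{id}\times\pi_X:\mathbb{P}^{\vee}\times M(X)\to\mathbb{P}^{\vee}\times\cM^X_{\sigma}(a,b)$, where $\pi_X$ is the morphism from hypothesis (1). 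The image of a projective variety under a morphism is automatically closed, so no relative moduli-theoretic construction or restriction of the base is needed; the fibre over a smooth hyperplane section is identified with $\cM^{Y}_{\tau}(a,b)$ via the commutative diagram, and only the \emph{general} fibre needs to be Lagrangian for the definition.
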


\begin{proof}

Let $\mathbb{P}^{\vee}$ be the dual projective space, parametrizing the hyperplane sections of $X$. We define a subvariety
\[\cU:=\{(H, C)|H\in \mathbb{P}^{\vee}, C\subset X_H\}\subset \mathbb{P}^{\vee}\times M(X).\]
Let $\cU(a,b)$ be the image of $\cU$ under the morphism $\mathrm{id}\times \pi_X: \mathbb{P}^{\vee}\times M(X)\to \mathbb{P}^{\vee}\times \cM^{X}_{\sigma}(a,b)$. By the commutative diagram as in (1), the fibre of $\pr_1:\cU(a,b)\to \mathbb{P}^{\vee}$ over a smooth hyperplane section $Y$ is a Lagrangian subvariety $\cM^{Y}_{\tau}(a, b)$ of $\cM^{X}_{\sigma}(a,b)$.

Thus to prove $\cU(a,b)$ is a Lagrangian covering family of $\cM^{X}_{\sigma}(a,b)$, we only need to show that the morphism $\pr_2:\cU(a,b) \to \cM^{X}_{\sigma}(a,b)$ is surjective. Since both varieties are projective, we only need to show that $\pr_2$ is dominant. This follows from $(2)$ and the commutative diagram in (1).
\end{proof}

\begin{remark}
\label{voisin_map}
In a private communication with O'Grady, he suggests that one should be able to construct a Lagrangian covering family of the LLSvS eightfold via the Voisin map constructed in \cite{Voi16}
$$\nu: F(X)\times F(X)\dashrightarrow Z.$$ 
Indeed, considering the induced map $\nu_Y$ on a hyperplane section $Y$, $$\nu_Y:F(Y)\times F(Y)\dashrightarrow Z,$$
the closure of the image is exactly the Lagrangian subvariety $Z_Y$ of $Z$. On the other hand, the map $\nu_Y$ has a moduli theoretic interpretation. The Fano variety of lines $F(Y)$ is isomorphic to the moduli space $\mathcal{M}_{\tau}(\Ku(Y),\lambda_1+\lambda_2)$. The latter is also isomorphic to $\mathcal{M}_{\tau}(\Ku(Y),\lambda_1)$ through the  rotation functor $\bR:=\bL_{\oh_Y}(-\otimes\oh_Y(H))[1]$ because the functor preserves the stability condition as in \cite[Proposition 5.4]{PY20}. By Lemma~\ref{SS_variety_equal_BBF_moduli_space}, $Z_Y\cong\mathcal{M}_{\tau}(\Ku(Y),2\lambda_1+\lambda_2)$. Thus  the map $\nu_Y$ can be realized as families of extensions (cf. \cite{chen21}). It is worth mentioning that in \cite{voisin2018triangle}, the author uses the map $\nu$ to show that the LLSvS eightfold $Z$ admits a surface decomposition (cf. \cite[Definition 0.2]{voisin2018triangle}). Then by \cite[Proposition 0.11]{voisin2018triangle}, the LLSvS eightfold admits mobile algebraically coisotropic subvarieties of any codimension $n\leq 4$. 
\end{remark}


Now we present some possible examples confirming Conjecture~\ref{main_conjecture_2} and give a comment on moduli spaces of a non-primitive class. We study these examples in the sequel \cite{FLZlag2022} of the current paper.

\subsection{Lagrangian subvarieties of the double EPW cube}
Let $X$ be a very general GM fourfold. In a very recent paper \cite{kapustka2022epw}, the authors show that the moduli space $\cM_{\sigma'}^X(1,-1)$ is isomorphic to the double EPW cube $\widetilde{C}_A$ studied in \cite{IKKR19}. Let $Y$ be a general hyperplane section of $X$, which is a GM threefold. By a similar argument as in \cite[Proposition 6.2]{zhang2020bridgeland}, one could show that the Bridgeland moduli space $\mathcal{M}_{\tau'}^Y(1,-1)$ is a smooth threefold constructed as a divisorial contraction of the Hilbert scheme of twisted cubics $M_3(Y)$. This suggests that the moduli space $\cM_{\sigma'}^X(1,-1)$ may be the MRC quotient of $M_3(X)$. We expect that  $\mathcal{M}_{\tau'}^Y(1,-1)$ can be realized as a Lagrangian subvariety of $\mathcal{M}_{\tau'}^Y(1,-1)$ via the functor $\pr_X\circ j_*$. 

\subsection{Lagrangian subvarieties of a twelve-dimensional hyperkähler variety}
Let $X$ be a GM fourfold, denote by $k_x$ the skyscraper sheaf of a point $x\in X$. The character of $\mathrm{pr}_X(k_x)$ is $2\Lambda'_2+\Lambda'_1$. By \cite[Theorem 1.5]{perry2019stability}, the moduli space $\cM_{\sigma'}^X(1,2)$ is a smooth projective hyperkähler variety of dimension $12$. On the other hand, we define $\pr':=\bL_{\cU_Y}\bL_{\oh_Y}$ with respect to the semi-orthogonal decomposition 
$D^b(Y)=\langle\Ku(Y)',\cU_Y,\oh_Y\rangle$. The moduli space of stable objects related to a skyscraper sheaf $[\mathrm{pr}'(k_y)]$ is studied in \cite{jLz2021brillnoether}, which is a six dimensional smooth projective variety. It is easy to check that the character of  $\mathrm{ch}(\mathrm{pr}_X(\mathbb{D}(j_*v)))$ is  $\Lambda'_1+2\Lambda'_2$.
We expect that this moduli space over $\Ku(Y)'$ is a Lagrangian subvariety of  $\cM_{\sigma'}^X(1,2)$.


\subsection{Lagrangian subvarieties of O'Grady 10}

Although we only consider moduli spaces for primitive classes in Conjecture \ref{main_conjecture_1} and \ref{main_conjecture_2}, we expect that analogous results also hold for non-primitive classes.

Let $X$ be a cubic fourfold, consider the moduli space $\cM^{X, ss}_{\sigma}(2,2)$ of S-equivalence classes of semistable objects with character $2\Lambda_1+2\Lambda_2$. According to the computation in \cite{li2020elliptic}, it is not hard to see that $\pr_X\circ j_*$ induces an embedding $M^{inst}(Y)\hookrightarrow \cM^{X}_{\sigma}(2,2)$, where $M^{inst}(Y)$ is the moduli space of stable instanton sheaves. Furthermore, the moduli space $M^{inst}(Y)$ is isomorphic to $\cM^Y_{\tau}(2,2)$ (cf. \cite[Theorem 7.6]{liu2021note}). Although $\cM^{X,ss}_{\sigma}(2,2)$ is singular along the strictly semistable locus, it admits a symplectic resolution $\widetilde{M}$ (cf. \cite[Theorem 1.1]{li2020elliptic}). Moreover, the closure of $M^{inst}(Y)$ in $\widetilde{M}$ is a Lagrangian subvariety and appears as a fibre of the Lagrangian fibration of $\widetilde{M}$.

\bibliography{lagrangian}

\bibliographystyle{alpha}

\end{document}